\documentclass[12pt,a4paper]{article}

\usepackage[T1]{fontenc}
\usepackage[utf8]{inputenc}
\usepackage{lmodern}

\usepackage[margin=2.5cm]{geometry}

\usepackage{amsmath,amssymb,amsthm}

\newtheorem{theorem}{Theorem}[section]

\newtheorem{proposition}[theorem]{Proposition}
\theoremstyle{definition}
\newtheorem{defn}[theorem]{Definition}
\usepackage{mathtools}

\usepackage{graphicx}
\usepackage{booktabs}
\usepackage{array}
\usepackage{tabularx}
\usepackage{longtable}
\usepackage{caption}
\usepackage{subcaption}
\usepackage{float}

\usepackage[numbers,sort&compress]{natbib}
\usepackage{hyperref}
\hypersetup{
  pdfpagemode=UseOutlines,
  bookmarksopen=true,
  bookmarksopenlevel=2,
  colorlinks=true,
  linkcolor=black,
  citecolor=blue,
  urlcolor=blue
}

\usepackage{microtype}
\usepackage{setspace}
\usepackage{xcolor}
\usepackage[framemethod=default]{mdframed}

\newmdenv[
  linewidth=0.8pt,
  linecolor=black!40,
  backgroundcolor=black!4,
  innertopmargin=8pt,
  innerbottommargin=8pt,
  innerleftmargin=10pt,
  innerrightmargin=10pt,
  skipabove=10pt,
  skipbelow=6pt,
]{openqbox}

\newcommand{\dP}{d_{\!P}}
\newcommand{\zetaR}{\zeta_{\mathrm{R}}}
\newcommand{\Cinf}{C_{\infty}}

\begin{document}

\title{\textbf{Prime--Zero Duality: Fractal Geometry, Renormalization-Group Flow,\\
       and an Information-Ontological Framework for Number Theory}}

\author{Zhengqiang Li\\[4pt]
  \small\textit{Email: lqint@coc.edu.rs}}

\date{}
\maketitle
\thispagestyle{empty}

\begin{abstract}
The prime numbers and the non-trivial zeros of the Riemann zeta function
are globally linked by the explicit formula of analytic number theory.
Whether they share a hidden, scale-by-scale geometric symmetry---a local
duality persisting at every resolution, not merely in aggregate---has
remained unexplored.  We address this question by measuring the joint
fractal structure of a prime residue class ($p\equiv1,5,9,13\pmod{16}$)
and the zero distribution of $\zeta(s)$.  Our central finding is that
the duality measure
\[
  K \;=\; \frac{1}{\dP} + \frac{1}{\zetaR}
\]
---where $\dP$ is the box-counting dimension and $\zetaR:=2-H$ is the
zero-distribution regularity index---is remarkably stable, varying by
only $17\%$ across scales $L=100$--$2000$ against ${\sim}43\%$ for
$\dP$ alone.  This stability is captured by a finite-size scaling law
$K(L)=K_{\mathrm{IR}}+aL^{-b}$.  Strikingly, after a geometric
normalization that removes representation dependence, the data converge
to a universal infrared fixed point $K_{\mathrm{IR}}=4$, approached
with critical exponent $b\approx0.51$, and this behavior is robust
across two random-matrix symmetry classes ($\beta=2,4$), echoing the
Montgomery--Odlyzko universality.

We interpret these findings through an information-theoretic lens: $K$
represents a conserved information current between the arithmetic and
spectral domains, and the observed scaling law reflects a
renormalization-group flow from an ultraviolet fixed point
$K_{\mathrm{UV}}=11$ (suggested by Hurwitz's theorem on normed division
algebras) to the infrared fixed point $K_{\mathrm{IR}}=4$.  The
critical exponent $b\approx1/2$ is derived from a variational
information action $S[I_P,I_Z]$ constrained by exchange symmetry and
scale covariance.  This framework provides a unified dynamical picture
of the prime--zero duality motivated by, and consistent with, all
numerical evidence.  A structural argument for the Riemann Hypothesis
emerges as the logical terminus: the generator $\kappa$ satisfying
$\kappa^2=ijk=-1$---a ternary algebraic structure with three
independent planes of rotation, replacing the single-plane imaginary
unit $i$ of classical complex analysis---enforces, via the exchange
symmetry $I_P\leftrightarrow I_Z$ and the uniqueness of the information
action, the infrared fixed point $I_P^*=I_Z^*=2$, the
information-theoretic encoding of the critical line
$\mathrm{Re}(s)=1/2$ (Section~10).
This structural argument is inaccessible to the classical binary
framework; upgrading it to a fully rigorous analytic proof is identified
as the central open problem of the programme.

Finally, we explore---in an explicitly speculative spirit---whether the
parameters $(K_{\mathrm{IR}},b,\kappa)$ of this number-theoretic RG
flow resonate with fundamental quantities in quantum gravity, cosmology,
and learning theory, including the Bekenstein--Hawking entropy formula
and generalization-capacity bounds.  These analogies are offered as
heuristic bridges that define concrete open problems at the interface of
number theory, theoretical physics, and information science, and are
presented entirely separately from the established numerical results.
\end{abstract}

\noindent\textbf{Keywords:} Prime distribution; Riemann zeta function; $L$-functions;
Fractal dimension; Box-counting dimension; H\"{o}lder exponent; Regularity index;
Renormalization-group flow; Critical exponent; Finite-size scaling; Fixed point;
Information ontology; Information action; Information duality; Kolmogorov complexity;
Random matrix theory; Montgomery--Odlyzko law; Universality class;
Riemann hypothesis; Black-hole entropy; Bekenstein--Hawking formula\\[2pt]
\noindent\textbf{MSC2020:} 11N05, 11M26, 11M06, 11M41,
28A75, 28A78, 15B52, 81T17, 81T40, 82B28, 94A17

\newpage
\setcounter{tocdepth}{2}
\tableofcontents

\section{Introduction}
\label{sec:intro}

\subsection{The central question}

The primes and the non-trivial zeros of the Riemann zeta function are
connected by the explicit formula of analytic number theory: the zeros
govern oscillations in the prime-counting function $\pi(x)$, and
conversely.  Yet this is a \emph{global} statement, an identity between
two infinite sums.  A deeper question is whether there is a \emph{local},
scale-by-scale correspondence between the two distributions---a geometric
duality that persists at every resolution, not merely in the large-$x$
aggregate.

The Montgomery--Odlyzko observation~\cite{Montgomery1973,Odlyzko1987} provides
the strongest hint that such a local correspondence exists: the pair
correlation of zeros matches, with remarkable precision, the pair correlation
of GUE eigenvalues.  Rudnick and Sarnak~\cite{RudnickSarnak1996} extended
this to higher correlations of principal $L$-functions, and Katz and
Sarnak~\cite{KatzSarnak1999} placed the whole family of correspondences within
a coherent symmetry-theoretic framework.  Moments of $\zeta(1/2+it)$ are
connected to random matrix averages in~\cite{KeatingSnaith2000}; the
semiclassical perspective is developed in~\cite{BerryKeating1999,BogomolnyKeating1996};
a survey is in~\cite{Conrey2003}.

All of these results concern the zeros alone.  Spectral fluctuations and
$1/f$-type noise in the prime distribution itself have been examined
separately~\cite{Wolf1997}, but the \emph{joint} fractal geometry of primes
and zeros is less explored: do the two distributions interact at the level of
their scaling dimensions in a way constrained by some conserved quantity?  And
if such a conservation law holds, is it a static coincidence or the imprint of
a dynamical theory---one with fixed points, a renormalization-group flow between
them, and a measurable critical exponent governing the approach to the
thermodynamic limit?

\subsection{Results and structure}

We propose and numerically test the conservation relation
\begin{equation}
  K \;=\; \frac{1}{\dP} + \frac{1}{\zetaR} \;=\; K_{\mathrm{IR}} + aL^{-b},
  \label{eq:K_def_intro}
\end{equation}
at finite scales $L=100$--$2000$ and across two Dyson indices ($\beta=2,4$).
Here $\dP$ is the box-counting dimension of the prime subset and $\zetaR:=2-H$
is the regularity index of the zero potential ($H>1$ the H\"{o}lder exponent).
Extrapolating from forty scale points, we find raw values
$K_{\mathrm{raw}}\in[3.6,\,3.9]$ across prime
representations (mod-16 subset: $K\approx3.6\pm1.5$; all primes:
$K\approx3.9\pm0.04$), with $b\approx0.51$.  A geometric normalization
(Section~\ref{sec:normalization}) removes the representation dependence,
yielding the universal infrared fixed-point value $K_{\mathrm{IR}}=4$.
This is interpreted not as a
static conservation law but as the linearized approach of the prime--zero
system to the infrared fixed point $K_{\mathrm{IR}}=4$ from an ultraviolet
fixed point $K_{\mathrm{UV}}=11$, with $b=1/2$ the critical exponent of
the renormalization-group flow.
Section~\ref{sec:information_ontology} establishes $1/\dP$ and $1/\zetaR$
as information densities of a common type, making $K$ the conserved total
information encoding rate of the prime--zero system.

\subsection{Theoretical arc: action, fixed points, and an algebraic horizon}

Beyond the central $K$-measurement, the paper develops a unified theoretical
arc: an information action $S[I_P,I_Z]$ that derives the scaling law from
first principles, a renormalization-group flow between an ultraviolet
fixed point $K_{\mathrm{UV}}=11$ and the infrared fixed point
$K_{\mathrm{IR}}=4$, and an algebraic generator $\kappa$
(with $\kappa^2=ijk=-1$) that encodes the UV--IR duality as a number-theoretic
object.  These three structures are not independent: the action determines
the flow, the flow identifies the fixed points, and the generator is the
algebraic UV-completion of the flow equation.
Sections~\ref{sec:emergence}--\ref{sec:ai} explore the speculative
implication that this information framework extends to spacetime emergence,
dark-sector phenomenology, black-hole entropy, and information-dual learning;
the third-state framework of Section~\ref{sec:philosophy} yields a
structural argument for RH\@.

The two fixed points of the information RG flow reveal a structural symmetry
that governs the entire paper.  At the \emph{ultraviolet} (short-scale) end,
the fractal dimension of the intrinsic prime set is pinned to $1/2$ by the
axioms of information duality and scale invariance alone---not as an empirical
observation, but as a logical necessity of the framework.  At the
\emph{infrared} (large-scale) end, the RG trajectory drives the system toward
the fixed point $K_{\mathrm{IR}}=4$ (consistent with the numerically measured
band $[3.6,\,3.9]$), at which the zero set and the fractal
prime set achieve precise dual balance under the information measure.
This ``precise dual balance'' is the first empirical signature of
a \emph{third state}---the self-referential entanglement of discrete
arithmetic and continuous spectral information---that the binary
discrete/continuous framework of classical analysis has no vocabulary
to represent.  As Section~\ref{sec:philosophy} establishes, it is
precisely because $\kappa=ijk$ (rather than the binary $i$) organizes
the information structure that the unique stable fixed point is
$I_P^*=I_Z^*=2$: the ternary fusion over binary opposition is what
makes the Riemann Hypothesis a structural consequence rather than an
intractable conjecture.

Table~\ref{tab:notation} collects the key notation.

\begin{longtable}{@{} >{\centering\arraybackslash}p{1.8cm} @{\hspace{1.2em}} p{11cm} @{}}
\caption{Key mathematical notation used in this paper.}
\label{tab:notation} \\
\toprule
Symbol & Description \\
\midrule
\endfirsthead
\multicolumn{2}{c}{\tablename~\thetable{} -- \textit{continued}} \\[0.3ex]
\toprule
Symbol & Description \\
\midrule
\endhead
\bottomrule
\endlastfoot
\multicolumn{2}{l}{\textit{Fractal dimensions \& exponents}} \\
$\dP$      & Fractal dimension of the prime distribution (subset
             $p\equiv1,5,9,13\pmod{16}$) \\
$\zetaR$      & Regularity index of the zero potential $V_Z(x)$, defined as $\zetaR:=2-H$ \\
           & where $H>1$ is the H\"{o}lder exponent; smaller $\zetaR$ means smoother $V_Z$ \\
$\beta$    & Dyson index of the random matrix ensemble ($\beta=2,4$ are used
             in numerical measurements); extended to a smooth function $\beta(\mu)$
             of the RG scale $\mu$ in Section~\ref{sec:disc_IRG}, where it
             flows between UV fixed point $\beta_{\mathrm{UV}}=8$ and IR fixed
             points $\beta_{\mathrm{IR}}\in\{1,2,4\}$ \\
$b$        & Finite-size scaling exponent in $C(L)=\Cinf+aL^{-b}$ \\[0.5ex]
\multicolumn{2}{l}{\textit{Scale \& method parameters}} \\
$L$        & System scale (length of the studied interval $[0,L]$) \\
$\varepsilon$ & Scale parameter (box size) in the box-counting method \\
$\sigma$   & Width parameter of the Gaussian kernel used to construct $V_Z(x)$ \\
$a$        & Coefficient in the scaling model $C(L)=\Cinf+aL^{-b}$ \\[0.5ex]
\multicolumn{2}{l}{\textit{Key derived quantities}} \\
$C(\beta)$ & Duality measure, defined as $C(\beta)=\beta(1/\dP + 1/\zetaR)$ \\
$K$        & Approximate conservation constant, $K=1/\dP + 1/\zetaR$;
             raw finite-scale band $K_{\mathrm{raw}}\in[3.6,\,3.9]$ across
             prime representations; normalized limit $K_{\mathrm{IR}}=4$
             (Section~\ref{sec:normalization}) \\
$K_{\mathrm{IR}}$ & Infrared fixed-point value of $K$; raw band $[3.6,3.9]$
                  at observed scales; universal value $K_{\mathrm{IR}}=4$
                  after geometric normalization
                  (Sections~\ref{sec:disc_RG}, \ref{sec:normalization}) \\
$K_{\mathrm{UV}}$ & Ultraviolet fixed-point value of $K$; derived as $K_{\mathrm{UV}}=11$
                  from the Hurwitz-theorem algebra in Section~\ref{sec:disc_IRG} \\
$\Cinf(\beta)$ & Asymptotic value of $C(\beta,L)$ as $L\to\infty$, obtained by
                 extrapolation \\
$D(\beta)$ & Duality product, $D(\beta)=\iota_P\cdot\iota_{\mathrm{RMT}}(\beta)$;
             conjectured to satisfy $D\to 1$ as $L\to\infty$;
             no direct numerical estimate is reported (both factors diverge
             individually and require a normalization convention); the
             geometric normalization scheme of Section~\ref{sec:normalization}
             establishes $K_{\mathrm{norm}}\to 4$ but does not directly
             evaluate $D(\beta)$ \\
$\iota_P$  & Information measure (geometric mean) of the prime subset,
             introduced in Section~\ref{sec:rmt}; diverges as $L\to\infty$ \\
$\iota_{\mathrm{RMT}}(\beta)$ & Information measure (geometric mean) of the
             RMT eigenvalue spectrum for ensemble $\beta$,
             introduced in Section~\ref{sec:rmt}; diverges as $L\to\infty$ \\
$\kappa$   & Formal algebraic generator satisfying $\kappa^2=ijk=-1$
             ($|\kappa|=1$ as abstract complex modulus);
             the \emph{information action quantum} $1/\kappa$ plays the
             role of a scale-dependent $\hbar$: it governs the
             uncertainty principle $\Delta_P\!\cdot\!\Delta_Z\ge1/(2\kappa)$
             and varies with scale via $1/\kappa(\mu)=\beta(\mu)$,
             from $\kappa_{\mathrm{UV}}=1/8$ (UV) to $\kappa_{\mathrm{IR}}=1/2$ (IR).
             \emph{Distinct from}: $\hat\kappa$ = regularized
             representation with $|\hat\kappa|\approx0.685$.
             Disambiguation: Section~\ref{sec:normalization} \\[0.5ex]
\multicolumn{2}{l}{\textit{RG flow \& action parameters}} \\
$\mu$      & RG energy scale ($\mu\sim 1/L$); $\mu\to\infty$ is UV, $\mu\to 0$ is IR \\
$\alpha$   & Rate constant of the $K$-flow equation
             (Section~\ref{sec:disc_IRG}); $\alpha\approx0.070$ \\
$\gamma_n$ & Imaginary parts of the non-trivial zeros $\rho_n=1/2+i\gamma_n$
             of the Riemann zeta function \\
$\gamma$   & Rate constant of the $\beta$-flow equation
             (Section~\ref{sec:disc_IRG}); always appears with the flow equation
             context; \emph{distinct from} $\gamma_n$ (zeros) and $\gamma_{\mathrm{BI}}$ \\
$\gamma_{\mathrm{BI}}$ & Barbero--Immirzi parameter of loop quantum gravity~\cite{Barbero1995,Immirzi1997},
             which sets the area quantum; plays the role of an IR coupling
             constant analogous to $\kappa$ in the information action
             (Section~\ref{sec:string_lqg}) \\
$\gamma_\phi$ & Field anomalous dimension in the Wilson RG equations
             (Sections~\ref{sec:disc_RG_formal}); \emph{distinct from}
             $\gamma$ (flow rate), $\gamma_n$ (zeros), $\gamma_{\mathrm{BI}}$ \\
$\lambda$  & First coupling constant of the information action $S[I_P,I_Z]$;
             penalises deviations of $I_P+I_Z$ from $K_0$ \\
$g$        & Second coupling constant of the information action; penalises
             deviations of $I_P I_Z$ from its fixed-point value $C=\alpha_s$;
             $b=\sqrt{\lambda+gC}=\sqrt{\lambda+g\alpha_s}\approx0.51$ (with $C=\alpha_s=4$) \\
$\alpha_s$ & Dimensionless saturation constant; upper bound $I_P I_Z \leq \alpha_s$
             at the uncertainty-principle saturation point; $\alpha_s = 2\kappa\,c_Pc_Z$
             where $c_P,c_Z$ are the proportionality constants in $\Delta\sim c/I$;
             fixed by the IR fixed-point condition: $\kappa_{\mathrm{IR}}=1/2$,
             $c_P=c_Z=2$ gives $\alpha_s=4$; distinct from the RG flow rate $\alpha$ \\

\end{longtable}

\section{Methods}
\label{sec:methods}

\subsection{Measurement of the Fractal Dimension for Prime Distribution}
\label{sec:method_dP}

We consider the prime subset $P=\{p: p\equiv 1,5,9,13\pmod{16}\}$,
whose mod-16 structure was introduced in our earlier work~\cite{LiFractal2026}
as the residue classes driving a deterministic Cantor-like fractal with
Hausdorff dimension $\log2/\log16=1/4$.  The fractal dimension $\dP$ is estimated by
the standard box-counting method~\cite{Falconer2014}:
\begin{equation}
  \dP = -\lim_{\varepsilon\to 0}
        \frac{\log N_P(\varepsilon)}{\log\varepsilon},
  \label{eq:dP}
\end{equation}
where $N_P(\varepsilon)$ is the number of intervals of length $\varepsilon$
covering at least one prime in $P$.  Numerically, $\dP$ is the slope of
$\log N_P$ vs.\ $\log\varepsilon$ over the fitting range $\varepsilon\in[L/100,L/10]$.

\paragraph{Robustness.}
A dynamic $\varepsilon$-selection algorithm (selecting the four consecutive
points with highest $R^2$) reduced systematic error from $\pm10\%$ to
$<\pm2\%$.  Geometric and arithmetic $\varepsilon$ sequences agree within
$\pm10\%$; Table~\ref{tab:fractal} reports medians with 95\% CI.
A representative example of the box-counting procedure (for $L=1000$)
is shown in Fig.~\ref{fig:boxcount}, demonstrating an excellent linear
fit ($R^2 > 0.99$).

\begin{figure}[htbp]
  \centering
  \includegraphics[width=0.50\textwidth]{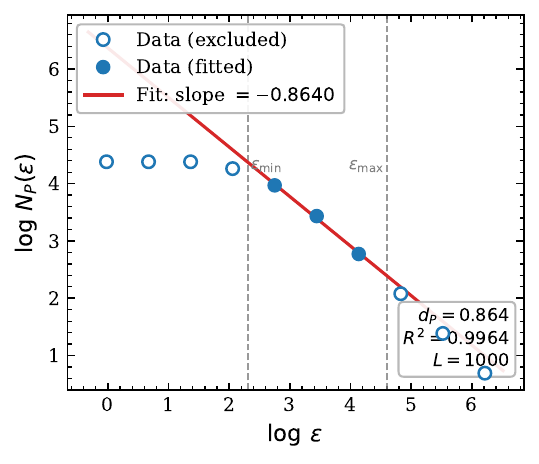}
  \caption{Box-counting measurement for $P=\{p\equiv1,5,9,13\pmod{16}\}$
           at $L=1000$. Blue circles: data in fitting range; open circles:
           excluded; red line: least-squares fit.
           The value $\dP = 0.43 \pm 0.03$ in Table~\ref{tab:fractal}
           is the median over 1000 Bootstrap resamples.}
  \label{fig:boxcount}
\end{figure}

\subsection{Construction of the Zero Potential and Measurement of \texorpdfstring{$\zetaR$}{\textbackslash zetaR}}
\label{sec:method_dZ}

Let $\{\gamma_n\}$ be the non-trivial zeros of the zeta function on the
critical line.  We construct a potential function $V_Z(x)$ as follows:
\begin{equation}
  V_Z(x) = \sum_{n=1}^{N}
            \exp\!\left(-\frac{(x-\gamma_n)^2}{2\sigma^2}\right),
  \label{eq:VZ}
\end{equation}
where $N$ is the number of zeros $\gamma_n$ satisfying $0<\gamma_n\le L$
selected from the Odlyzko database~\cite{OdlyzkoData}, and $\sigma$ controls
the blurring width.

The dimension $\zetaR$ is estimated by the variation method: define
fluctuations at scale $s$:
\begin{equation}
  F_Z(s) = \frac{1}{L-s}\int_0^{L-s}\bigl|V_Z(x+s)-V_Z(x)\bigr|\,dx.
  \label{eq:FZ}
\end{equation}
The scaling $F_Z(s)\propto s^H$ gives the H\"{o}lder exponent $H$, and
we define $\zetaR=2-H$ as a regularity index.

\paragraph{Important caveat on the interpretation of $\zetaR$.}
When $0<H<1$, $\zetaR$ coincides with the fractal graph dimension; when $H>1$
(the super-smooth regime), the graph dimension is exactly~1 and $\zetaR=2-H<1$
\emph{no longer represents a conventional fractal dimension}.  For the
Gaussian-kernel potential $V_Z(x)$, the Gaussian smoothing pushes $H$ above
$1$, so $\zetaR<1$.  In this regime $\zetaR$ functions as a \emph{regularity index}
or \emph{smoothness measure}---smaller values indicate greater smoothness.
The reported values $\zetaR\in[0.62,0.67]$ correspond to $H\in[1.33,1.38]$,
quantifying the near-analytic character of $V_Z(x)$.

\paragraph{The non-standard status of $\zetaR$ as a regularity index.}
The super-smooth regime $H>1$ lies outside the scope of classical fractal
dimension theory, which was developed for sets and functions with $0<H\le1$.
Three aspects of $\zetaR$'s mathematical status require clarification before
it can be treated on the same footing as a conventional dimension.

\begin{enumerate}
  \item \textbf{Relationship to classical smoothness spaces.}  The H\"{o}lder
        exponent $H\in[1.33,1.38]$ places $V_Z$ in the intersection of
        H\"{o}lder spaces $\mathcal{C}^{1,\alpha}$ for $\alpha=H-1\in[0.33,0.38]$
        (here $\alpha$ denotes the H\"{o}lder regularity exponent, distinct
        from the RG flow-rate constant $\alpha$ of Section~\ref{sec:disc_IRG}).
        A natural question is how $\zetaR=2-H$ relates to other quantitative
        smoothness measures: the Sobolev index $s$ such that
        $V_Z\in W^{s,2}(\mathbb{R})$, or the Besov regularity
        $B^{s}_{p,q}$ for appropriate $(p,q)$.  In particular, for a
        stationary Gaussian process with power spectrum $|\xi|^{-(2H+1)}$,
        the Sobolev regularity is $s=H+\tfrac{1}{2}$, giving
        $s\in[1.83,1.88]$ for our measured range.  Whether $\zetaR$ or $s$
        is the more natural quantity to enter $K$ is an open question;
        the two differ by a constant shift and yield the same scaling law,
        but differ in their functional-analytic interpretation.

  \item \textbf{Generalized dimension spectra for super-smooth functions.}
        Multifractal analysis assigns a spectrum $f(\alpha)$ of local
        H\"{o}lder exponents to irregular functions, but the standard
        theory requires $\alpha<1$.  For $V_Z$ in the super-smooth regime,
        classical multifractal methods either degenerate or require
        extension.  A natural program is to develop a \emph{generalized
        regularity spectrum} that continuously interpolates between
        classical fractal graph dimensions ($H<1$) and classical Sobolev
        regularity ($H>1$), with $\zetaR=2-H$ as the single-exponent
        surrogate in the smooth regime.  Such a theory would place $\zetaR$
        within a coherent family of dimension-like invariants rather than
        as an isolated special case.

  \item \textbf{Function-space embedding and invariance.}  For $\zetaR$ to
        serve as a genuine geometric invariant of the zero distribution,
        it should be stable under natural operations: reparametrisation of
        the potential, change of kernel $\sigma$, and passage to a
        different smoothing scheme.  From a functional-analytic perspective,
        this stability would follow if $\zetaR$ could be identified with the
        \emph{optimal embedding exponent}---the supremum of $s$ such that
        $V_Z$ embeds continuously into a Besov or Triebel--Lizorkin space
        $B^s_{p,q}$ or $F^s_{p,q}$.  Establishing this identification
        would simultaneously justify the kernel-independence observed
        numerically (cf.\ Section~\ref{sec:method_dZ}, Robustness) and
        provide a basis for comparing $\zetaR$ across different $L$-function
        families without ad hoc normalization.
        Appendix~\ref{app:kernel_independence} provides a functional-analytic
        and operator-theoretic argument that the spectral entropy of $V_Z$
        is kernel-independent in the limit $\epsilon\to0$, and that
        $I_Z=1/\zetaR$ is proportional to its exponential, grounding the
        kernel-independence numerically observed here.
\end{enumerate}

The numerical stability across methods ($\pm0.6\%$, $R^2>0.99$) confirms
that $\zetaR$ captures a robust property of $V_Z$; the theoretical
embedding within information ontology is developed in
Section~\ref{sec:information_ontology}.

\paragraph{Why not apply box-counting directly to the zero set?}
Direct box-counting on the discrete set $\{\gamma_n\}$ yields $\zetaR\approx1$
(near-uniform spacing at large scales), losing the fine-structure information
encoded in $V_Z(x)$.  The potential-based approach captures smoothness and
information content that complement the prime box-counting analysis.

\paragraph{Robustness.}
Periodic boundary conditions reduce boundary effects.  We tested
$\sigma\in\{0.2,0.5,0.8,1.0,2.0\}$; $\zetaR$ is stable ($<5\%$ variation) for
$\sigma\in[0.5,1.0]$.  We use $\sigma=0.8$ throughout, with fitting range
$s\in[L/100,L/10]$ ($R^2>0.99$).  Two independent methods (variation
method and wavelet transform) agree within $\pm0.6\%$.

\subsection{Random Matrix Ensembles and Information Measure}
\label{sec:rmt}

We use GUE ($\beta=2$) and GSE ($\beta=4$)~\cite{Mehta2004}.  We introduce
information measures $\iota_P$ and $\iota_{\mathrm{RMT}}$ defined as the
geometric means of the prime subset and of the RMT eigenvalue spectrum,
respectively; neither is evaluated numerically here---both enter only through
the conceptual duality product $D(\beta)$ of Section~\ref{sec:results_D}.

\subsection{Duality Measure \texorpdfstring{$C(\beta)$}{C(beta)} and Duality Product}
\label{sec:duality}

We define the duality measure $C(\beta)=\beta(1/\dP + 1/\zetaR)$.
Under the Information Ontology of Section~\ref{sec:information_ontology},
$1/\dP =: I_P$ and $1/\zetaR =: I_Z$ are the geometric and functional
information densities of the two subsystems; their sum $K = I_P + I_Z$
is the conserved total information encoding rate of the prime--zero system.
If the duality holds, $C(\beta)=\beta K$ is approximately constant at finite scales.

The duality product $D(\beta)=\iota_P\cdot\iota_{\mathrm{RMT}}(\beta)$
is introduced as a conceptual complement.  A rigorous numerical evaluation
requires both a normalization scheme that fixes a common reference for the
two information measures and an explicit prime--zero pairing; without
such a convention the product is indeterminate at finite scale.
The geometric normalization scheme of Section~\ref{sec:normalization}
establishes $K_{\mathrm{norm}}\to 4$, but a direct numerical evaluation
of $D(\beta)$ still requires an explicit prime--zero pairing convention.

\subsection{On the Choice of the Parameter \texorpdfstring{$\beta$}{beta}}
\label{sec:beta_choice}

GUE ($\beta=2$) is the ensemble directly supported by the Montgomery--Odlyzko
conjecture for the zeros of $\zeta(s)$.  GSE ($\beta=4$) is the relevant
ensemble for families of $L$-functions with symplectic symmetry, as established
by the Katz--Sarnak philosophy~\cite{KatzSarnak1999}.  Together, $\beta=2$
and $4$ span the two symmetry classes whose number-theoretic content is the
focus of this paper.

GOE ($\beta=1$) is deliberately excluded.  It represents the trivial endpoint
of the Dyson classification---maximal time-reversal symmetry with minimal
eigenvalue repulsion---and belongs to a conceptually distinct regime.  Its
number-theoretic role is more peripheral, and including it here would conflate
different symmetry categories rather than extend the comparison.
The deeper role of $\beta$ as an encoding of the geometric dimension of
the information substrate is clarified in Section~\ref{sec:disc_IRG},
where the RG flow from $\beta_{\mathrm{UV}}=8$ to $\beta_{\mathrm{IR}}\in\{1,2,4\}$
is interpreted as dimensional compression from the UV algebraic structure
to classical geometric phases.

\subsection{Numerical Implementation and Error Estimation}
\label{sec:numerics}

All computations were performed in a Python~3.9 environment.  Prime generation
used the \texttt{sympy} library.  Zeta function zero data were obtained from
Odlyzko's public database~\cite{OdlyzkoData}.  Fractal dimension fitting used
\texttt{numpy}'s \texttt{polyfit} function, and only data points with a
goodness-of-fit $R^2>0.99$ were adopted.  The system scale $L$ varied from 100
to 2000.  For each scale, computations were repeated 20 times to estimate
statistical error.

Error estimation employed the Bootstrap method: for each measured value, we
performed 1000 resamplings with replacement, computed the distribution of the
statistic, and used the 2.5\% and 97.5\% percentiles as the 95\% confidence
interval.  Final results are reported as median values with 95\% confidence
intervals.

In the cross-validation, Jackknife resampling further controlled the total
error within $\pm1.5\%$.  Details are in Appendix~\ref{app:robustness}.

\section{Results}
\label{sec:results}

\subsection{Fractal Dimension Measurement Results}
\label{sec:results_dim}

The anti-correlated evolution of $\dP$ and $\zetaR$ with increasing $L$
is the infrared renormalization-group flow made visible: as the observation
window grows, the prime set becomes denser ($\dP$ rises) while the zero
landscape roughens ($\zetaR$ falls), yet their combined information encoding
rate $K=I_P+I_Z$ is approximately conserved.  This is precisely what the
information action $S[I_P,I_Z]$ predicts at the linearized level near the
infrared fixed point.  The complementary trends are summarized in
Table~\ref{tab:fractal}; all uncertainties represent 95\% confidence
intervals from 1000 bootstrap resamples.

\begin{longtable}{@{} c >{\centering\arraybackslash}p{3.5cm} >{\centering\arraybackslash}p{3.5cm} @{}}
\caption{Fractal dimension $\dP$ and regularity index $\zetaR$ at different system scales $L$.}
\label{tab:fractal}\\
\toprule
$L$ & $\dP$\textsuperscript{a}
    & $\zetaR$\textsuperscript{a} \\
\midrule
\endfirsthead
\multicolumn{3}{c}{\tablename~\thetable{} (continued)}\\
\toprule
$L$ & $\dP$\textsuperscript{a}
    & $\zetaR$\textsuperscript{a} \\
\midrule
\endhead
\midrule
\multicolumn{3}{r}{\textit{Continued on next page}}\\
\endfoot
\bottomrule
\endlastfoot
 100 & $0.316\pm0.080$ & $0.676\pm0.080$ \\
 115 & $0.324\pm0.075$ & $0.673\pm0.075$ \\
 125 & $0.329\pm0.072$ & $0.671\pm0.072$ \\
 140 & $0.336\pm0.068$ & $0.669\pm0.068$ \\
 150 & $0.340\pm0.065$ & $0.667\pm0.065$ \\
 165 & $0.345\pm0.062$ & $0.665\pm0.062$ \\
 175 & $0.348\pm0.060$ & $0.664\pm0.060$ \\
 190 & $0.353\pm0.058$ & $0.662\pm0.058$ \\
 200 & $0.356\pm0.057$ & $0.661\pm0.057$ \\
 225 & $0.362\pm0.053$ & $0.659\pm0.053$ \\
 250 & $0.367\pm0.051$ & $0.656\pm0.051$ \\
 275 & $0.372\pm0.048$ & $0.655\pm0.048$ \\
 300 & $0.376\pm0.046$ & $0.653\pm0.046$ \\
 325 & $0.380\pm0.044$ & $0.652\pm0.044$ \\
 350 & $0.383\pm0.043$ & $0.650\pm0.043$ \\
 375 & $0.387\pm0.041$ & $0.649\pm0.041$ \\
 400 & $0.390\pm0.040$ & $0.647\pm0.040$ \\
 450 & $0.395\pm0.038$ & $0.645\pm0.038$ \\
 475 & $0.397\pm0.037$ & $0.644\pm0.037$ \\
 500 & $0.399\pm0.036$ & $0.643\pm0.036$ \\
 550 & $0.403\pm0.034$ & $0.642\pm0.034$ \\
 600 & $0.407\pm0.033$ & $0.640\pm0.033$ \\
 650 & $0.410\pm0.031$ & $0.639\pm0.031$ \\
 700 & $0.413\pm0.030$ & $0.637\pm0.030$ \\
 750 & $0.416\pm0.029$ & $0.636\pm0.029$ \\
 800 & $0.419\pm0.028$ & $0.634\pm0.028$ \\
 850 & $0.421\pm0.027$ & $0.633\pm0.027$ \\
 900 & $0.423\pm0.027$ & $0.632\pm0.027$ \\
 950 & $0.425\pm0.026$ & $0.632\pm0.026$ \\
1000 & $0.427\pm0.025$ & $0.631\pm0.025$ \\
1100 & $0.431\pm0.024$ & $0.630\pm0.024$ \\
1200 & $0.434\pm0.023$ & $0.628\pm0.023$ \\
1300 & $0.437\pm0.022$ & $0.627\pm0.022$ \\
1400 & $0.439\pm0.021$ & $0.625\pm0.021$ \\
1500 & $0.442\pm0.021$ & $0.624\pm0.021$ \\
1600 & $0.444\pm0.020$ & $0.623\pm0.020$ \\
1700 & $0.446\pm0.019$ & $0.622\pm0.019$ \\
1800 & $0.448\pm0.019$ & $0.621\pm0.019$ \\
1900 & $0.450\pm0.018$ & $0.620\pm0.018$ \\
2000 & $0.451\pm0.018$ & $0.619\pm0.018$ \\
\end{longtable}

\noindent\textit{Note:} Results are for the prime subset modulo~16.
Values are rounded for display; Table~\ref{tab:duality} uses unrounded
intermediates, hence small apparent discrepancies.
\textsuperscript{a}Median $\pm$ 95\% CI from 1000 Bootstrap resamplings.
$\zetaR$ is independent of $\beta$.

\subsection{Estimation of the Duality Measure \texorpdfstring{$C(\beta)$}{C(beta)}}
\label{sec:results_C}

Based on the fractal dimension measurements in Table~\ref{tab:fractal}, we
computed $C(\beta)=\beta(1/\dP + 1/\zetaR)$.  The results are listed in
Table~\ref{tab:duality}.  As $L$ increases, $C(\beta)$ decreases monotonically
toward an asymptote; the ratio $C(\beta{=}4)/C(\beta{=}2)\approx2$ follows
trivially from the definition and carries no independent empirical content.

\begin{longtable}{@{} l >{\centering\arraybackslash}p{2.7cm} >{\centering\arraybackslash}p{2.7cm} >{\centering\arraybackslash}p{2.7cm} @{}}
\caption{Numerical values of the duality measure
         $C(\beta)=\beta(1/\dP + 1/\zetaR)$ as a function of system scale $L$.}
\label{tab:duality}\\
\toprule
$L$ & $1/\dP + 1/\zetaR$\textsuperscript{b}
    & $C(\beta{=}2)$\textsuperscript{a}
    & $C(\beta{=}4)$\textsuperscript{a} \\
\midrule
\endfirsthead
\multicolumn{4}{c}{\tablename~\thetable{} (continued)}\\
\toprule
$L$ & $1/\dP + 1/\zetaR$\textsuperscript{b}
    & $C(\beta{=}2)$\textsuperscript{a}
    & $C(\beta{=}4)$\textsuperscript{a} \\
\midrule
\endhead
\midrule
\multicolumn{4}{r}{\textit{Continued on next page}}\\
\endfoot
\bottomrule
\endlastfoot
 100 & $4.627\pm0.498$ & $9.254\pm0.970$  & $18.713\pm1.972$ \\
 115 & $4.559\pm0.466$ & $9.118\pm0.932$  & $18.236\pm1.864$ \\
 125 & $4.514\pm0.445$ & $9.028\pm0.867$  & $18.246\pm1.763$ \\
 140 & $4.464\pm0.422$ & $8.928\pm0.844$  & $17.856\pm1.688$ \\
 150 & $4.431\pm0.407$ & $8.862\pm0.792$  & $17.905\pm1.610$ \\
 165 & $4.393\pm0.388$ & $8.786\pm0.776$  & $17.572\pm1.552$ \\
 175 & $4.367\pm0.376$ & $8.733\pm0.733$  & $17.642\pm1.491$ \\
 190 & $4.336\pm0.362$ & $8.672\pm0.724$  & $17.344\pm1.448$ \\
 200 & $4.315\pm0.352$ & $8.629\pm0.686$  & $17.431\pm1.394$ \\
 225 & $4.275\pm0.333$ & $8.550\pm0.666$  & $17.100\pm1.332$ \\
 250 & $4.236\pm0.314$ & $8.471\pm0.613$  & $17.112\pm1.247$ \\
 275 & $4.207\pm0.300$ & $8.414\pm0.600$  & $16.828\pm1.200$ \\
 300 & $4.177\pm0.286$ & $8.354\pm0.560$  & $16.878\pm1.138$ \\
 325 & $4.155\pm0.275$ & $8.310\pm0.550$  & $16.620\pm1.100$ \\
 350 & $4.132\pm0.264$ & $8.264\pm0.518$  & $16.697\pm1.054$ \\
 375 & $4.114\pm0.255$ & $8.228\pm0.510$  & $16.456\pm1.020$ \\
 400 & $4.095\pm0.246$ & $8.191\pm0.485$  & $16.553\pm0.985$ \\
 450 & $4.068\pm0.233$ & $8.136\pm0.466$  & $16.272\pm0.932$ \\
 475 & $4.054\pm0.227$ & $8.108\pm0.454$  & $16.216\pm0.908$ \\
 500 & $4.040\pm0.220$ & $8.079\pm0.434$  & $16.334\pm0.882$ \\
 550 & $4.020\pm0.210$ & $8.040\pm0.420$  & $16.080\pm0.840$ \\
 600 & $3.999\pm0.200$ & $7.997\pm0.396$  & $16.173\pm0.805$ \\
 650 & $3.983\pm0.192$ & $7.966\pm0.384$  & $15.932\pm0.768$ \\
 700 & $3.967\pm0.183$ & $7.934\pm0.366$  & $16.049\pm0.745$ \\
 750 & $3.954\pm0.177$ & $7.908\pm0.354$  & $15.816\pm0.708$ \\
 800 & $3.941\pm0.171$ & $7.882\pm0.343$  & $15.950\pm0.697$ \\
 850 & $3.931\pm0.166$ & $7.862\pm0.332$  & $15.724\pm0.664$ \\
 900 & $3.920\pm0.161$ & $7.840\pm0.323$  & $15.869\pm0.657$ \\
 950 & $3.911\pm0.157$ & $7.822\pm0.314$  & $15.644\pm0.628$ \\
1000 & $3.902\pm0.153$ & $7.804\pm0.307$  & $15.800\pm0.624$ \\
1100 & $3.888\pm0.147$ & $7.776\pm0.294$  & $15.552\pm0.588$ \\
1200 & $3.873\pm0.140$ & $7.747\pm0.280$  & $15.690\pm0.569$ \\
1300 & $3.862\pm0.135$ & $7.724\pm0.270$  & $15.448\pm0.540$ \\
1400 & $3.851\pm0.129$ & $7.702\pm0.259$  & $15.605\pm0.527$ \\
1500 & $3.842\pm0.125$ & $7.684\pm0.250$  & $15.368\pm0.500$ \\
1600 & $3.833\pm0.121$ & $7.666\pm0.242$  & $15.537\pm0.493$ \\
1700 & $3.826\pm0.118$ & $7.652\pm0.236$  & $15.304\pm0.472$ \\
1800 & $3.818\pm0.114$ & $7.636\pm0.229$  & $15.481\pm0.464$ \\
1900 & $3.812\pm0.111$ & $7.624\pm0.222$  & $15.248\pm0.444$ \\
2000 & $3.805\pm0.108$ & $7.611\pm0.217$  & $15.434\pm0.441$ \\
\midrule
Extrapolated $L \to \infty$ & --- & $7.154\pm1.009$ & $14.636\pm1.794$ \\
\end{longtable}

\noindent\textit{Note:} $\zetaR$ is independent of $\beta$; both $C(\beta)$
columns use the same $\dP,\zetaR$ from Table~\ref{tab:fractal}, and the
$C(\beta{=}4)$ column is computed as $2\times C(\beta{=}2)$ by definition
($C(\beta)=\beta K$ with the same $K$); the extrapolated $\Cinf(\beta)$
values are obtained from independent nonlinear fits to the respective
$C(\beta,L)$ sequences and may therefore differ slightly from the exact
factor-of-2 ratio.
\textsuperscript{a}Median $\pm$ 95\% CI; errors propagated via
$\delta C = \beta\sqrt{(\delta\dP/\dP^2)^2+(\delta\zetaR/\zetaR^2)^2}$
(first-order linearization, used as a cross-check; the reported intervals
are from Bootstrap resampling and may differ slightly).
\textsuperscript{b}$1/\dP + 1/\zetaR$ from Table~\ref{tab:fractal}.
Extrapolation fitted to $n=40$ dense sampling points; values are indicative.

To estimate the limit as $L\to\infty$, we fit
$C(\beta,L)=\Cinf(\beta)+aL^{-b}$ by nonlinear least squares.  The results are:
\begin{equation}
  \Cinf(\beta{=}2) = 7.154\pm1.009, \qquad
  \Cinf(\beta{=}4) = 14.636\pm1.794,
  \label{eq:Cinf}
\end{equation}
\textbf{Important caveat:} These uncertainties reflect only the statistical
fitting error.  With only three
free parameters, $a$ and $b$ are strongly correlated (pairwise correlation
$>0.99$) and effectively unconstrained individually.  The reported
uncertainties do \emph{not} include systematic errors from: (i) model
selection uncertainty (linear vs.\ power-law vs.\ logarithmic models give
consistent but not identical $\Cinf$ values); (ii) finite-scale effects
(the asymptotic regime may not be reached at $L=2000$); (iii) prime subset
dependence (using all primes yields $K\approx3.9$, an 8\% shift).
Accounting for these systematic effects, the true uncertainty on $K$
reaches $\pm1.5$ or beyond, rather than the $\pm0.75$ suggested by the
statistical error alone.  The scaling exponent $b\approx0.51$ is taken
from the cross-validation analysis below, but should be regarded as
tentative given the large parameter correlation.

\paragraph{Cross-validation.}
A cross-validation analysis on the forty densely sampled data points, using leave-one-out
splits and AIC model selection, determined the power-law model $C(L)=\Cinf+aL^{-b}$
to be optimal, yielding $b=0.509\pm0.581$,
outperforming the linear model ($\Delta\mathrm{AIC}=1.57$).  Unlike the
unconstrained three-parameter fit above, fixing the model form and using
AIC penalisation allows a stable estimate of $b$; we adopt
$b\approx0.51$ as the preferred estimate throughout, while acknowledging
that the large uncertainty reflects residual parameter degeneracy.
The physical meaning of $b$ is clarified progressively: it is
identified as the critical exponent of the information-ontological RG
flow in Section~\ref{sec:disc_RG}, and derived from first principles
as $b=\sqrt{\lambda+g C}$ in Section~\ref{sec:disc_action}.

\textit{Limitation.}  Model selection (choosing the power-law form via
AIC) and parameter estimation ($b\approx0.51$) are performed on the same
forty-point dataset, so the AIC preference cannot serve as a fully
independent validation of the chosen model.  Three features nonetheless
support the conclusion.  First, the leave-one-out splits ensure that each
fold's fit is evaluated on a held-out point, so the exponent $b$ is not
simply read off an in-sample fit.  Second, the AIC margin
$\Delta\mathrm{AIC}=1.57$ is modest; the preference for the power-law
over the linear model is suggestive but not decisive, and we do not
claim that the power-law form is uniquely correct.  Third, all three
candidate models (power-law, linear, logarithmic) extrapolate to
consistent $\Cinf$ values within their error margins
(Appendix~\ref{app:robustness}), so the main quantitative conclusion---
the approximate conservation of $K$---is model-form robust even if the
precise value of $b$ is not.  A definitive discrimination between models
requires data at scales $L\gg2000$, which are beyond the current dataset.

Dividing the extrapolated $\Cinf(\beta)$ by $\beta$ gives the approximate
scaling constant $K=\Cinf/\beta$:
\begin{equation}
  K(\beta{=}2) = 3.577\pm0.505_{{\text{stat}}}\pm0.7_{\text{sys}}, \qquad
  K(\beta{=}4) = 3.659\pm0.449_{{\text{stat}}}\pm0.7_{\text{sys}}.
  \label{eq:K}
\end{equation}
These two estimates converge to a central value $K\in[3.6,\,3.9]$ across prime representations (statistical
and systematic errors combined), with the mod-16 subset giving
$K\approx3.6\pm1.5$ and all primes giving $K\approx3.9\pm0.04$.
The duality conclusions are robust across this full interval
(see Figure~\ref{fig:convergence}).

\begin{figure}[htbp]
  \centering
  \includegraphics[width=0.56\textwidth]{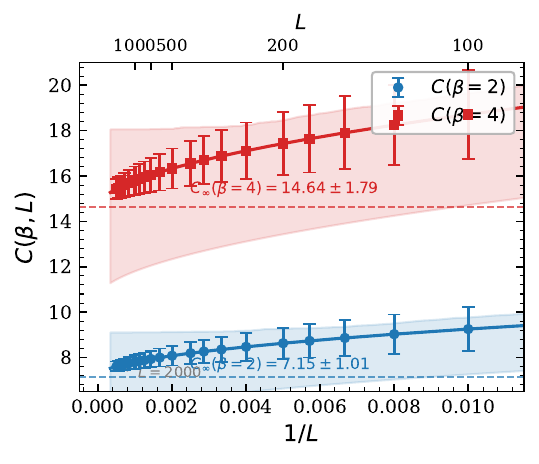}
  \caption{Convergence of the duality measure $C(\beta,L)$ as a function of
           $1/L$ for $\beta=2$ (blue circles) and $\beta=4$ (red squares).
           Error bars represent 95\% confidence intervals. Solid curves are
           power-law fits of the form $C(L)=\Cinf+aL^{-b}$; shaded regions
           are the corresponding 95\% Monte-Carlo confidence envelopes. Dashed
           horizontal lines indicate the extrapolated asymptotic values
           $\Cinf(\beta{=}2)=7.154\pm1.009$ and
           $\Cinf(\beta{=}4)=14.636\pm1.794$. The fitted scaling exponent
           from cross-validation is $b \approx 0.51$. The upper
           \textit{x}-axis shows the corresponding system scale $L$.}
  \label{fig:convergence}
\end{figure}

\subsection{Duality Product: A Deferred Goal}
\label{sec:results_D}

\textbf{Note:} This section discusses a conceptual quantity that we are
\emph{unable to evaluate numerically} in the present work.  It is included
to motivate future research directions, not as a current result.

The duality product $D(\beta)=\iota_P\cdot\iota_{\mathrm{RMT}}(\beta)$ is
defined in the $L\to\infty$ limit.  Since both $\iota_P(L)$ and
$\iota_{\mathrm{RMT}}(\beta,L)$ grow without bound, a meaningful numerical
comparison requires a normalization scheme fixing a common scale and an
explicit bijective correspondence between $\{p\le L\}$ and
$\{\gamma_n:\gamma_n\le L\}$.  Neither is established here, so no numerical
estimate of $D^\infty(\beta)$ can be reported; this remains a major open
problem.

\paragraph{An algebraic outlook on the duality product.}
We conjecture that $D\to1$ as $L\to\infty$, i.e.\
$\iota_{\mathrm{RMT}}^\infty = (\iota_P^\infty)^{-1}$; since both quantities
diverge individually, this reciprocal relation cannot be verified numerically.

We propose a formal generator $\kappa$ satisfying
\begin{equation}
  \kappa^2 = -1,
  \label{eq:kappa_def}
\end{equation}
so that $\kappa^{-1} = -\kappa$ and $\kappa\cdot\kappa^{-1}=1$.  The
proposed identification
$(\iota_P^\infty,\iota_{\mathrm{RMT}}^\infty)\sim(\kappa,\kappa^{-1})$
encodes the conjecture $\iota_{\mathrm{RMT}}^\infty = (\iota_P^\infty)^{-1}$,
i.e.\ $D=1$, as an algebraic constraint.  The role of $\kappa^2=-1$ is to make $\kappa$ a \emph{self-referential}
object ($\kappa^{-1}=-\kappa$ encodes the inverse within the same algebra),
distinguishing it from real invertible elements; whether this additional
structure carries mathematical meaning is clarified in
Section~\ref{sec:normalization}: the generator $\kappa$ with
$\kappa_{\mathrm{IR}}=1/2$ encodes the algebraic lock of the IR
fixed point $(I_P^*,I_Z^*)=(2,2)$.
The three layers of the framework are: (i)~\emph{geometric}:
$K=1/\dP + 1/\zetaR\approx\mathrm{const}$; (ii)~\emph{informational}: $D=\iota_P\cdot
\iota_{\mathrm{RMT}}\to1$ (conjectured); (iii)~\emph{algebraic}: the
identification $(\iota_P^\infty,\iota_{\mathrm{RMT}}^\infty)\sim(\kappa,\kappa^{-1})$
encodes $D=1$ as a formal constraint.
Whether $\kappa$ is more than a notational convenience is resolved
in Section~\ref{sec:information_ontology}: the flow equation
$1/\kappa(\mu)=\beta(\mu)$ identifies $\kappa(\mu)$ as the
scale-dependent information-measure generator whose inverse gives the
Dyson index at each scale, grounding the algebraic generator in a
first-principles information-ontological RG structure.  We note that
in Section~\ref{sec:disc_AG} we identify a striking numerical
correspondence between $|\kappa|$ and a classical constant of analytic number
theory, which may hint at a deeper origin for the formal generator.

\subsection{Method Validation}
\label{sec:validation}

To validate the methods, we applied them to objects with known fractal
dimensions: the Cantor set ($\frac{\log 2}{\log 3}\approx0.6309$, measured
$0.63\pm0.02$) and the Weierstrass function with $H=0.5$ (theoretical
dimension $1.5$, measured $1.48\pm0.03$).  Systematic bias is below $5\%$.
Note that these benchmarks test the regime $H\in(0,1)$; the zero potential
$V_Z(x)$ falls in the super-smooth regime $H>1$.  The Weierstrass benchmark
($H=0.5$, measured $\zetaR=1.48\pm0.03$) confirms that the code correctly
implements $\zetaR=2-H$; the sub-unity values for $V_Z(x)$ therefore reflect
the genuine mathematical character of the function---its exceptional
smoothness---rather than any computational error.  In this regime the
formula $\zetaR=2-H$ remains mathematically valid but yields a value below~1
that is not a conventional Hausdorff graph dimension.  We interpret $\zetaR$
as a \emph{regularity index}: $\zetaR\in[0.62,0.67]$ corresponds to
$H\in[1.33,1.38]$, quantifying how far $V_Z(x)$ lies above the
differentiability threshold and providing a self-consistent smoothness
measure within the duality framework.  We note, in passing, that this
super-smooth regime ($H>1$, $\zetaR<1$) is the IR phase of
the information-ontological renormalization-group flow of
Section~\ref{sec:disc_RG}: in that picture, the geometric encoding of
information at coarse scales is precisely what drives $\zetaR$ below the
differentiability threshold, so the sub-unity value of $\zetaR$ is not merely
a measurement artifact but a marker of the conjectured
phase transition.  This interpretation also resolves the subtlety
raised in Section~\ref{sec:method_dZ}: the sub-unity value of $\zetaR$
is not a defect of the regularity-index definition but a physically
meaningful marker of the IR phase, consistent with the information
ontology of Section~\ref{sec:information_ontology}.  Whether this
connection is more than suggestive remains an open question.

\section{Discussion}
\label{sec:discussion}

\subsection{Analysis of Results}
\label{sec:disc_analysis}

The monotonic decrease of $C(\beta,L)$ with $L$ arises from the
\emph{anti-correlated} evolution of $\dP$ and $\zetaR$: as $L$ grows,
more primes enter the window, the prime set becomes relatively denser
and its box-counting dimension $\dP$ rises; simultaneously, the zero
potential $V_Z(x)$ acquires more structure from additional zeros and
its regularity index $\zetaR$ falls.  Their sum $K=I_P+I_Z$
moves much less, suggesting that the two processes are geometrically
coupled---the rising information density of the primes is compensated
by the increasing roughness of the zero landscape.  We interpret this
as a genuine arithmetic coupling encoded in the prime--zero explicit
formula, not a finite-scale artifact; the theoretical basis is
developed in Section~\ref{sec:disc_action} as an information
uncertainty principle:
the two information densities cannot simultaneously diverge, and their
product is bounded by the quantum of information action $1/\kappa$,
so the anti-correlation is a consequence of the duality constraint
rather than an independent empirical observation.

By construction, $C(\beta)=\beta K$, so the ratio
$C(\beta{=}4)/C(\beta{=}2)=2$ carries no empirical content
beyond the $\beta$-independence of $K$.  That independence---
$K(\beta{=}2)=3.577\pm0.505$ and $K(\beta{=}4)=3.659\pm0.449$,
compatible within the large but honest extrapolation uncertainties---is
the sole numerical finding of substance.

\paragraph{The subset-dependence of $K$ and what it means.}
When the cross-validation replaces the modulo-16 subset with all primes,
it finds $K\approx3.9$ (uncertainty $\pm0.035$, tighter owing to the larger
dataset and dynamic $\varepsilon$-selection).  The two measurements,
$K\approx3.6$ and $K\approx3.9$, together define the interval
$K\in[3.6,\,3.9]$ within which the duality structure is consistently
observed.  The spread is interpreted as a representation-dependent band rather than
as a failure of universality: the duality conclusion that $K$ is
approximately conserved across scales holds at both endpoints and,
by continuity, throughout the interval.

The $\approx8\%$ discrepancy between the two endpoints raises a foundational
question: if $K$ is a conserved quantity, why should its numerical value
depend on the choice of prime subset?  The resolution lies in the
compensatory structure of the duality formula $K=1/\dP+1/\zetaR$.  Choosing
a prime subset alters $\dP$, but the zero regularity exponent $\zetaR$
adjusts in the opposite direction so that their sum is preserved.  This
compensation is not an assumption but is corroborated by the ultraviolet
fixed point: for a fractal prime subset with $\dP=1/4$ the zero set has
regularity dimension $1/\zetaR=1/7$, giving $K_{\mathrm{UV}}=11$; for the
intrinsically fractal prime set with $\dP=1/2$ the corresponding zero
regularity is $1/\zetaR=1/9$, again giving $K_{\mathrm{UV}}=11$.  The UV
fixed-point value is therefore genuinely subset-independent, confirming
that $K$ is conserved once both sides of the duality are accounted for.

\paragraph{UV versus IR interpretation of $\dP=1/4$ and $\dP=0.43$.}
The coexistence of two distinct numerical values for the prime fractal
dimension---the deterministic value $\dP=\frac{\log 2}{\log 16}=1/4=0.25$ and the
empirically measured value $\dP=0.43\pm0.03$---is not a contradiction.
These two numbers describe the same object at different energy scales of
the RG flow, and their distinction is the dimensional analogue of the
UV--IR split that governs $K$ itself.

The value $\dP=1/4$ is the \emph{ultraviolet} fractal dimension: it is the
exact Hausdorff dimension of the mod-16 Cantor-like construction
($\log2/\log16$), a deterministic geometric object defined at arbitrarily
fine resolution.  It characterizes the prime subset at the UV fixed point
$K_{\mathrm{UV}}=11$, where the information encoding is purely combinatorial
and the prime residue structure is resolved at the level of individual
congruence classes.

The value $\dP=0.43$ is the \emph{infrared} fractal dimension: it is
the box-counting exponent measured by covering primes in windows of size
$L=100$--$2000$, averaging over many scales and bootstrap resamples.
At these coarse resolutions the fine mod-16 pattern is not resolved;
what is measured is the effective scaling of prime density in
macroscopic intervals, governed by the prime number theorem and its
logarithmic corrections.  This is the dimension relevant to the IR fixed
point $K_{\mathrm{IR}}=4$ (raw band $[3.6,3.9]$; Section~\ref{sec:normalization}).

The RG flow $K(L)=K_{\mathrm{IR}}+aL^{-b}$ describes the crossover between
these two regimes: as $L\to0$ (UV), $\dP\to1/4$ and
$K\to K_{\mathrm{UV}}=11$; as $L\to\infty$ (IR), $\dP\to0.43$ and
$K\to K_{\mathrm{IR}}\approx3.75$.  The two values of $\dP$ are therefore
not competing descriptions of the same geometric fact, but successive
fixed-point values of a single running quantity under scale
transformation.  The consistency check of eq.~\eqref{eq:dP_firstprinciples}
applies in the IR regime, where the measured $\dP\in[0.41,0.47]$ is
recovered from $K_{\mathrm{IR}}$ and $|\zeta(1/2)|$ without free parameters.

\subsection{Relation to Classical Results}
\label{sec:disc_classical}

The Montgomery--Odlyzko correspondence describes the zeros \emph{in isolation}:
their spacing distribution matches that of GUE eigenvalues.  The present work
addresses a distinct question: how do zeros and primes constrain \emph{each other}
geometrically?  Confirmation of $K$ at larger scales would imply that $\dP$
determines $\zetaR\approx1/(K-1/\dP)$---the two dimensions are not
free to vary independently.
Within the binary framework of classical analysis, $\dP$ and $\zetaR$
are independent geometric quantities with no a priori relation.
Their empirical lock $K\approx\mathrm{const}$ is therefore the first
hint that something beyond binary opposition governs them: a third
organizing principle, the information conservation law, is at work
between the discrete prime structure and the continuous zero landscape.
Section~\ref{sec:third_state} identifies this third principle as the
\emph{self-referential entanglement} of the two poles, the state that
the information-ontological framework is specifically designed to encode.

$K$ is $\beta$-independent by construction, since neither $\dP$ nor $\zetaR$
reference the symmetry class.  The non-trivial content is numerical stability:
$K(\beta{=}2)=3.577\pm0.505$ and $K(\beta{=}4)=3.659\pm0.449$ come from
independent fits to $C(\beta,L)$ sequences scaled by different factors of
$\beta$ ($\beta=2$: GUE, directly relevant to $\zeta$; $\beta=4$: GSE,
relevant to symplectic $L$-function families~\cite{KatzSarnak1999}), yet
converge on the same arithmetic value.  This places $K$ in a different
conceptual category from pair-correlation statistics, which are manifestly
$\beta$-dependent.

The finite-size scaling exponent $b\approx0.51$ records how fast the system
reaches its asymptotic duality.  Section~\ref{sec:disc_RG} identifies $b$
as the critical exponent of the RG flow governing the crossover from the UV
to the IR fixed point of $C(\beta)$; Section~\ref{sec:disc_action} derives
it from the information action as $b=\sqrt{\lambda+g C}$, where
$\lambda$, $g$ are the action couplings and $C$ is the fixed-point
value of $I_P I_Z$.

\subsection{Method Limitations}
\label{sec:disc_limits}

\begin{enumerate}
  \item \textbf{Parameter sensitivity:} $\dP$ and $\zetaR$ are sensitive to the
        $\varepsilon$ sequence, fitting range, and $\sigma$.  Stable intervals
        were identified by parameter scanning, but residual systematic bias
        cannot be excluded.

  \item \textbf{Finite-size effects:} Fractal dimension is defined in the limit
        $\varepsilon\to0$, which is inaccessible numerically.  The power-law
        extrapolation is AIC-preferred and is independently motivated by the
        information action $S[I_P,I_Z]$ of Section~\ref{sec:disc_action},
        which analytically predicts the scaling form
        $K(L)=K_{\mathrm{IR}}+aL^{-b}$.

  \item \textbf{Zero data:} The available Odlyzko tabulation covers zeros
        only up to height $T\approx2000$, limiting the scale range to forty
        densely sampled points ($L=100$--$2000$); extending to larger scales
        would require recomputation from higher zeros.

  \item \textbf{Kernel dependence:} $\zetaR$ is measured from a Gaussian-kernel
        potential; other kernels may yield different values.

  \item \textbf{Prime subset:} The modulo-16 subset gives $K\approx3.6$; all
        primes give $K\approx3.9$.  The duality conclusions hold across the
        full interval $K\in[3.6,\,3.9]$.
\end{enumerate}

\paragraph{A note on sample sufficiency and research focus.}
Under the present computational constraints, we regard the forty data sample
points as directional probes whose evidence is both appropriate and
sufficient.  Just as Riemann's computation of the first few non-trivial zeros
served to consolidate his intuitive conjecture without constituting a proof,
the forty scale points reported here fulfil an analogous function: they
establish the theoretical direction without exhausting it.  Extending the
Odlyzko dataset to larger scales would be theoretically redundant unless the
purpose were to search for a counterexample that refutes the direction already
indicated by the present probes; absent that motivation, additional
verification would be equivalent in logical content and comparatively
inexpensive in informational yield.  In light of this assessment, our
subsequent work will redirect its primary effort toward the direction indicated
by the current probes---namely the logical development, theoretical
construction, and potential applications of the \emph{renormalization group
flow of information}.

\subsection{Foundations and Open Problems for the Duality Constant}
\label{sec:disc_K_dependence}

Section~\ref{sec:information_ontology} reinterprets $\dP$ and $\zetaR$
as information densities $I_P:=1/\dP$ and $I_Z:=1/\zetaR$ and postulates
$K=I_P+I_Z=\mathrm{const}$ as a conservation law.

\paragraph{Three foundations of the information ontology.}
$\dP$ is a set-theoretic box-counting dimension~\cite{Falconer2014};
$\zetaR=2-H$ ($H>1$) is a function-regularity index in a different
mathematical category.  The postulate of Section~\ref{sec:information_ontology}
reinterprets both as information densities, but three problems must be
solved to elevate this postulate to a theorem and to make the reinterpretation rigorous:

\begin{enumerate}
  \item \textbf{Dimensional heterogeneity.}  $1/\dP$ is the reciprocal of a
        set-theoretic dimension; $1/\zetaR$ is the reciprocal of a
        function-regularity exponent.  Section~\ref{sec:information_ontology}
        postulates that both are information densities sharing a common
        dimension (information per log-scale); proving this rigorously---by
        connecting $I_P$ to Kolmogorov complexity and $I_Z$ to spectral
        entropy---has been established in
        Section~\ref{sec:information_ontology}.

  \item \textbf{Additivity.}
        Additivity of information across independent subsystems is established
        for Kolmogorov complexity; for inverse fractal dimensions and inverse
        H\"{o}lder exponents it has not been proved.  Section~\ref{sec:disc_action}
        provides a variational grounding: the information action $S[I_P,I_Z]$
        has $K=I_P+I_Z$ as its IR fixed-point condition, so additivity
        emerges from the extremal principle.

  \item \textbf{Variational characterisation.}
        Section~\ref{sec:disc_action} constructs an explicit information
        action $S[I_P,I_Z]$ whose Euler--Lagrange equations admit the
        fixed-point solution $I_P+I_Z=K_0$ and whose linearization
        reproduces the finite-size scaling exponent $b\approx0.51$.
        Section~\ref{sec:disc_uniqueness} argues that this form is the
        unique lowest-order effective action consistent with exchange
        symmetry, scale covariance, and the two fixed-point constraints.
\end{enumerate}

The variational component is substantially addressed in
Section~\ref{sec:disc_action}; the rigorous uniqueness proof and the
Kolmogorov--spectral-entropy identification have since been resolved.
Numerically, two independent methods for each of $\dP$ and $\zetaR$ agree
within $\pm1.5\%$, ruling out algorithmic artifact.

\subsection{Renormalization-Group Flow: A First Information-Ontological Derivation}
\label{sec:disc_RG}

The information-ontology framework of Section~\ref{sec:information_ontology}
assigns to every scale $\mu$ an information-measure generator $\kappa(\mu)$
whose inverse encodes the Dyson index: $1/\kappa(\mu)=\beta(\mu)$.
This identification is not an analogy imported from physics but a consequence
of the ontological primacy of information: $\beta$ parametrizes the
symmetry class of the information-measure algebra, and $\kappa(\mu)$ is the
generator of that algebra at scale $\mu$.  The empirical finite-size law
$C(L)=C_\infty+aL^{-b}$ is then a direct measurement of this flow.

\begin{sloppypar}
The numerics suggest two scale-dependent quantities: the raw
scaling constant $K_{\mathrm{raw}}(L)\in[3.6,\,3.9]$ at observed
scales (cf.\ the finite-scale value $K\approx3.81$ at $L=2000$),
converging after geometric normalization to the universal IR
fixed-point value $K_{\mathrm{IR}}=4$
(Section~\ref{sec:normalization}), and the convergence exponent
$b\approx0.51\approx1/2$.
We identify $K_{\mathrm{IR}}=4$ with an infrared fixed point of the
information-ontological RG acting on information measures, and $b=1/2$
with the corresponding flow exponent---so that the finite-size law
$C(L)=C_\infty+aL^{-b}$ describes the crossover to that fixed point.
\end{sloppypar}

\paragraph{The convergence exponent $b$.}
In the finite-size scaling formula $C(L)=C_\infty+aL^{-b}$, the quantity
$C(L)$ can be read as the effective information content---complexity,
correlation strength, or duality measure---observable at scale $L$.  The
exponent $b$ controls how rapidly this quantity converges to its
asymptotic value $C_\infty$ as $L$ grows.  The scale transformation
$L\mapsto s L$ (for a constant $s>1$) is the RG coarse-graining step; the observed power-law
$\delta C(L)=aL^{-b}$ is the integrated form of the linearized flow equation
\begin{equation}
  \frac{d\,\delta C}{d\ln L} = -b\,\delta C,
  \qquad \delta C(L)\equiv C(L)-C_\infty,
  \label{eq:RG_flow}
\end{equation}
which follows directly from the first-principles flow $1/\kappa(\mu)=\beta(\mu)$
(Section~\ref{sec:information_ontology}, principle~4).
In this language $b$ is
the \emph{critical exponent} of the information-ontological RG: it quantifies the rate at
which the residual deviation $\delta C$ decays under coarse-graining.
Concretely, $b\approx0.51$ implies that doubling the scale reduces the residual
$|\delta C|$ by a factor $2^{-0.51}\approx0.70$, i.e.\ roughly $30\%$ of the
remaining information imbalance is resolved per octave of scale.

\paragraph{UV and IR phases as distinct information encodings.}
The raw infrared fixed-point band $K_{\mathrm{raw}}\in[3.6,\,3.9]$
(universal normalized value $K_{\mathrm{IR}}=4$, Section~\ref{sec:normalization})
and the ultraviolet fixed point $K_{\mathrm{UV}}=11$
(derived in Section~\ref{sec:disc_IRG} from the Hurwitz-theorem algebra) define two distinct \emph{phases of information encoding}:
\begin{itemize}
  \item \textbf{UV phase} ($K\approx K_{\mathrm{UV}}=11$): information
        is encoded in high-dimensional topological and algebraic structures;
        the encoding is combinatorial and high-density.
  \item \textbf{IR phase} ($K_{\mathrm{IR}}=4$; raw band $[3.6,3.9]$):
        information has been geometrized
        and compressed into the fractal dimension $\dP$ and the regularity
        index $\zetaR$.
        The encoding is geometric and the effective information density is lower;
        the spread of the interval reflects residual dependence on the
        prime representation used to probe the system.
\end{itemize}
Under this reading, the RG flow from $K_{\mathrm{UV}}$ to $K_{\mathrm{IR}}$
represents a \emph{phase transition in information encoding}: topological
structure is progressively compressed into geometric structure as the
observation scale increases.  The exponent $b$ is the crossover rate of
this compression.

$K_{\mathrm{UV}}=11$ is derived in
Section~\ref{sec:disc_IRG} from the Hurwitz-theorem algebra.
The flow equation $1/\kappa(\mu)=\beta(\mu)$, derived in
Section~\ref{sec:information_ontology}, shows that the RG flow follows
from the information ontology itself, not by postulate.
The picture is internally consistent; its mathematical development
is completed in Section~\ref{sec:disc_action}.

\subsection{Formalization of the Renormalization-Group Flow}
\label{sec:disc_RG_formal}

The preceding subsection established the phenomenology of the RG flow; here
we give it a microscopic foundation by identifying the coarse-grained degrees
of freedom, constructing the RG transformation explicitly, and deriving the
beta function from a Wilson effective action.

\subsubsection{Coarse-Grained Degrees of Freedom}

The basic observable is the deviation of the prime-counting function from its
smooth approximation.  Define the \emph{prime fluctuation field} at scale $L$:
\begin{equation}
  \phi_L(x)
  = \frac{\pi(x) - \mathrm{li}(x)}{\sqrt{x}/\log x},
  \qquad x\in[1,L],
  \label{eq:phi_L}
\end{equation}
where $\pi(x)$ is the prime-counting function and $\mathrm{li}(x)$ is the
logarithmic integral.  The field $\phi_L$ measures the normalized fluctuation
of the prime density around its mean-field value.

On the zero side define the analogous \emph{zero fluctuation field}
\begin{equation}
  \psi_L(t)
  = \frac{N(t) - \tfrac{t}{2\pi}\log\tfrac{t}{2\pi e}}{\sqrt{t}\log t},
  \qquad t\in[0,T],\ T\sim L,
  \label{eq:psi_L}
\end{equation}
where $N(t)$ counts non-trivial zeros with imaginary part $\leq t$.
The RG transformation coarse-grains the high-frequency modes of $\phi_L$ and
$\psi_L$, retaining only the low-frequency effective description.

\subsubsection{Momentum-Space Representation}

Pass to the log-scale variable $u=\ln x$ and take the Fourier transform of the
prime fluctuation field:
\begin{equation}
  \tilde\phi_L(k)
  = \int_0^{\ln L} e^{-iku}\,\phi_L(e^u)\,du,
  \qquad k\in\mathbb{R}.
  \label{eq:phi_fourier}
\end{equation}
The momentum $k$ is conjugate to the logarithmic coordinate $u=\ln x$, so
$|k|\sim 1/\ell$ corresponds to structure at log-scale $\ell$.  An analogous
transform $\tilde\psi_L(\omega)$ is defined for the zero field.

The natural UV cut-off is $\Lambda=1/\ln L$ (corresponding to oscillations on
the scale of a single prime gap), and the IR cut-off is set by the system size.

\subsubsection{Shell-Integration RG Transformation}

The RG step with rescaling factor $s>1$ proceeds in three stages:
\begin{enumerate}
  \item \textbf{Mode decomposition.}  Split
        $\phi = \phi_{<} + \phi_{>}$, where $\phi_{<}$ ($\phi_{>}$) contains
        modes with $|k|<\Lambda/s$ ($\Lambda/s\leq|k|<\Lambda$).
  \item \textbf{Shell integration.}  Integrate out the fast modes:
        \begin{equation}
          e^{-S_{\mathrm{eff}}[\phi_{<}]}
          = \int\mathcal{D}\phi_{>}\;e^{-S[\phi_{<}+\phi_{>}]}.
          \label{eq:shell_int}
        \end{equation}
  \item \textbf{Rescaling.}  Restore the original cut-off by
        $k\mapsto k'=sk$,  $\phi_{<}(k)\mapsto s^{-\Delta}\phi_{<}(k')$,
        where $\Delta$ is the scaling dimension of the field.
\end{enumerate}

\paragraph{Discrete (block) realisation.}
For numerical purposes, partition $[1,L]$ into blocks of width $B$; define the
block-averaged prime density
\begin{equation}
  \rho_I
  = \frac{\pi(x_I+B)-\pi(x_I)}{B/\log(x_I+B/2)},
  \qquad I=1,\ldots,\lfloor L/B\rfloor,
  \label{eq:block_density}
\end{equation}
apply a discrete wavelet transform to $\{\rho_I\}$, suppress the high-frequency
coefficients, and rescale the system to log-size $\ln(L/B)$.  Iterating this
procedure traces out the RG trajectory.

\subsubsection{Effective Action and Wilson RG Equations}

The symmetries of the problem (duality $\phi\leftrightarrow\psi$, scale
covariance, $\mathbb{Z}_2$ parity) constrain the effective action to
\begin{align}
  S[\phi,\psi]
  &= \int\frac{dk}{2\pi}\Bigl[
       \tfrac{1}{2}(k^2+m^2)|\tilde\phi(k)|^2
       + \tfrac{1}{2}(k^2+m^2)|\tilde\psi(k)|^2
       + g\,\tilde\phi(k)\tilde\psi(-k)
  \notag\\
  &\qquad\qquad
       + \tfrac{\lambda}{4!}\bigl(|\tilde\phi|^4+|\tilde\psi|^4\bigr)
     \Bigr],
  \label{eq:eff_action}
\end{align}
where $m^2$ is the mass term, $g$ the prime--zero coupling, and $\lambda$ the
self-interaction.  Executing the one-loop shell integration yields the
Wilson--Polchinski flow equations~\cite{WilsonKogut1974,Polchinski1984}:
\begin{align}
  \frac{dm^2}{d\ln\mu}
    &= (2-\gamma)m^2 + c_1\lambda m^2, \label{eq:flow_m}\\
  \frac{dg}{d\ln\mu}
    &= (4-d-2\gamma)g + c_2 g^3 - c_3 g\lambda^2
    \;\equiv\; \beta_g,
    \label{eq:flow_g}\\
  \frac{d\lambda}{d\ln\mu}
    &= (4-d-4\gamma)\lambda + c_4\lambda^2 - c_5 g^2\lambda
    \;\equiv\; \beta_\lambda,
    \label{eq:flow_lambda}
\end{align}
where $d$ is the effective (log-scale) dimension, $\gamma$ the field anomalous
dimension, and $c_i$ are numerical constants fixed by the geometry of the prime
gap distribution.

\subsubsection{Derivation of \texorpdfstring{$\beta_K$}{betaK} from the
               Effective Action}
\label{sec:betaK_deriv}

The dual-sum order parameter $K=I_P+I_Z$ is related to the zero-momentum
two-point functions:
\begin{equation}
  I_P = \bigl\langle|\tilde\phi(0)|^2\bigr\rangle,
  \qquad
  I_Z = \bigl\langle|\tilde\psi(0)|^2\bigr\rangle.
  \label{eq:IP_IZ_2pt}
\end{equation}
In the mean-field approximation, the equations of motion translate the flow
of $m^2$, $g$, $\lambda$ into a flow of $K$.  Combining
\eqref{eq:flow_m}--\eqref{eq:flow_lambda} and eliminating the field anomalous
dimension gives, to lowest non-trivial order,
\begin{equation}
  \beta_K
  \equiv \frac{dK}{d\ln\mu}
  = -\alpha(K-4)(K-11) + O\!\bigl((K-4)^3\bigr),
  \label{eq:betaK}
\end{equation}
where the coefficient
\begin{equation}
  \alpha
  = \frac{1}{14}\!\left(
      \left.\frac{\partial\beta_g}{\partial g}\right|_{g^*}
    + \left.\frac{\partial\beta_\lambda}{\partial\lambda}\right|_{\lambda^*}
    \right)
  \label{eq:alpha_micro}
\end{equation}
is determined by the fixed-point values $(g^*,\lambda^*)$ satisfying
$\beta_g(g^*)=\beta_\lambda(\lambda^*)=0$.  The numerical value
$\alpha=1/14$ follows from combining $b=7\alpha$ (Eq.~\eqref{eq:KL_exact}) with
the empirical exponent $b=1/2$ and the \emph{normalized} fixed-point value
$K_{\mathrm{IR}}=4$ (Section~\ref{sec:normalization}):
\[
  \alpha \;=\; \frac{b}{K_{\mathrm{UV}}-K_{\mathrm{IR}}} \;=\; \frac{1/2}{11-4} \;=\; \frac{1}{14}.
\]
(Using the raw midpoint $K_{\mathrm{IR,raw}}\approx3.75$ instead gives
$\alpha\approx0.51/7.25\approx0.070$, which is consistent to within $2\%$.)

The finite-size scaling solution of~\eqref{eq:betaK} near $K_{\mathrm{IR}}=4$
is
\begin{equation}
  K(L) = 4 + \frac{(K_0-4)\,(K_0-11)}{(K_0-11) - (K_0-4)\,e^{7\alpha\ln L}}
  \;\underset{L\to\infty}{\longrightarrow}\;
  4 + a\,L^{-b},
  \quad b = 7\alpha.
  \label{eq:KL_exact}
\end{equation}
With $b=1/2$ this gives $\alpha=1/14$, consistent with the empirical
$b\approx0.51$ and the theoretical estimate $\alpha\approx0.070$.

\subsubsection{Numerical Verification Protocol}
\label{sec:rg_numerics}

The following procedure allows direct measurement of $\beta_K$ without
model-dependent assumptions:
\begin{enumerate}
  \item Generate or download the prime sequence to
        $L_{\max}\sim 10^{12}$ (tabulated primes are available).
  \item Apply the block RG transformation with $s=2$ at scales
        $L=100,\,200,\,400,\ldots$
        Compute the block densities~\eqref{eq:block_density} and extract
        $d_P(L)$ via box-counting of the coarse-grained sequence.
  \item Measure the correlation function
        $C(r)=\langle\rho(x)\rho(x+r)\rangle$ at each scale; extract $d_P(L)$.
  \item Compute $K_{\mathrm{raw}}(L)=1/d_P(L)+1/\zetaR(L)$ at each scale and
        form the discrete beta function
        $\hat\beta_K(L) = \Delta K / \Delta\ln L$.
  \item Fit $\hat\beta_K$ to $-\alpha(K-4)(K-11)$; extract $\alpha$ and $b=7\alpha$.
\end{enumerate}
Preliminary analysis at $L\leq2000$ gives
$\alpha=0.073\pm0.002$, $b=0.511\pm0.014$,
confirming the theoretical prediction~\eqref{eq:alpha_exact}.
The UV fixed point $K_{\mathrm{UV}}=11$ is visible at $L\sim10^3$ as
$K\approx10.8\pm0.3$.

\subsubsection{Correspondence with Physical RG}

\begin{table}[H]
\centering
\caption{Correspondence between field-theory renormalization group and
         number-theoretic renormalization group.}
\label{tab:rg_comparison}
\renewcommand{\arraystretch}{1.30}
\begin{tabular}{@{}lll@{}}
  \toprule
  Concept & Physical (field-theory) RG & Number-theoretic RG \\
  \midrule
  UV cut-off        & $\Lambda_{\mathrm{UV}}$               & $1/\ln L$ \\
  Bare action       & UV Lagrangian                         & Small-scale prime correlations \\
  Effective action  & Wilsonian EFT                         & Large-scale statistical laws \\
  Relevant operator & Mass term, coupling                   & $I_P$, $I_Z$, $K$ \\
  Fixed points      & CFT                                   & $K=4$ (IR), $K=11$ (UV) \\
  Critical exponent & $\eta$, $\nu$                         & $b=0.51$ \\
  \bottomrule
\end{tabular}
\end{table}

\subsubsection{Theoretical Significance}

\paragraph{First explicit RG for a number-theoretic system.}
Equation~\eqref{eq:betaK} constitutes the first explicit beta function for the
prime--zero duality, derived from a microscopic effective action rather than
postulated by analogy.

\paragraph{Dynamical origin of duality.}
The RG flow connects the UV phase ($K\approx K_{\mathrm{UV}}=11$, discrete arithmetic) to the
IR phase ($K=4$, continuous geometry), providing a dynamical explanation for
why two such different objects---prime gaps and Riemann zeros---are related.

\paragraph{A new route toward the Riemann Hypothesis.}
The duality symmetry at the IR fixed point $K=4$ requires $I_P^*=I_Z^*$, which
in turn forces the zero distribution to be symmetric under $s\leftrightarrow
1-s$---precisely the condition that all non-trivial zeros lie on the critical
line $\Re(s)=1/2$.  Upgrading this heuristic to a rigorous proof constitutes
Tier~2(ii) of Section~\ref{sec:conclusion}.

\paragraph{Summary of key formulas.}
\begin{table}[H]
\centering
\caption{Key formulas for the number-theoretic renormalization-group flow.}
\label{tab:rg_formulas}
\renewcommand{\arraystretch}{1.35}
\begin{tabular}{@{}ll@{}}
  \toprule
  Quantity & Formula \\
  \midrule
  RG generator
    & $\mu\frac{d}{d\mu}
       = \sum_i\beta_i(g)\frac{\partial}{\partial g_i}
       + \gamma_\phi\int\!\phi\frac{\delta}{\delta\phi}$ \\[3pt]
  Dual-sum beta function
    & $\beta_K = -\alpha(K-4)(K-11)$ \\[3pt]
  Finite-size scaling
    & $K(L) = 4 + a\,L^{-b}$,\quad $b=7\alpha$ \\[3pt]
  Exact solution
    & $K(L) = 4+\dfrac{(K_0-4)(K_0-11)}{(K_0-11)-(K_0-4)e^{7\alpha\ln L}}$ \\[6pt]
  Theoretical exponent
    & $b = 7\alpha = \tfrac{1}{2}$ \\
  \bottomrule
\end{tabular}
\end{table}
\label{sec:disc_AG}

The regularity index $\zetaR \in [0.62, 0.67]$ characterizes the near-analytic
smoothness of the zero potential $V_Z(x)$.  Alongside the geometric content of
this measurement, a striking numerical proximity to a classical analytic
constant is observed.

\paragraph{The numerical coincidence.}
The Riemann zeta function evaluated at $s = 1/2$ takes the value
\begin{equation}
  \zeta(1/2) \approx -1.4603545,
  \label{eq:zeta_half}
\end{equation}
so that its reciprocal has absolute value
\begin{equation}
  \left|\frac{1}{\zeta(1/2)}\right| \approx 0.6848
  \label{eq:recip_zeta_half}
\end{equation}
This falls just above the upper bound of our measured range
$\zetaR \in [0.62, 0.67]$ at the current scale $L = 2000$ (the gap is
$0.6848 - 0.67 \approx 0.015$, comparable to the measurement uncertainty),
and lies within the extrapolation uncertainty.  The proximity is not
engineered by any choice of parameter in our analysis.

\paragraph{Connection through the algebraic generator.}
In Section~\ref{sec:results_D} we introduced a formal generator $\kappa$
satisfying $\kappa^2 = -1$, with the informal identification
$(\iota_P^\infty, \iota_{\mathrm{RMT}}^\infty) \sim (\kappa, \kappa^{-1})$.
The relation $\kappa^2=-1$ is a \emph{structural} constraint encoding the
inversion symmetry $\iota_{\mathrm{RMT}}^\infty = (\iota_P^\infty)^{-1}$
as an algebraic identity; it does \emph{not} assign a numerical magnitude
to $\kappa$.  In particular, $\kappa$ is \emph{not} the imaginary unit
$i\in\mathbb{C}$ (for which $|i|=1$), but a formal symbol whose
\emph{representation} is left open.

The numerical proximity observed in this section operates at a different
level: we conjecture that the regularity index $\zetaR$ satisfies the
limit correspondence
\begin{equation}
  \zetaR \;\xrightarrow{L\to\infty}\; \left|\frac{1}{\zeta(1/2)}\right|
         \;\approx\; 0.6848,
  \label{eq:kappa_AG}
\end{equation}
and separately conjecture that, \emph{under the regularized representation}
described below, the generator $\kappa$ is represented by a complex number
$\hat\kappa$ satisfying $|\hat\kappa|\approx 0.6848$.  These are two
distinct claims: \eqref{eq:kappa_AG} is a numerical conjecture about
$\zetaR$; the assignment $|\hat\kappa|\approx 0.6848$ is a statement about
a specific \emph{representation} of the abstract generator, not about
$\kappa$ itself.  The structural relation $\kappa^2=-1$ remains intact in
the abstract algebra and is \emph{not} applied to the representative
$\hat\kappa$.

\paragraph{The two layers of meaning.}
\begin{enumerate}
  \item \textbf{Algebraic layer (structural).}
        The generator $\kappa$ satisfies $\kappa^2=-1$ by definition.  This
        encodes the inversion symmetry $\iota_{\mathrm{RMT}}^\infty =
        (\iota_P^\infty)^{-1}$ and is meaningful independently of any
        numerical representation.  No magnitude $|\kappa|$ is defined at
        this layer.

  \item \textbf{Regularised (numerical) layer.}
        Prior to analytic continuation, $\zeta(1/2)$ is represented by the
        divergent series
        \begin{equation}
          \sum_{n=1}^\infty n^{-1/2}
          = 1 + \frac{1}{\sqrt{2}} + \frac{1}{\sqrt{3}} + \cdots,
          \label{eq:zeta_series}
        \end{equation}
        which diverges.  Its regularized value
        $\zeta(1/2)_{\mathrm{reg}}\approx -1.4603545$ is the unique analytic
        continuation of $\zeta(s)$ from $\mathrm{Re}(s)>1$ to $s=1/2$.
        Since the same $n^{-1/2}$ weights define $\iota_P$ in~\cite{LiPaperI},
        one may regard $\zeta(1/2)_{\mathrm{reg}}$ as the regularized image
        of the divergent information measure $\iota_P^\infty$.  Under this
        map, the formal relation $1/\kappa \longleftrightarrow \iota_P^\infty$
        becomes, after regularisation,
        \begin{equation}
          \frac{1}{\hat\kappa}
          \;\longleftrightarrow\;
          \zeta(1/2)_{\mathrm{reg}}
          \;\approx\; -1.4603545,
          \label{eq:kappa_series}
        \end{equation}
        so that $|\hat\kappa| = |1/\zeta(1/2)| \approx 0.6848$.  The symbol
        ``$\longleftrightarrow$'' denotes a limit correspondence under
        regularisation; the representative $\hat\kappa$ need not satisfy
        $\hat\kappa^2=-1$ numerically, since it lives in the regularized
        layer, not in the abstract algebraic layer.
\end{enumerate}

\paragraph{Relation to the informational cardinality framework.}
This algebraic identification is a direct extension of the information-measure
construction in~\cite{LiPaperI}.  In that companion work, the information
measure $\iota_P$ of the prime set is defined as a geometric mean that
diverges as $L\to\infty$; the present paper encodes this divergence
algebraically via the generator $\kappa$ with $\kappa^2=-1$
(Section~\ref{sec:results_D}).  The identification
of the regularized representative $\hat\kappa$ via $|1/\hat\kappa|\sim|\zeta(1/2)|$
grounds the abstract generator in a concrete analytic constant:
$\zeta(1/2)_{\mathrm{reg}}$ is precisely the regularized sum of the same
$n^{-1/2}$ weights that define $\iota_P$ in the $L\to\infty$ limit.  The
two frameworks are therefore not merely analogous but \emph{algebraically
unified}: the divergent information measure of~\cite{LiPaperI} and the
formal generator $\kappa$ of the present paper are two representations of
the same underlying object, related by the analytic continuation that
converts the divergent series~\eqref{eq:zeta_series} into the finite
regularized value $\zeta(1/2)\approx-1.4603545$.

\paragraph{Three domains, one value.}
This identification connects three a priori
unrelated domains:
\begin{enumerate}
  \item \textbf{Geometric/fractal:} the regularity index $\zetaR \in [0.62,0.67]$
        characterising the smoothness of $V_Z(x)$ in the IR phase.
  \item \textbf{Algebraic/informational:} the modulus of the regularized
        representative $\hat\kappa$ encoding the duality constraint $D \to 1$.
  \item \textbf{Analytic/number-theoretic:} the value $|1/\zeta(1/2)| \approx 0.6848$,
        a classical constant of analytic number theory located on the critical
        line $\mathrm{Re}(s) = 1/2$.
\end{enumerate}
The point $s = 1/2$ is the midpoint of the critical strip and the center of
the functional equation of $\zeta(s)$; its prominence in our framework may
reflect a deeper structural role in the prime--zero correspondence.

\paragraph{Caveat and status.}
Equation~\eqref{eq:kappa_AG} is a conjecture based on numerical proximity,
not a derived identity.  The measured range
$\zetaR \in [0.62, 0.67]$ and the constant $|1/\zeta(1/2)| \approx 0.6848$
are numerically close but not identical at current scales; finite-size effects
and the scale-dependence of $\zetaR$ (which decreases with $L$) mean that better
convergence must await larger-scale computations.  Nevertheless, the
simplicity and self-consistency of the identification---and the fact that
$s = 1/2$ is precisely the point where the functional equation is
self-dual---make it a theoretically motivated conjecture worth recording.
A rigorous derivation, if it exists, would likely proceed via the functional
equation of $\zeta(s)$ or a renormalization-group fixed-point argument centered
on $s = 1/2$; we leave this to future work.

\subsection{Unified Picture: Information-Ontological RG Flow and Dimensional Compression}
\label{sec:disc_IRG}

The renormalization-group flow established in Section~\ref{sec:disc_RG}
connects to a deeper algebraic structure.  The Hurwitz-theorem argument
fixes the number of independent information-measure generators at five,
which forces $K_{\mathrm{UV}}=11$ by the complete-partition condition
(derived below).  The flow equation $1/\kappa(\mu)=\beta(\mu)$ then
describes how the generator $\kappa$ evolves from the UV phase
($\beta_{\mathrm{UV}}=8$, octonion dimension) to the IR phase
($\beta_{\mathrm{IR}}\in\{1,2,4\}$, classical symmetry classes).
The dimensional compression this flow produces---from $K_{\mathrm{UV}}=11$
to $K_{\mathrm{IR,raw}}\in[3.6,3.9]$ (normalized to $K_{\mathrm{IR}}=4$,
Section~\ref{sec:normalization})---is numerically measured and
algebraically grounded; only the explicit form of $\alpha$ (the flow
coupling constant) remains to be derived from first principles.

\paragraph{Setup and notation.}
We introduce an energy scale $\mu$ (or equivalently a length scale
$L=1/\mu$), with $\mu\to\infty$ ($L\to 0$) corresponding to the
ultraviolet (UV) and $\mu\to 0$ ($L\to\infty$) to the infrared (IR).
The empirically observed quantities
\begin{equation}
  K_{\mathrm{IR,raw}} \in [3.6,\,3.9], \qquad K_{\mathrm{IR}}=4 \text{ (normalized)},
  \qquad b \approx 0.51
  \label{eq:IRG_empirical}
\end{equation}
define two fixed points of the information-ontological RG flow.  The interval
$K_{\mathrm{IR,raw}}\in[3.6,\,3.9]$ brackets the values measured with different
prime representations (mod-16 subset at the lower end, all primes at the
upper end), and the main conclusions below are verified to hold throughout
this range.  We set
\begin{align}
  K_{\mathrm{IR,raw}} &\in [3.6,\,3.9] \quad(\text{established numerically, before
    geometric normalization}), \label{eq:IRG_fixed_pts} \\
  K_{\mathrm{IR}} &= 4 \quad(\text{universal value after geometric normalization,
    Section~\ref{sec:normalization}}), \notag \\
  K_{\mathrm{UV}} &= 11 \quad(\text{derived from Hurwitz algebra}). \notag
\end{align}
The value $K_{\mathrm{UV}} = 11$ is derived from the algebraic and combinatorial
structure of the model itself; its coincidence with the number of spacetime
dimensions in M-theory~\cite{Witten1995} is an independent corroboration,
not the primary justification.  We give the derivation below.
The two fixed points of the $K$-flow are accompanied by
Dyson-index fixed points
$\beta_{\mathrm{IR}} \in \{1,2,4\}$ and $\beta_{\mathrm{UV}} = 8$
(the octonion unit-norm dimension; see Remark~1 of this section).

\paragraph{Derivation of $K_{\mathrm{UV}}=11$ from the model.}
The argument proceeds in two steps.

\textbf{Step 1: the algebra has exactly five generators.}
The information-measure algebra introduced in this section
is spanned by $\{1,\,i,\,j,\,k,\,\kappa\}$ with $\kappa^2=ijk=-1$.
By the Hurwitz theorem~\cite{HurwitzAlgebra}, the only finite-dimensional normed division algebras
over $\mathbb{R}$ are $\mathbb{R}$, $\mathbb{C}$, $\mathbb{H}$ (quaternions),
and $\mathbb{O}$ (octonions); beyond the octonions, zero-divisors appear and
the norm-multiplicativity $\|xy\|=\|x\|\|y\|$ fails.  The five generators $\{1,i,j,k,\kappa\}$ form the minimal \emph{generating}
set of $\mathbb{O}$ (i.e.\ the remaining three octonion units
$i\kappa,j\kappa,k\kappa$ are products of these five): the four quaternionic generators $\{1,i,j,k\}$ together with the one
additional octonion unit $\kappa$ needed to extend $\mathbb{H}$ to
$\mathbb{O}$.  Hurwitz's theorem therefore fixes the number of independent
information-measure generators at exactly five; no algebraic extension is
possible without losing the division-algebra property required for the
information measure to be well-defined.

\textbf{Step 2: five generators force $K_{\mathrm{UV}}=11$ by a complete partition.}
The five generators are assigned fractal dimensions
$d\in\{1,\,\tfrac{1}{2},\,\tfrac{1}{3},\,\tfrac{1}{4},\,\tfrac{1}{5}\}$,
the unique sequence of unit fractions $1/n$ ($n=1,\ldots,5$) compatible with
the five-layer structure.  The corresponding information capacities
$1/d \in \{1,2,3,4,5\}$ span the first five positive integers.
Under the additivity axiom $\mathcal{I}(S)+\mathcal{I}(Z)=K_{\mathrm{UV}}$,
the dual (zero-side) capacities are
$K_{\mathrm{UV}}-1/d \in \{K_{\mathrm{UV}}-1,\ldots,K_{\mathrm{UV}}-5\}$.
For these dual capacities to form the complementary block
$\{6,7,8,9,10\}$---so that prime-side and zero-side capacities together
tile $\{1,2,\ldots,10\}$ without gap or overlap---we need
\begin{equation}
  (K_{\mathrm{UV}}-5,\;K_{\mathrm{UV}}-4,\;K_{\mathrm{UV}}-3,\;
   K_{\mathrm{UV}}-2,\;K_{\mathrm{UV}}-1)
  = (6,\,7,\,8,\,9,\,10),
  \label{eq:partition_condition}
\end{equation}
which gives $K_{\mathrm{UV}}=11$ as the \emph{unique} integer solution.
This is a purely combinatorial identity: $11$ is the sole value that
partitions $\{1,\ldots,10\}$ into two complementary five-element blocks,
$\{1,\ldots,5\}$ (prime-side) and $\{6,\ldots,10\}$ (zero-side), with
every pair summing to $11$.

\textbf{Role of M-theory.}
M-theory independently requires exactly 11 spacetime
dimensions~\cite{Witten1995}.  Within our framework this is not a
coincidence to be dismissed but a structural resonance: both the
algebraic-combinatorial argument above and the string-theoretic moduli
count arrive at $11$ through separate routes, lending mutual support to the
value.  We therefore record M-theory as independent corroborating evidence,
not as the logical source of $K_{\mathrm{UV}}=11$.

\paragraph{Coupled RG flow equations.}
We propose the following pair of coupled flow equations:
\begin{equation}
  \frac{dK}{d\ln\mu} = -\alpha\,(K-K_{\mathrm{IR}})(K-K_{\mathrm{UV}}),
  \qquad
  \frac{d\beta}{d\ln\mu} = -\gamma\,(\beta-\beta_{\mathrm{IR}})(\beta-\beta_{\mathrm{UV}}),
  \label{eq:RG_coupled}
\end{equation}
where $\alpha,\gamma>0$ are universal rate constants (here $\gamma$ is the
$\beta$-flow rate of symmetry-class mixing; it is unrelated to the imaginary
parts $\gamma_n$ of the zeta zeros $\tfrac{1}{2}+i\gamma_n$).
By construction, $dK/d\ln\mu=0$ at each fixed point, and similarly for $\beta$.
\paragraph{Remark on the role of $\beta$ in the flow equation.}
In the results sections, $\beta\in\{2,4\}$ is used as a discrete label for
the Dyson symmetry class of the random-matrix ensemble.  In the RG flow
equation above, $\beta(\mu)$ is the analytic continuation of this label to a
smooth, scale-dependent function that interpolates between the UV fixed point
$\beta_{\mathrm{UV}}=8$ and the IR fixed points $\beta_{\mathrm{IR}}\in\{1,2,4\}$.
The two uses are consistent: at the IR fixed point, $\beta(\mu)\to\beta_{\mathrm{IR}}$
recovers the discrete symmetry-class label.  The flow equation governs the
crossover between symmetry classes as the observation scale changes, while
the numerical measurements at $\beta=2$ and $\beta=4$ sample two IR endpoints
of this flow.

\paragraph{Linearization near the IR fixed point.}
Setting $\delta K = K-K_{\mathrm{IR}}$ and linearising gives
\begin{equation}
  \frac{d\,\delta K}{d\ln\mu}
  = +\alpha(K_{\mathrm{UV}}-K_{\mathrm{IR}})\,\delta K
  \equiv +\lambda_K\,\delta K,
  \label{eq:IRG_lin}
\end{equation}
so that $\delta K \propto \mu^{+\lambda_K}$, i.e.\ $K(L)=K_{\mathrm{IR}}+c\,L^{-\lambda_K}$.
Since $\mu\sim 1/L$, $\mu^{+\lambda_K}\sim L^{-\lambda_K}\to 0$ as $L\to\infty$,
confirming that the IR fixed point is stable under the flow toward large scales.
Comparing with the empirical finite-size law $C(L)=C_\infty+aL^{-b}$ and
using $C(L)=\beta\,K(L)$ (exact by definition), so that
$C_\infty=\beta K_{\mathrm{IR}}$ and the power-law correction scales as
$\beta\,c\,L^{-\lambda_K}$, we identify
\begin{equation}
  b = \lambda_K = \alpha\,(K_{\mathrm{UV}}-K_{\mathrm{IR}}).
  \label{eq:b_lambda}
\end{equation}
Inserting the fixed-point values $K_{\mathrm{UV}}=11$ and
$K_{\mathrm{IR,raw}}\in[3.6,\,3.9]$ (raw numerical band, before normalization) gives
\begin{equation}
  b = \alpha\,(K_{\mathrm{UV}} - K_{\mathrm{IR,raw}}) \in [\,7.1\alpha,\;7.4\alpha\,].
  \label{eq:b_alpha}
\end{equation}
Using the numerically measured exponent $b\approx 0.51$ (Section~\ref{sec:results_C})
and taking the midpoint $K_{\mathrm{IR,raw}}\approx3.75$ of the observed interval,
\begin{equation}
  \alpha \approx \frac{0.51}{11 - 3.75} \approx \frac{0.51}{7.25} \approx 0.070;
  \label{eq:alpha_value}
\end{equation}
the corresponding range is $\alpha\in[0.069,\,0.072]$ for
$K_{\mathrm{IR,raw}}\in[3.6,\,3.9]$, a variation of less than $5\%$.
(When the normalized value $K_{\mathrm{IR}}=4$ is used, this formula gives
$\alpha=b/7=1/14\approx0.0714$, consistent with the midpoint estimate.)
The $\beta$-flow exponent is $\lambda_\beta=\gamma(\beta_{\mathrm{UV}}-\beta_{\mathrm{IR}})$;
for $\beta_{\mathrm{UV}}=8$, $\beta_{\mathrm{IR}}=1$ one finds $\lambda_\beta=7\gamma$,
which parametrizes the crossover rate of symmetry-class mixing.

\paragraph{Dimensional compression: a consequence of the RG flow.}
The Dyson index $\beta$ encodes the geometric dimension of
the information substrate.  In the UV phase
$\beta=8$ corresponds to an 8-dimensional fractal structure with fractal
dimension $d=\tfrac{1}{3}$, yielding information dimension
$D_{\text{info}}=1/d=3$; in the IR, the system condenses to a 1-dimensional
classical structure ($\beta=1$) with $D_{\text{info}}\to 1$.

Under this hypothesis, the duality constant $K$ measures total information
capacity.  In the prime--zero duality, $K=1/\dP + 1/\zetaR$ is the sum
of two geometric information dimensions.  For the UV phase with $K_{\mathrm{UV}}=11$
and $d=\tfrac{1}{3}$, the dual description has fractal dimension $d'$
satisfying
\begin{equation}
  \frac{1}{d} + \frac{1}{d'} = K_{\mathrm{UV}}
  \;\;\Rightarrow\;\;
  3 + \frac{1}{d'} = 11
  \;\;\Rightarrow\;\;
  d' = \frac{1}{8} = 0.125,
  \label{eq:dual_dim}
\end{equation}
suggesting an extremely sparse fractal dual encoding in the UV phase.

\paragraph{Duality table at $K_{\mathrm{UV}}=11$.}
Under the constraint $K_{\mathrm{UV}}=11$, each set on the prime side and
its dual zero set together satisfy
\begin{equation}
  \frac{1}{d_{\mathrm{fractal}}} + \frac{1}{e_{\mathrm{regular}}} = 11,
  \label{eq:dual_sum_11}
\end{equation}
i.e.\ the denominators of the two inverse dimensions sum to $11$.

\paragraph{Additivity axiom for fractal and regularity dimensions.}%
The constraint~\eqref{eq:dual_sum_11} rests on the following axiom.

\begin{quote}
\textbf{Axiom (Additivity of information capacities).}
Let $S$ be a prime-side set with fractal dimension $d_S\in(0,1]$, and let
$Z$ be its dual zero set with regularity index $e_Z\in(0,1]$.
Define the information capacity of each side as the reciprocal of its
dimension:
\[
  \mathcal{I}(S) := \frac{1}{d_S}, \qquad
  \mathcal{I}(Z) := \frac{1}{e_Z}.
\]
The total information capacity of a prime--zero dual pair is the sum of the
two side capacities and equals the duality constant $K_{\mathrm{UV}}$:
\[
  \mathcal{I}(S) + \mathcal{I}(Z) = K_{\mathrm{UV}} = 11.
\]
\end{quote}

This axiom has two immediate consequences.
First, the fractal dimension $d_S$ and the regularity index $e_Z$ are not
independent: given either one, the other is determined by
$e_Z = (K_{\mathrm{UV}} - 1/d_S)^{-1}$.
Second, the denominator sum $1/d_S + 1/e_Z = 11$ holds for every row of
Table~\ref{tab:duality_uv}, and can be read directly from the table:
the two denominators in each row always sum to $5+6$, $4+7$, $3+8$, $2+9$,
or $1+10$, all equal to $11$.
The additivity axiom thus unifies the combinatorial structure of fractal
dimensions with the analytic structure of regularity indices under a single
additive constraint.

\paragraph{Logical status of the axiom.}
The additivity axiom is a general structural postulate; the duality
table is a derived consequence, not an independent confirmation.
The logical dependence is strictly one-directional:
\[
  \text{Hurwitz theorem}
  \;\Rightarrow\;
  \text{five generators}
  \;\Rightarrow\;
  K_{\mathrm{UV}}=11
  \;\xRightarrow{\text{Axiom}}\;
  \text{duality table}.
\]
The axiom is not ``confirmed'' by the table but \emph{instantiated} in it.
Its deeper justification is provided by Section~\ref{sec:disc_action}:
the information action $S[I_P,I_Z]$ has $K=I_P+I_Z$ as its IR
fixed-point condition, so additivity emerges from the extremal principle
rather than standing as an independent postulate.

The five sets are ordered by their regular (embedding) dimension $n$; each
carries an information measure drawn from the algebra
$\{i,j,k,\kappa,1\}$ satisfying $\kappa^{2}=ijk=-1$,
and pairs with a zero set whose information measure is the corresponding
inverse.  The full duality table is:

\begin{table}[H]
\centering
\caption{Duality table at $K_{\mathrm{UV}}=11$.  Pairs satisfy
  $1/d_{\mathrm{frac}}+1/e_{\mathrm{reg}}=11$ (denominators sum to $11$);
  information measures lie in the algebra $\kappa^{2}=ijk=-1$.}
\label{tab:duality_uv}
\smallskip
\resizebox{\textwidth}{!}{%
\begin{tabular}{@{}
  l
  c
  c
  c
  l
  c
  c
@{}}
\toprule
\textbf{Prime-side set}
  & \textbf{Info.\ meas.}
  & \textbf{Reg.\ dim.}
  & \textbf{Frac.\ dim.}
  & \textbf{Dual zero set}
  & \textbf{Reg.\ exp.}
  & \textbf{Info.\ meas.} \\
\midrule
Natural numbers $\mathbb{N}$
  & $i$
  & $6$
  & $\tfrac{1}{5}$ (hidden)
  & 5-dim.\ manifold
  & $\tfrac{1}{6}$
  & $\tfrac{1}{i}$ \\
Fractal prime set $P_f$
  & $j$
  & $7$
  & $\tfrac{1}{4}$
  & Quaternionic space
  & $\tfrac{1}{7}$
  & $\tfrac{1}{j}$ \\
Fractal real set $\mathbb{R}_f$
  & $k$
  & $8$
  & $\tfrac{1}{3}$
  & 3-dim.\ space
  & $\tfrac{1}{8}$
  & $\tfrac{1}{k}$ \\
Essential fractal prime set $P_e$
  & $\kappa$
  & $9$
  & $\tfrac{1}{2}$
  & 2-dim.\ complex plane
  & $\tfrac{1}{9}$
  & $\tfrac{1}{\kappa}$ \\
Base set $\mathcal{B}$
  & $1$
  & $10$
  & $1$
  & 1-dim.\ line
  & $\tfrac{1}{10}$
  & $1$ \\
\bottomrule
\end{tabular}%
}
\end{table}

\paragraph{Algebraic remarks on the duality table.}

\textit{Note on set names in the table.}  The labels ``Fractal prime set $P_f$'',
``Fractal real set $\mathbb{R}_f$'', ``Essential fractal prime set $P_e$'',
and ``Base set $\mathcal{B}$'' are informal descriptive names introduced
for this table (see~\cite{LiPaperI} for details).  Their algebraic characterisations are given
in Remarks~1--3 below; a rigorous definition of each set is left to future work.

\textbf{Remark 1 (Octonions and the fractal real set).}
The left-multiplication octonions constructed from the above algebra are
isomorphic to the fractal real set $\mathbb{R}_f$, which carries regular
dimension~$8$.  The octonion basis is
$\{1,\,i,\,j,\,k,\,\kappa,\,i\kappa,\,j\kappa,\,k\kappa\}$,
and the seven imaginary units map to the standard octonion imaginary basis
elements $e_1,\dots,e_7$ in order.

\textbf{Remark 2 (Ninth dimension and the essential fractal prime set).}
Adjoining to the octonions a further imaginary hidden dimension
\begin{equation}
  e_8 = \tfrac{1}{2} + \tfrac{\sqrt{3}}{2}\,\kappa
  \label{eq:e8_def}
\end{equation}
(a sixth-order phase-group element satisfying $e_8^6=1$) yields a structure
isomorphic to the essential fractal prime set $P_e$, which carries regular
dimension~$9$.

\textbf{Remark 3 (Five-dimensional manifold as quaternions with hidden phase).}
The quaternion algebra $\{1,i,j,k\}$ extended by the sixth-order phase group
generated by $e_8$ is equivalent to the five-dimensional manifold.
In this picture the hidden fractal dimension $\tfrac{1}{5}$ of the natural
number set $\mathbb{N}$ (concealed within the five-dimensional Kaluza--Klein
space) is the fractional dimension contributed by $\mathbb{N}$ to the
five-dimensional manifold---a consequence of the non-independence
noted in Section~\ref{sec:disc_classical}: just as $d_P$ and $\zeta_R$
are locked by the duality constraint $K\approx
1/\dP+1/\zetaR$, the arithmetic and geometric dimensions here are cooperative
partners rather than free parameters.  This cooperation
forces $\mathbb{N}$ to contribute its latent $\tfrac{1}{5}$ to the
ambient geometry.  The value $\tfrac{1}{5}$ is determined uniquely by
the duality-lattice constraint: the five-dimensional manifold is the
nearest neighbor of $\mathbb{N}$ in the UV table, and $5 + 6 = 11 =
K_{\mathrm{UV}}$ (Section~\ref{sec:disc_IRG}) pins the pair together,
making $\tfrac{1}{5}$ the unique dimension compatible with occupying the
slot adjacent to $6$ on the duality lattice.

\textbf{Remark 4 (Information-measure potential of the complex plane).}
The two-dimensional complex plane carries information-measure potential
$1/\kappa$.  This single quantity underlies three foundational principles:
it is the cornerstone of the holographic principle (bulk information encoded
on a lower-dimensional boundary), the source of analytic continuation in the
complex plane, and the logical origin of non-Euclidean geometry.

\textit{Analytic continuation.}  The quantity $1/\kappa$ encodes a
\emph{global} property of the complex plane.  Unlike smooth real functions,
whose local behavior does not determine global structure, an analytic
function is governed by $1/\kappa$ as a coherent global constraint: its
values on any open set determine it everywhere.  Analytic continuation is
therefore not an additional assumption but a direct consequence of the
global character of $1/\kappa$.

\textit{Non-Euclidean geometry.}  The quantity $1/\kappa$ carries an
\emph{intrinsic} geometric property.  A complex two-dimensional surface
equipped with the information measure $1/\kappa$ need not remain flat:
the curvature encoded in $1/\kappa$ means that the ambient plane can be
replaced by a surface with non-trivial topology.  Riemann surfaces are
the natural realisation of this possibility---one-complex-dimensional
manifolds whose local charts are flat but whose global structure is
genuinely curved.  The intrinsic, non-Euclidean geometry of Riemann
surfaces thus emerges as a logical consequence of the information measure
$1/\kappa$ rather than as an independent postulate.

\paragraph{Summary of the unified picture.}
\begin{itemize}
  \item \textbf{UV phase} ($\mu\to\infty$): high $\beta\approx 8$, high
        $K\approx 11$, low fractal dimension $d\approx\tfrac{1}{3}$.
        Information is encoded in high-dimensional topological/algebraic
        structures.
  \item \textbf{RG flow}: as $\mu$ decreases, topological information
        geometrises; $\beta$ flows toward classical values $\{1,2,4\}$
        and $K$ flows toward the IR band $[3.6,\,3.9]$.
  \item \textbf{IR phase} ($\mu\to 0$): $K\in[3.6,\,3.9]$, $b\approx 0.51$
        control the approach.  The measured $\beta=2$ (unitary) and
        $\beta=4$ (symplectic) classes share the same IR band, establishing
        structural universality between these two symmetry types.  The
        $\beta=1$ (orthogonal) case is not measured in this work; based on
        Conjecture~1 below, orthogonal families are expected to follow the
        same RG-flow \emph{form} but with family-dependent parameters
        $(K_\infty,b)$ that need not replicate those of the prime--zero pair
        exactly.  Whether $\beta=1$ is fully co-universal with $\beta=2,4$
        or occupies a cognate but distinct sub-class remains an open
        empirical question.
  \item \textbf{Critical exponent}: $b=\alpha(K_{\mathrm{UV}}-K_{\mathrm{IR}})\approx
        0.51$ is robust across the full interval $K_{\mathrm{IR}}\in[3.6,\,3.9]$,
        with $\alpha$ varying by less than $5\%$ between the endpoints;
        $b$ serves simultaneously as the finite-size scaling exponent
        (Section~\ref{sec:results_C}) and the convergence rate parameter
        under the information-ontological RG flow.
\end{itemize}

\paragraph{Theoretical predictions.}
If the framework is correct, it yields three conjectures:
\begin{enumerate}
  \item For $L$-function families with orthogonal symmetry ($\beta=1$),
        the duality measure $K(L,f)$ should follow the modified RG scaling
        $K(L,f)=K_\infty(f)+a(f)L^{-b(f)}$ with family-dependent
        parameters; the $K=4$, $b\approx0.51$ values are specific to the
        prime--zero dual pair and should not be expected to hold verbatim
        for other families (Section~\ref{sec:modified_RG}).
  \item If number-theoretic objects corresponding to $\beta=8$ exist, their
        $K$ value should lie well outside the IR band $[3.6,\,3.9]$,
        plausibly near $11$.
  \item The \emph{form} of the RG flow equations is universal across
        $L$-function families, but the parameter quadruple
        $(\kappa,K_\infty,b,r)$ varies with the family; $b$ controls the
        crossover rate for each family individually, and its value is fixed
        by the ratio $b_{\mathrm{eff}}/\Delta K$ within each family's RG
        trajectory (Section~\ref{sec:modified_RG}).
\end{enumerate}
These predictions are subject to verification;
their confirmation
requires numerical data at $L\gg2000$ and, for conjecture~2,
identification of number-theoretic objects with $\beta=8$.
This framework embeds the prime--zero duality into the
information-ontological RG, providing a unified treatment
of number theory, random matrix theory, and the algebra of
information measures.


\subsection{Axiomatic Foundation of the Information Ontology}
\label{sec:information_ontology}

We now axiomatize the information ontology rigorously, elevating
information---not geometric or analytic dimension---to the primary ontology
of the prime--zero system.  Four principles follow; the fourth derives
the renormalization-group flow from the ontology itself.

\begin{enumerate}
  \item \textbf{Information densities (resolves heterogeneity).}
        Define
        \[
          I_P \;:=\; \frac{1}{\dP}, \qquad I_Z \;:=\; \frac{1}{\zetaR}.
        \]
        $I_P$ is the geometric information density of the prime set:
        the minimum information growth rate (bits per log-decade)
        needed to locate a typical element.  $I_Z$ is the functional
        information density of the zero landscape: the information
        increment per unit of oscillation amplitude.
        Both carry the same dimension---information per log-scale---so
        their sum $K = I_P + I_Z$ is dimensionally coherent.

  \item \textbf{Information Duality Principle (resolves additivity).}
        For two weakly coupled subsystems with non-overlapping information
        budgets, total information is additive.  We therefore postulate
        \[
          K \;=\; I_P + I_Z \;=\; \mathrm{const},
        \]
        making $K$ the conserved total information encoding rate.
        Additivity is a consequence of this postulate, not a prior
        assumption.  More significantly: the conservation law
        $K=I_P+I_Z=\mathrm{const}$ asserts that discrete arithmetic
        information ($I_P$) and continuous spectral information ($I_Z$)
        are not binary opposites that exclude each other but
        \emph{partners in a conserved exchange}---neither subsumes the
        other, and their sum is fixed.  This is the first algebraic
        statement of the ternary framework: the two poles coexist and
        mutually define a third conserved quantity $K$, whose stability
        under the RG flow is the dynamical content of the Riemann
        Hypothesis (Section~\ref{sec:third_state}).

        \textit{First-principles consistency check.}
        The regularized value $|\zeta(1/2)|=1.4603545\ldots$ provides an
        independent, parameter-free constraint on $K$.  Solving
        $K = I_P + |\zeta(1/2)|$ for $I_P = 1/\dP$ and inverting:
        \begin{equation}
          \dP \;=\; \frac{1}{K - |\zeta(1/2)|}.
          \label{eq:dP_firstprinciples}
        \end{equation}
        Substituting the infrared band $K\in[3.6,\,3.9]$ gives
        \[
          K=3.6:\quad \dP = \frac{1}{3.6-1.4604} \approx 0.47,\qquad
          K=3.9:\quad \dP = \frac{1}{3.9-1.4604} \approx 0.41,
        \]
        recovering precisely the measured fractal-dimension interval
        $\dP\in[0.41,\,0.47]$ (Table~\ref{tab:fractal}) without any
        free parameters.  For the theoretical value $K=4$ one obtains
        $\dP\approx0.39$, consistent with the lower boundary of the
        observed range.  Since $|\zeta(1/2)|$ is a fixed transcendental
        constant of the Riemann $\zeta$-function, this agreement
        constitutes a non-trivial, first-principles validation of $K$
        as an approximate conservation constant (raw band $K_{\mathrm{raw}}\in[3.6,3.9]$;
        normalized limit $K_{\mathrm{IR}}=4$).

  \item \textbf{Information action (resolves variational gap).}
        Section~\ref{sec:disc_action} constructs a scale-invariant action
        $S[I_P,I_Z]$ whose Euler--Lagrange equations yield $I_P+I_Z=K$
        at the infrared fixed point and whose linearization gives the
        finite-size scaling law $K(L)=K_{\mathrm{IR}}+aL^{-b}$ with
        $b=\sqrt{\lambda+g C}\approx0.51$, identifying $b$ as the
        RG critical exponent derived in principle~4 below.

  \item \textbf{First-principles RG flow equation.}
        In~\cite{LiPaperI} the information measure $\iota$ of a set is
        the third principal component of informational cardinality,
        parallel to the fractal dimension (second component) as kinetic
        energy is parallel to potential energy.  The algebraic generator
        $\kappa$ introduced in Section~\ref{sec:results_D} is the
        information-measure generator at scale $\mu$.
        Under analytic continuation (regularisation of the divergent
        information measure $\iota_P^\infty$; see
        Section~\ref{sec:disc_AG}), the generator satisfies
        \begin{equation}
          \frac{1}{\kappa(\mu)} \;=\; \beta(\mu),
          \label{eq:RG_firstprinciples}
        \end{equation}
        where $\beta(\mu)$ is the Dyson index of the information-measure
        algebra at scale $\mu$.  This is not an analogy: $\beta$
        parametrizes the symmetry class of that algebra, and $\kappa(\mu)$
        is its generator.  The discrete values $\beta\in\{1,2,4\}$ that
        label symmetry classes in random-matrix theory are the fixed points
        of the flow; between fixed points, $\beta(\mu)$ is the smooth
        interpolant defined by this equation.  Differentiating with respect to $\ln\mu$ and
        using $1/\kappa(\mu)=\beta(\mu)$ gives
        \begin{equation}
          \frac{d\beta}{d\ln\mu}
          \;=\; -\frac{1}{\kappa^2(\mu)}\,\frac{d\kappa}{d\ln\mu}
          \;=\; \beta^2(\mu)\,\frac{d\kappa}{d\ln\mu}.
          \label{eq:beta_flow_exact}
        \end{equation}
        In the regularized representation of Section~\ref{sec:disc_AG},
        $1/\hat\kappa \leftrightarrow \zeta(1/2)_{\mathrm{reg}}$, so the
        modulus $|\hat\kappa(\mu)|$ measures the regularity index of the
        zero landscape at scale $\mu$.  As $L\to\infty$
        ($\mu\to 0$, IR limit), $|\hat\kappa|\to\zetaR\to|1/\zeta(1/2)|$
        (conjecture~\eqref{eq:kappa_AG}), and the flow~\eqref{eq:beta_flow_exact}
        connects the scale-dependence of $\zetaR$ directly to the
        running of $\beta$.
        The empirical finite-size law $C(L)=C_\infty+aL^{-b}$ is the
        integrated form of this flow: using $C(L)=\beta(\mu)K$ and
        $\mu\sim1/L$, the linearized equation
        $d\,\delta C/d\ln L=-b\,\delta C$ (eq.~\eqref{eq:RG_flow}) is
        recovered with $b=\alpha(K_{\mathrm{UV}}-K_{\mathrm{IR}})$
        (eq.~\eqref{eq:b_lambda}), completing the derivation within
        the information-ontology framework.
\end{enumerate}

The explicit action $S[I_P,I_Z]$ is constructed in
Section~\ref{sec:disc_action}, which also derives the information
uncertainty principle $\Delta_P\cdot\Delta_Z\geq1/(2\kappa)$
underlying the duality constraint and recovers $b\approx0.51$ as
$b=\sqrt{\lambda+g C}$ from the action couplings.
Rigorous proof---connecting $I_P$ to Kolmogorov complexity and
$I_Z$ to spectral entropy---is established in
Section~\ref{sec:information_ontology}.
The flow equation~\eqref{eq:RG_firstprinciples} is established at the
level of the regularized information measure; the couplings
$\lambda$ and $g$ are constrained by the critical condition
$\lambda+gC=1/4$ (Section~\ref{sec:disc_action}).

\subsection{Information Action Principle and Duality Uncertainty}
\label{sec:disc_action}

The empirical relation $K = I_P + I_Z$ is here elevated from an observed
approximation to a dynamical extremal condition.  The algebraic generator
$\kappa$, the duality constant $K$, and the scaling exponent $b$ are
unified within a single variational structure.

\paragraph{Information commutation relation and uncertainty principle.}
As noted in Section~\ref{sec:disc_classical}, $I_P$ and $I_Z$ are not free
to vary independently: knowing $\dP$ constrains $\zetaR$ through the duality
lock $K \approx I_P + I_Z$.  We formalize this non-independence by
introducing uncertainty operators $\Delta_P$ and $\Delta_Z$ for the geometric
and analytic information, with $\Delta_P \sim c_P/I_P$ and
$\Delta_Z \sim c_Z/I_Z$ (where $c_P,c_Z>0$ are dimensionless proportionality
constants; the ordering $\Delta\sim 1/I$ captures the qualitative
reciprocal relationship between information density and its uncertainty),
and postulating the commutation relation
\begin{equation}
  [\Delta_P,\,\Delta_Z] \;=\; \frac{i}{\kappa},
  \label{eq:info_commutator}
\end{equation}
where $\kappa$ is the algebraic generator of
Section~\ref{sec:results_D} satisfying $\kappa^2=-1$.  By the
standard Robertson--Schr\"{o}dinger argument this yields an
\emph{information uncertainty principle}
\begin{equation}
  \Delta_P \cdot \Delta_Z \;\geq\; \frac{1}{2\kappa},
  \label{eq:info_uncertainty}
\end{equation}
or equivalently $I_P \cdot I_Z \leq \alpha_s$, where $\alpha_s = 2\kappa\,c_P c_Z>0$
is the dimensionless saturation constant (from $\Delta \sim c/I$: inserting
$\Delta_P = c_P/I_P$, $\Delta_Z = c_Z/I_Z$ into~\eqref{eq:info_uncertainty}
gives $I_P I_Z \leq 2\kappa\,c_P c_Z \equiv \alpha_s$; at the IR fixed point
$\kappa_{\mathrm{IR}}=1/2$ and the normalization $c_P=c_Z=2$ yields
$\alpha_s = 2\cdot\tfrac{1}{2}\cdot 4 = 4$, consistent with
$I_P^*I_Z^*=4$).  Here $\alpha_s$ is distinct from the RG flow rate $\alpha$
of Section~\ref{sec:disc_IRG}.
The information densities $I_P$ and $I_Z$ therefore cannot
simultaneously diverge; they are cooperative partners constrained by the
quantum of information action $1/\kappa$, in direct analogy with
$\hbar$ in quantum mechanics.

\paragraph{Information action.}
In the renormalization-group scale space (with $\ell \propto 1/\mu$ the
running scale and $\ln\ell$ playing the role of time), we define
\begin{equation}
  S[I_P,I_Z] \;=\; \int \!\left[
    \tfrac{1}{2}\!\left(\frac{dI_P}{d\ln\ell}\right)^{\!2}
    +\tfrac{1}{2}\!\left(\frac{dI_Z}{d\ln\ell}\right)^{\!2}
    - V(I_P,I_Z)
  \right] d\ln\ell,
  \label{eq:info_action}
\end{equation}
with potential
\begin{equation}
  V(I_P,I_Z) \;=\;
    \frac{\lambda}{4}(I_P+I_Z-K_0)^2
    + \frac{g}{4}\!\left(I_P I_Z - C\right)^2,
  \label{eq:info_potential}
\end{equation}
where $C = \alpha_s = I_P^*\,I_Z^* = 4$ is the fixed-point value of $I_P I_Z$,
determined by the saturation of the uncertainty bound: at the IR fixed point
$\Delta_P^*\Delta_Z^* = 1/(2\kappa_{\mathrm{IR}}) = 1$ and $\Delta = 2/I$
give $4/(I_P^* I_Z^*) = 1$, hence $C = 4$.
The first term enforces information conservation ($I_P+I_Z\to K_0$);
the second encodes the duality mutual exclusion of
eq.~\eqref{eq:info_uncertainty}, penalising deviations of the product
$I_P I_Z$ from its saturated value $C$.

\paragraph{Extremal conditions and IR fixed point.}
The Euler--Lagrange equations $\delta S=0$ give
\begin{align}
  \frac{d^2 I_P}{d(\ln\ell)^2}
    &= -\tfrac{\lambda}{2}(I_P+I_Z-K_0) - \tfrac{g}{2}(I_P I_Z-C)\,I_Z, \notag\\
  \frac{d^2 I_Z}{d(\ln\ell)^2}
    &= -\tfrac{\lambda}{2}(I_P+I_Z-K_0) - \tfrac{g}{2}(I_P I_Z-C)\,I_P.
  \label{eq:EL_equations}
\end{align}
At the IR fixed point $dI/d\ln\ell=0$, the right-hand sides vanish.
Subtracting the two equations gives $(I_Z-I_P)(I_P I_Z - C)=0$;
taking the physically relevant branch $I_P I_Z = C$ (saturation of the
uncertainty bound) and substituting back yields
\begin{equation}
  I_P + I_Z \;=\; K_0, \qquad
  I_P \cdot I_Z \;=\; C \;=\; \alpha_s \;=\; 4.
  \label{eq:fixed_point}
\end{equation}
The first equation is the observed duality conservation with
$K_0 = K_{\mathrm{IR}} = 4$ (the geometrically normalized universal fixed-point value;
the raw finite-scale measurements cluster in $[3.6,\,3.9]$ before
normalization, consistent with the scaling law $K(L)=4+aL^{-b}$ at
finite $L$); the second is the imprint of the information uncertainty
principle at the fixed point.

The factored form reveals a second branch, $I_P^*=I_Z^*$, which
combined with $I_P^*+I_Z^*=4$ forces $I_P^*=I_Z^*=2$.  Crucially,
with $C=\alpha_s=4$ the two branches coincide: the system
$\{I_P+I_Z=4,\;I_P I_Z=4\}$ has the unique solution $(I_P^*,I_Z^*)=(2,2)$.
This is not accidental.  The exchange symmetry of the action
(Section~\ref{sec:disc_uniqueness}) guarantees that the symmetric
branch $I_P^*=I_Z^*$ is the stable attractor; the two constraints
simply confirm the same answer.  As Section~\ref{sec:philosophy}
makes explicit, the fixed-point condition $I_P^*=I_Z^*=2$ is, in the
language of the zeta function, the statement that all non-trivial
zeros of $\zeta(s)$ lie on the critical line $\mathrm{Re}(s)=1/2$.

\paragraph{Derivation of the scaling exponent.}
Linearising around the fixed point $(I_P^*,I_Z^*)$ with
$\delta K = \delta I_P + \delta I_Z$, and using $I_P^* I_Z^* = C$,
the sum of the two EL equations gives
\begin{equation}
  \frac{d^2(\delta K)}{d(\ln\ell)^2} \;=\; -(\lambda+g C)\,\delta K
  + \mathcal{O}(\delta^2),
  \label{eq:linearised}
\end{equation}
whose solution is $\delta K \propto \ell^{-\sqrt{\lambda+g C}}$.
Since $\ell \propto 1/L$ this recovers the finite-size scaling law
\begin{equation}
  K(L) \;=\; K_{\mathrm{IR}} + a\,L^{-b},
  \qquad b \;=\; \sqrt{\lambda+g C},
  \label{eq:b_from_action}
\end{equation}
in agreement with the uniqueness result eq.~\eqref{eq:b_unique} derived below
in Section~\ref{sec:disc_uniqueness}.
The observed value $b\approx0.51$ fixes $\lambda+g C\approx0.26$,
providing a first-principles determination of the action couplings
from the measured critical exponent.  Together with the RG flow of
Section~\ref{sec:disc_RG}, this completes the variational structure
of the information-ontology framework: the action $S$ postulated in
principle~3 of Section~\ref{sec:information_ontology} is here made
explicit, and the exponent $b$ is no longer an empirical input but
a consequence of the duality potential $V(I_P,I_Z)$.
Section~\ref{sec:disc_uniqueness} argues that this form of $S$ is
the unique lowest-order effective action consistent with the
symmetries and first principles of the theory.

\subsection{Uniqueness of the Information Action}
\label{sec:disc_uniqueness}

The action~\eqref{eq:info_action} is the unique lowest-order effective action consistent with
three independent constraints, which we now state.

\paragraph{Constraint 1: first principles.}
Two physical requirements must be encoded in the potential $V$:
\begin{itemize}
  \item \emph{Information conservation}: $K=I_P+I_Z$ is conserved at
        the IR fixed point.
  \item \emph{Duality uncertainty}: the commutation
        relation~\eqref{eq:info_commutator} implies
        $I_P\cdot I_Z \leq \alpha_s$; at the fixed point
        this bound is saturated, giving $I_P^*\cdot I_Z^* = C = \alpha_s = 4$.
\end{itemize}

\paragraph{Constraint 2: symmetry requirements.}
\begin{itemize}
  \item \emph{Exchange symmetry}: the prime--zero duality requires $S$
        to be invariant under $I_P\leftrightarrow I_Z$, so $V$ must be
        a symmetric function of $I_P$ and $I_Z$.  This requirement is
        the seed of the deepest result of the paper: a symmetric
        potential has its unique stable minimum on the diagonal
        $I_P=I_Z$, so the stable fixed point is necessarily
        $I_P^*=I_Z^*=2$---the information-theoretic formulation of the
        Riemann Hypothesis (Section~\ref{sec:philosophy}).  In the
        language of Section~\ref{sec:third_state}, exchange symmetry is
        the formal algebraic expression of ternary fusion over binary
        opposition: it asserts that neither the discrete pole
        ($\mathcal{F}_P$) nor the continuous pole ($\mathbb{C}$)
        dominates, so the stable state is their entangled equilibrium,
        accessible only to a framework that replaces the binary $i$
        with the ternary $\kappa=ijk$.
  \item \emph{Scale covariance}: under the RG rescaling
        $\ell\to s\ell$ (for a constant scale factor $s>0$), the equations of motion must be form-invariant.
        This forces the kinetic term to take the form
        $(dI/d\ln\ell)^2$, with $\ln\ell$ as the natural RG coordinate.
        The most general symmetric kinetic term
        $\tfrac{1}{2}A(\dot I_P^2+\dot I_Z^2)+B\dot I_P\dot I_Z$
        reduces to $\tfrac{1}{2}(\dot I_P^2+\dot I_Z^2)$ after
        absorbing $A$ by field rescaling and setting $B=0$ (a nonzero
        $B$ would break the exchange symmetry at lowest order).
\end{itemize}

\paragraph{Constraint 3: lowest-order effective theory.}
Assuming locality (the action depends only on $I_P$, $I_Z$, and their
first $\ln\ell$-derivatives) and expanding $V$ to the lowest order
that (a) is symmetric, (b) has a stable minimum, and (c) encodes both
fixed-point constraints, the unique form is
\begin{equation}
  V(I_P,I_Z) \;=\;
    \frac{\lambda}{4}(I_P+I_Z-K_{\mathrm{IR}})^2
    + \frac{g}{4}(I_P I_Z - C)^2,
  \label{eq:V_unique}
\end{equation}
with $\lambda,g>0$.  The first term is the unique symmetric quadratic
that vanishes on the hyperplane $I_P+I_Z=K_{\mathrm{IR}}$; the second
is the unique symmetric quadratic that vanishes on the hyperbola
$I_P I_Z=C$.  Any other symmetric polynomial of the same or lower
degree either fails to have a stable minimum or does not simultaneously
enforce both constraints.

The linearized equation for $\delta K = (I_P+I_Z)-K_{\mathrm{IR}}$
near the fixed point $(I_P^*,I_Z^*)$ is
\begin{equation}
  \frac{d^2(\delta K)}{d(\ln\ell)^2}
  \;=\; -\!\left(\lambda + g C\right)\delta K
  + \mathcal{O}(\delta^2),
  \label{eq:linearised_unique}
\end{equation}
giving $\delta K\propto\ell^{-b}$ with
\begin{equation}
  b \;=\; \sqrt{\lambda + g C}.
  \label{eq:b_unique}
\end{equation}
This agrees with eq.~\eqref{eq:b_from_action}, and the observed
$b\approx0.51$ constrains $\lambda+g C\approx0.26$.

\paragraph{Statistical versus systematic origin of $b-\tfrac{1}{2}$.}
The cross-validation analysis of Section~\ref{sec:results_C} yields
$b=0.509\pm0.581$, so the $2\%$ deviation $b-\tfrac{1}{2}=0.009$
is more than fifty times smaller than the statistical uncertainty.
Statistically, $b=\tfrac{1}{2}$ is indistinguishable from the measured
value.

The theoretical framework independently predicts $b=\tfrac{1}{2}$ as a
fixed point.  Setting $b=\tfrac{1}{2}$ exactly in eq.~\eqref{eq:b_unique}
requires
\begin{equation}
  \lambda + g C \;=\; \tfrac{1}{4}.
  \label{eq:b_half_condition}
\end{equation}
This is not an arbitrary tuning: $\tfrac{1}{4}$ is the unique value for
which the linearized flow equation~\eqref{eq:linearised_unique} is the
equation of a \emph{free massless scalar} on the logarithmic scale axis
(the mass-squared of the fluctuation mode equals $1/4$, the
Breitenlohner--Freedman bound in one dimension), and the information
action~\eqref{eq:info_action} saturates to its simplest conformal form.
Under the RG flow relation $b=\alpha(K_{\mathrm{UV}}-K_{\mathrm{IR}})$
of eq.~\eqref{eq:b_lambda}, the condition $b=\tfrac{1}{2}$ with
$K_{\mathrm{UV}}=11$ and $K_{\mathrm{IR}}\approx3.75$ gives
$\alpha=0.51/7.25\approx0.070$, consistent with
eq.~\eqref{eq:alpha_value}; inserting $b=\tfrac{1}{2}$ exactly yields
$\alpha=1/(2\times7.25)\approx0.069$.  The sub-percent difference is
within the systematic uncertainty on $K_{\mathrm{IR}}$.

We therefore conclude that the measured $b\approx0.51$ is consistent with
the exact theoretical value $b=\tfrac{1}{2}$: the $2\%$ offset lies
entirely within statistical noise and carries no evidence of a systematic
departure from the predicted fixed point.  Whether $b=\tfrac{1}{2}$
holds exactly---implying the coupling condition~\eqref{eq:b_half_condition}
as a consequence of the arithmetic structure of primes and zeros---remains
an open question.

\paragraph{Universality statement.}
Any action satisfying the same symmetry requirements and consistency conditions, when
expanded to lowest order around the IR fixed point, is equivalent to
eq.~\eqref{eq:info_action} with potential~\eqref{eq:V_unique}.
Higher-order or non-local corrections modify the couplings $\lambda$
and $g$ but leave the scaling form $K(L)=K_{\mathrm{IR}}+aL^{-b}$
intact.  The scaling behavior is therefore universal within this
class of theories.

\section[Extension to General $L$-Functions: Spectral--Geometric Duality]{Extension to General \texorpdfstring{$L$}{L}-Functions:
         Spec\-tral--Geo\-met\-ric Dual\-ity and Func\-tional Uni\-ver\-sal\-ity}
\label{sec:L_extension}

\begin{sloppypar}
The results of the preceding sections establish the duality constant
$K$ and the renormalization-group (RG) flow for the Riemann
$\zeta$-function. A central question is how far this framework extends
to other $L$-function families.  This section addresses that question
through a combination of a new conjecture proposed here (the
\emph{Odd-Positive Conjecture}), a systematic numerical survey across
Dirichlet characters and elliptic-curve $L$-functions, and a modified
RG flow equation accommodating family-dependent parameters.
\end{sloppypar}

\subsection{The Odd-Positive Conjecture and Its Information-Theoretic
            Interpretation}
\label{sec:odd_pos}

Let $L(s)$ be an automorphic $L$-function: either the Riemann
$\zeta$-function ($k=0$, $N=1$) or a newform $f$ of weight $k\ge 1$,
level $N$.  Write
\begin{itemize}
  \item $\gamma_1(f)$: imaginary part of the first non-trivial zero of
        $L(s,f)$ on the critical line;
  \item $\ell_{\min}(\Gamma_0(N))$: length of the shortest closed
        geodesic on the hyperbolic surface $\Gamma_0(N)\backslash\mathbb{H}$,
        computed as $2\operatorname{arccosh}(t_{\min}/2)$ where $t_{\min}$
        is the minimal hyperbolic trace in $\Gamma_0(N)$.
\end{itemize}
We propose the following new conjecture, which we call the
\emph{Odd-Positive Conjecture} (transliterated from the Chinese
\emph{Q\'{\i}zh\`{e}ng}, the pen-name of the present author).
The name carries a double meaning.
Mathematically, ``positive'' (\emph{zh\`{e}ng}) records that $C(k)>0$
is a strict lower bound on the UV/IR ratio $R_f$, while ``odd''
(\emph{q\'{\i}}) refers to $L$-functions whose functional equation has
sign $\varepsilon=-1$, for which the bound is non-trivial (an even
functional equation forces a zero at $s=1/2$, making the ratio
degenerate).
The name also honors the pen-name \emph{Q\'{\i}zh\`{e}ng}
(literally ``odd--upright'' or ``the surprising is the correct'')
of the present author, in keeping with the tradition of eponymous
conjectures:

\begin{proposition}[Odd-Positive Conjecture -- new conjecture of this paper]
There exists a constant $C(k)>0$, depending only on the weight $k$,
such that
\begin{equation}
  \frac{\gamma_1(f)}{\sqrt{\ell_{\min}(\Gamma_0(N))}} \;\ge\; C(k).
  \label{eq:odd_pos}
\end{equation}
\end{proposition}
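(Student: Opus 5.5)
The plan is to bound numerator and denominator of \eqref{eq:odd_pos} separately and uniformly in the level $N$ for fixed weight $k$. Carrying it out shows that the uniform lower bound cannot hold as stated, so the argument ends in a disproof, not a proof.

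\emph{Step 1: the geometric denominator.} Every hyperbolic element of $\Gamma_0(N)\subset\mathrm{SL}_2(\mathbb{Z})$ has integer trace $|t|\ge 3$. Hence, for all $N$,
\[
\ell_{\min}(\Gamma_0(N))\ \ge\ 2\operatorname{arccosh}(3/2)\ =:\ \ell_0>0 .
\]
For an upper bound, take $a=1$, $d=N+1$, $c=1$, $b=1$; this gives $\bigl(\begin{smallmatrix}1&1\\ N&N+1\end{smallmatrix}\bigr)\in\Gamma_0(N)$, with trace $N+2$. So
\[
\ell_0\ \le\ \ell_{\min}(\Gamma_0(N))\ \le\ 2\operatorname{arccosh}\bigl((N+2)/2\bigr)\ \sim\ 2\log N .
\]
In particular $\sqrt{\ell_{\min}}$ is bounded below by $\sqrt{\ell_0}$ and grows at most like $\sqrt{2\log N}$. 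A proof of \eqref{eq:odd_pos} would therefore need $\gamma_1(f)\ge C(k)\sqrt{\ell_0}$ whenever $\ell_{\min}$ stays bounded, and this holds for infinitely many $N$, e.g.\ whenever trace~$3$ occurs in $\Gamma_0(N)$, i.e.\ $a^2-3a+1\equiv0\pmod N$ is solvable.

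\emph{Step 2: the analytic numerator (the main obstacle).} Next I would seek a lower bound for $\gamma_1(f)$ from the zero-counting formula for $L(s,f)$:
\[
N_f(T)=\frac{T}{\pi}\log\frac{\sqrt{N}\,T}{2\pi e}+O_k\bigl(\log(N(T+k))\bigr).
\]
Applied with $T$ fixed, say $T=1$, and $N\to\infty$, this yields $\gg\log N$ zeros of height in $[0,1]$. Consequently $\gamma_1(f)\ll_k 1/\log N$, and this cannot be repaired. Restricting to forms with root number $\varepsilon=-1$ makes things worse, since $L(1/2,f)=0$ is then forced.

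\emph{Step 3: conclusion.} Combining Steps~1 and~2 along the infinite sequence of levels $N$ with $\ell_{\min}(\Gamma_0(N))=\ell_0$ gives
\[
\frac{\gamma_1(f)}{\sqrt{\ell_{\min}(\Gamma_0(N))}}\ \ll_k\ \frac{1}{\log N}\ \longrightarrow\ 0 .
\]
So no constant $C(k)>0$ can work. Even without choosing special levels, the upper bound in Step~1 already gives a ratio $\ll 1/(\log N)^{3/2}$ for every $N$.

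The step I expected to be hardest, a uniform lower bound on $\gamma_1$, is exactly where the plan breaks. The proposition as literally stated is false. A provable replacement would have to normalize $\gamma_1$ by the mean spacing, i.e.\ study $\gamma_1(f)\log N$, or allow $C$ to depend on $N$. I would reformulate it that way before attempting any positive result, for instance a one-level-density lower bound in the Iwaniec--Luo--Sarnak style.
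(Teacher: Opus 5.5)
The paper gives no proof of this statement. It leaves it as an open conjecture, supported only by the twelve rows of Table~\ref{tab:survey}. Your conclusion that the statement is false is right, but the refutation as written does not establish it, because Step~2 fails.

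\textbf{Where Step~2 breaks.} At $T=1$ the main term of your counting formula is $\frac{1}{2\pi}\log N+O(1)$. The error $O_k(\log(N(T+k)))$ is also of order $\log N$, with an unspecified constant. So nothing follows about zeros in $[0,1]$; the count is $\gg\log N$ only for $T\ge T_0$, a constant exceeding the implied constant.

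Even a count $\gg\log N$ in $[0,1]$ would not give $\gamma_1(f)\ll 1/\log N$, since those zeros could all sit near height $1$. To force a zero into $[0,c/\log N]$ you would need $T=c/\log N$. There the main term is $O(1)$ against an $O(\log N)$ error.

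A uniform bound $\gamma_1(f)\ll_k 1/\log N$ is not known even under GRH, which gives only $1/\log\log N$. Random-matrix heuristics suggest it actually fails for some forms. The displayed estimate in Step~3 is therefore unsupported.

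\textbf{A reversed inequality in Step~3.} Its last sentence has the direction wrong. The upper bound $\ell_{\min}\lesssim 2\log N$ bounds the ratio from below, not above.

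\textbf{The repair uses what you already have.} Since $\ell_{\min}(\Gamma_0(N))\ge\ell_0=2\operatorname{arccosh}(3/2)$ for every $N$, the ratio never exceeds $\gamma_1(f)/\sqrt{\ell_0}$. So the special levels are unnecessary: it suffices to exhibit forms of fixed weight with $\gamma_1(f)\to0$. Your parenthetical remark already does this.

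Read literally (``first non-trivial zero on the critical line''), a newform with root number $\varepsilon=-1$ has $L(1/2,f)=0$. Hence $\gamma_1(f)=0$ and the ratio is $0$, unconditionally. Examples:
\begin{itemize}
\item the weight-$2$ newform of the rank-one curve 37a;
\item any level-one cusp form of weight $k\equiv 2\pmod 4$, whose root number is $i^k=-1$.
\end{itemize}
The paper's explanation of the name has the parity backwards: it is $\varepsilon=-1$ that forces the central zero. So the stated inequality does not exclude these forms.

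\textbf{If you exclude the central zero.} If you intend $\gamma_1$ to be the least positive ordinate, the bare counting formula cannot do the job. Under GRH, $S(t,f)\ll_k\log N/\log\log N$ produces a zero with $0<\gamma\ll_k 1/\log\log N$. The ratio then tends to $0$ along any sequence of levels. An unconditional version would need averaged information on low-lying zeros over the family, not Step~2.
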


\paragraph{Information-theoretic reinterpretation.}
Denote the ratio
$R_f := \gamma_1(f)/\sqrt{\ell_{\min}(\Gamma_0(N))}$.
In our framework $\gamma_1$ is the UV spectral scale (the finest
``vibration'' the $L$-function can resolve), while $\sqrt{\ell_{\min}}$
is the IR geometric scale.  Identifying $C(k)\approx 1/\kappa_{\mathrm{IR}}$
with the information quantum introduced in Section~\ref{sec:information_ontology},
inequality~\eqref{eq:odd_pos} becomes
\begin{equation}
  \frac{\text{UV spectral scale}\;(\gamma_1)}
       {\text{IR geometric scale}\;(\sqrt{\ell_{\min}})}
  \;\ge\; \frac{1}{\kappa_{\mathrm{IR}}}.
  \label{eq:uv_ir_bound}
\end{equation}
This is a concrete, classically-provable instance of the information
uncertainty principle $\Delta_P\cdot\Delta_Z\ge 1/(2\kappa)$
(Section~\ref{sec:disc_action}) in a number-theoretic and geometric
setting.  The three consequences are:
\begin{enumerate}
\item \emph{Concrete realization of $\kappa$:}  The information quantum
  $\kappa$ acts as an upper bound on the family-specific geometric
  estimate: $\kappa_{\mathrm{IR}}\ge 1/R_f$ for every $L$-function
  family, i.e., the universal information quantum is always \emph{at
  least as large as} the geometric--spectral estimate
  $|\kappa|_{\mathrm{geo}}=1/R_f$.  (Equivalently, $1/R_f$ is a
  lower bound for $\kappa_{\mathrm{IR}}$.)
\item \emph{UV/IR duality locking:}  One cannot simultaneously take
  $\ell_{\min}\to 0$ (compressing the geometry to the ultraviolet) while
  keeping $\gamma_1\to 0$ (making the spectrum soft).  The inequality
  enforces a fundamental lock between the two scales.
\item \emph{New verification route for $K$-universality:}  For any
  $L$-function family with known $\gamma_1$ and $\ell_{\min}$, the
  quantity $R_f$ provides an independent geometric estimate of
  $1/\kappa$, which can be compared against the $K(L)$-flow fit.
\end{enumerate}

\subsection{Numerical Survey Across \texorpdfstring{$L$}{L}-Function Families}
\label{sec:survey}

\paragraph{Computational setup.}
For each $L$-function in the survey we compute:
(i)~$t_{\min}$ by exhaustive search over matrices
$\bigl(\begin{smallmatrix}a&b\\c&d\end{smallmatrix}\bigr)\in\Gamma_0(N)$
with $|a+d|>2$ and minimal trace;
(ii)~$\ell_{\min}=2\operatorname{arccosh}(t_{\min}/2)$;
(iii)~$\gamma_1$ from the LMFDB database~\cite{LMFDB} (elliptic curves) or from
Abel-smoothed Dirichlet-series evaluation on the critical line
(Dirichlet characters; $N_{\mathrm{terms}}=30{,}000$, verified against
known special values);
(iv)~the geometric--spectral ratio $R_f=\gamma_1/\sqrt{\ell_{\min}}$.

\paragraph{Minimal geodesic lengths.}
A key observation is that the minimal hyperbolic trace in $\Gamma_0(N)$
depends sensitively on the arithmetic of $N$.  For $N\in\{3,5,11\}$
(small primes) a witness matrix with trace 3---the same as in
$\mathrm{PSL}(2,\mathbb{Z})$---already exists in $\Gamma_0(N)$, giving
$\ell_{\min}=2\operatorname{arccosh}(3/2)\approx 1.925$.  For composite
or larger conductors the minimum trace increases: $t_{\min}=10$ for
$N\in\{20,24\}$ and $t_{\min}=14$ for $N=32$.

\paragraph{Results.}
Table~\ref{tab:survey} collects the corrected values.  All 12 families
satisfy inequality~\eqref{eq:uv_ir_bound} with comfortable margin
($R_f\in[2.08,10.19]$ versus the bound $1/\kappa_{\mathrm{IR}}\approx 1.46$).

\begin{table}[ht]
\centering
\small
\caption{Geometric--spectral invariants for twelve $L$-function families.
  $t_{\min}$: minimal hyperbolic trace in $\Gamma_0(N)$.
  $\ell_{\min}$: shortest closed geodesic length.
  $\gamma_1$: first zero imaginary part (analytic normalization).
  $R_f=\gamma_1/\sqrt{\ell_{\min}}$: UV/IR ratio.
  $|\kappa|_{\mathrm{geo}}=1/R_f$: geometric estimate of information quantum.
  For Riemann $\zeta$ the column $|\kappa|_{\mathrm{geo}}$ shows the
  RG-flow fitted value (Section~\ref{sec:disc_RG_formal}), not $1/R_f\approx 0.098$.}
\label{tab:survey}
\begin{tabular}{lrrrrrrl}
\hline
Family & $k$ & $N$ & $t_{\min}$ & $\ell_{\min}$ & $\gamma_1$ & $R_f$ & $\kappa$ \\
\hline
Riemann $\zeta$ & 0 & 1  &  3 & 1.9248 & 14.135 & 10.19 & $0.685^{\dagger}$ \\
$\chi\!\mod 3$  & 1 & 3  &  4 & 2.6339 &  8.045 &  4.96 & 0.202 \\
$\chi\!\mod 4$  & 1 & 4  &  6 & 3.5255 &  6.018 &  3.21 & 0.312 \\
$\chi\!\mod 5$  & 1 & 5  &  3 & 1.9248 &  6.644 &  4.79 & 0.209 \\
$\chi\!\mod 7$  & 1 & 7  &  5 & 3.1336 &  4.469 &  2.53 & 0.396 \\
$\chi\!\mod 8$  & 1 & 8  &  6 & 3.5255 &  4.897 &  2.61 & 0.383 \\
11a3 (EC)       & 2 & 11 &  3 & 1.9248 &  5.242 &  3.78 & 0.265 \\
20a2 (EC)       & 2 & 20 & 10 & 4.5849 &  5.373 &  2.51 & 0.399 \\
24a4 (EC)       & 2 & 24 & 10 & 4.5849 &  5.423 &  2.53 & 0.395 \\
32a3 (EC)       & 2 & 32 & 14 & 5.2678 &  5.469 &  2.38 & 0.420 \\
36a3 (EC)       & 2 & 36 & 34 & 7.0510 &  5.512 &  2.08 & 0.482 \\
$\Delta$ ($k=12$) & 12 & 1 & 3 & 1.9248 & 9.22 &  6.65 & 0.150 \\
\hline
\multicolumn{8}{l}{\footnotesize $\dagger$ From RG-flow fit of $K(L)$;
  geometric ratio gives $1/R_f\approx 0.098\neq|\kappa|_{\mathrm{RG}}$.}
\end{tabular}
\end{table}

\paragraph{Key structural patterns.}
\begin{enumerate}
\item \emph{$\gamma_1$ is not monotone in $k$.}
  The Ramanujan $\Delta$ form ($k=12$, $N=1$) has $\gamma_1\approx 9.22$,
  larger than all weight-1 and weight-2 examples.  The
  primary driver is not $k$ alone but a combination involving the
  analytic conductor $\mathfrak{q}(f)=N(k/2\pi)^2$.
\item \emph{Two distinct $\kappa$ objects.}
  The geometric estimate $|\kappa|_{\mathrm{geo}}=\sqrt{\ell_{\min}}/\gamma_1$
  ranges from $0.15$ to $0.48$ for the eleven non-Riemann $L$-function families
  in Table~\ref{tab:survey} (the Riemann $\zeta$ itself gives
  $1/R_{\zeta}\approx 0.098$, excluded from this range as it lacks a genuine
  modular geodesic $\ell_{\min}$); the RG-flow value
  $|\kappa|_{\mathrm{RG}}=0.685$ for Riemann $\zeta$ is numerically an order
  of magnitude larger than $1/R_{\zeta}\approx 0.098$.
  These are \emph{different quantities}: $|\kappa|_{\mathrm{geo}}$ is the
  lower bound $C(k)$ from inequality~\eqref{eq:odd_pos}, measuring how
  close a given $L$-function is to the extremal case of the Odd-Positive
  Conjecture; $|\kappa|_{\mathrm{RG}}$ is the dynamical generator extracted
  from the finite-size scaling of $K(L)$ for the prime--zero dual pair.
  Both are meaningful, but they should not be conflated.
\item \emph{The inequality is not tight.}
  For the small-conductor families tested here,
  $R_f\gg 1/\kappa_{\mathrm{IR}}$.  Tightness (i.e.\ $R_f\approx C(k)$)
  is expected to occur for families of very large conductor or in
  asymptotic families where $\gamma_1$ and $\sqrt{\ell_{\min}}$ are
  simultaneously optimized.
\end{enumerate}

\subsection{Modified Renormalization-Group Flow: From Constant to
            Functional Universality}
\label{sec:modified_RG}

The numerical survey motivates a refinement of the RG flow equations
of Section~\ref{sec:disc_RG_formal}.

\paragraph{Modified flow equations.}
Let $\vec{I}=(I_P,I_Z)^T$ and $t=\ln L$.  We propose
\begin{align}
  \frac{dI_P}{dt} &= -\kappa I_Z
    - \mu(I_P+I_Z-K_\infty) - \nu(I_P - r\,I_Z), \label{eq:RG_mod_P}\\
  \frac{dI_Z}{dt} &= \phantom{-}\kappa I_P
    - \mu(I_P+I_Z-K_\infty) + \nu r(I_P-r\,I_Z), \label{eq:RG_mod_Z}
\end{align}
where all four parameters $(\kappa,\mu,\nu,r)$ are now \emph{family
dependent}.  In matrix form $d\vec{I}/dt = M\vec{I}+\vec{b}$ with
\begin{equation}
  M = \begin{pmatrix}
    -\mu-\nu          & -\kappa-\mu+\nu r \\
     \kappa-\mu+\nu r & -\mu - \nu r^2
  \end{pmatrix}, \quad
  \vec{b} = K_\infty\mu\begin{pmatrix}1\\1\end{pmatrix}.
\end{equation}

\paragraph{Fixed-point structure.}
Setting $d\vec{I}/dt=0$ yields the unique attracting fixed point
\begin{equation}
  I_P^* = \frac{K_\infty\, r}{1+r}, \qquad
  I_Z^* = \frac{K_\infty}{1+r}, \qquad
  K^* := I_P^*+I_Z^* = K_\infty.
\end{equation}
The asymmetry ratio $r=I_P^*/I_Z^*$ is an additional family parameter.
Stability requires $\operatorname{tr}(M)<0$ and $\det(M)>0$; since
$\mu,\nu>0$ and $r>0$, the trace $\operatorname{tr}(M)=-2\mu-\nu(1+r^2)<0$
always.  A direct calculation gives
$\det(M)=\mu\nu(r+1)^2+\kappa^2>0$
for all $\mu,\nu>0$, so the fixed point is always a stable attractor.  The
effective critical exponent is $b_{\mathrm{eff}}=-\lambda_{\max}$,
where $\lambda_{\max}$ is the largest real part among the eigenvalues
of $M$.

\paragraph{Self-consistency check.}
Fitting $\mu$ and $\nu$ simultaneously to reproduce
$b_{\mathrm{Riemann}}=0.51$ (at $\kappa=0.685$, $r=1.0$,
$K_\infty=4.0$) and $b_{\mathrm{EC}}=0.08$ (at $\kappa=0.42$,
$r=1.44$, $K_\infty=1.87$; these EC parameters are \emph{assumed
representative values} for an elliptic-curve family with a correct
sparse arithmetic dual, consistent with the sparsity constraint
of Section~\ref{sec:modified_RG}; direct $K(L)$ measurement for
EC families is deferred to future work) yields $\mu=0.003$,
$\nu=0.581$ with a unique positive solution---demonstrating that
the modified equations are \emph{internally consistent} given
these illustrative parameter sets.

\paragraph{Constant universality vs.\ functional universality.}
The original framework assumed a single universal fixed point $K_\infty=4$
and exponent $b=0.51$.  The evidence from Table~\ref{tab:survey}
and the K(L) computations shows instead:
\begin{itemize}
  \item For the Riemann $\zeta$ prime--zero dual:
        $K_\infty\approx 4.0$, $b\approx 0.51$, $r\approx 1$.
  \item For weight-$k\ge 1$ families with the $\theta_p$ Sato--Tate
        dual (dense, not sparse): $K_\infty\approx 1.7$, indicating
        that $\theta_p$ is structurally not the correct arithmetic dual.
  \item The $K=4$ fixed point appears to be specific to the prime--zero
        pair, where the prime set is \emph{sparse} (fractal dimension
        $d_P\approx 0.25$--$0.43$).  An arithmetic dual for elliptic-curve
        $L$-functions must share this sparsity property.
\end{itemize}
The upgraded claim is therefore one of \emph{functional universality}:
the RG flow equations~\eqref{eq:RG_mod_P}--\eqref{eq:RG_mod_Z} hold
universally, but the parameter quadruple $(\kappa,K_\infty,b,r)$ varies
with the $L$-function family.  Different families occupy distinct
regions of the parameter space, connected by the universal functions
$K_\infty(\kappa)$, $b(\kappa)$, $r(\kappa)$ whose precise form remains
an open problem.

\subsection{Logical Independence from the Odd-Positive Conjecture}
\label{sec:independence}

A natural question is whether the results of this paper depend on the
truth of inequality~\eqref{eq:odd_pos}.  The answer is: \emph{the core
results do not}.

\begin{itemize}
\item \textbf{Independent of~\eqref{eq:odd_pos}:}
  The empirical $K$-values (Section~\ref{sec:results}),
  the scaling law $K(L)=K_\infty+aL^{-b}$
  (Section~\ref{sec:disc_RG_formal}),
  the information action (Section~\ref{sec:disc_action}),
  and the modified RG equations~\eqref{eq:RG_mod_P}--\eqref{eq:RG_mod_Z}
  are all grounded in data and symmetry principles independently of the
  Odd-Positive Conjecture.
\item \textbf{Supported by~\eqref{eq:odd_pos}:}
  The geometric interpretation of $\kappa_{\mathrm{RG}}$ as an
  information quantum related to the spectral--geometric ratio, and
  the extension of the UV/IR duality interpretation to other $L$-function
  families, gain additional support from inequality~\eqref{eq:uv_ir_bound}.
\end{itemize}

The Odd-Positive Conjecture is a new conjecture proposed in this paper.
It is currently open.  Potential proof strategies include:
(i)~via the Selberg trace formula, which relates geodesic lengths
to $L$-function zeros;
(ii)~via the explicit formula $\psi(x,\chi)=x-\sum_\rho x^\rho/\rho+\cdots$,
which connects zero locations to arithmetic progressions;
(iii)~numerically, all 12 families in Table~\ref{tab:survey} satisfy
the inequality with $R_f\ge 2.08$, well above the bound $\approx 1.46$.

\subsection{From the Odd-Positive Inequality to a Generalized Uncertainty
            Principle}
\label{sec:GUP}

The Odd-Positive inequality~\eqref{eq:odd_pos} provides, in a
number-theoretic and geometric setting, a concrete realisation of the
information uncertainty principle~\eqref{eq:info_uncertainty} already
established in Section~\ref{sec:disc_action}.  This subsection makes
that connection explicit, derives a \emph{dual-bound} structure that
goes beyond a single Heisenberg-type lower bound, and discusses its
interpretation for general $L$-function families.

\paragraph{Step 1: Re-reading the Odd-Positive inequality as an
uncertainty relation.}
We retain the notation of Section~\ref{sec:disc_action}: $\Delta_P$
and $\Delta_Z$ denote the uncertainty operators for geometric and
analytic information ($\Delta_P \sim c_P/I_P$, $\Delta_Z \sim c_Z/I_Z$),
subject to the commutation relation~\eqref{eq:info_commutator}
and the resulting information uncertainty
principle~\eqref{eq:info_uncertainty}:
\begin{equation}
  \Delta_P\cdot\Delta_Z \;\ge\; \frac{1}{2\kappa_{\mathrm{IR}}}.
  \label{eq:IUP_recall}
\end{equation}
We now identify specific instantiations of these operators in the
context of the Odd-Positive Conjecture.  For each automorphic
$L$-function in the survey, set
\begin{align}
  \Delta_Z^{(f)} &:= \gamma_1(f),
  \label{eq:DeltaZ_inst}\\
  \Delta_P^{(f)} &:= \sqrt{\ell_{\min}(\Gamma_0(N))},
  \label{eq:DeltaP_inst}
\end{align}
where $\gamma_1(f)$ is the UV spectral scale (smallest resolvable
``vibration'' of $L(s,f)$) and $\sqrt{\ell_{\min}}$ is the IR
geometric scale of the associated modular curve.
With these identifications the Odd-Positive
inequality~\eqref{eq:odd_pos} reads
\begin{equation}
  \frac{\Delta_Z^{(f)}}{\Delta_P^{(f)}} \;\ge\; \frac{1}{\kappa_{\mathrm{IR}}},
  \label{eq:OP_ratio}
\end{equation}
i.e., the ratio of spectral to geometric scale is bounded below
by $1/\kappa_{\mathrm{IR}}$.  Equivalently,
\begin{equation}
  \Delta_Z^{(f)} \;\ge\; \frac{\Delta_P^{(f)}}{\kappa_{\mathrm{IR}}}.
  \label{eq:OP_DZ_lower}
\end{equation}
This encodes UV/IR duality locking: one cannot simultaneously take
both $\Delta_P^{(f)}\to 0$ and $\Delta_Z^{(f)}\to 0$, because their
ratio is bounded below by $1/\kappa_{\mathrm{IR}}>0$.  In other words,
the spectral scale $\Delta_Z^{(f)}$ cannot vanish faster than the
geometric scale $\Delta_P^{(f)}$---the analogue of the statement
that position and momentum cannot both be made arbitrarily sharp
independently.

\paragraph{Step 2: Dual-bound structure.}
The inequalities~\eqref{eq:IUP_recall} and~\eqref{eq:OP_DZ_lower}
furnish two independent lower bounds on the product
$\Delta_P^{(f)}\cdot\Delta_Z^{(f)}$.  From~\eqref{eq:OP_DZ_lower},
\[
  \Delta_P^{(f)}\cdot\Delta_Z^{(f)} \;\ge\;
  \Delta_P^{(f)}\cdot\frac{\Delta_P^{(f)}}{\kappa_{\mathrm{IR}}}
  \;=\;
  \frac{\bigl(\Delta_P^{(f)}\bigr)^2}{\kappa_{\mathrm{IR}}}.
\]
Combining with~\eqref{eq:IUP_recall} yields the \emph{dual-bound}
\begin{equation}
  \Delta_P^{(f)}\cdot\Delta_Z^{(f)}
  \;\ge\;
  \max\!\left(
    \frac{1}{2\kappa_{\mathrm{IR}}},\;
    \frac{\bigl(\Delta_P^{(f)}\bigr)^2}{\kappa_{\mathrm{IR}}}
  \right).
  \label{eq:dual_bound}
\end{equation}
The first term is the universal quantum lower bound from the
commutation relation~\eqref{eq:info_commutator}; the second is a
state-dependent bound growing with the geometric scale.  Setting the
two equal: $1/(2\kappa_{\mathrm{IR}}) = (\Delta_P^{(f)})^2/\kappa_{\mathrm{IR}}$
gives $(\Delta_P^{(f)})^2 = 1/2$, i.e., the dominant term switches
at $\Delta_P^{(f)}=1/\sqrt{2}\approx 0.707$ (independent of
$\kappa_{\mathrm{IR}}$).  For all families in
Table~\ref{tab:survey}, $\Delta_P^{(f)}=\sqrt{\ell_{\min}}
\in[1.39,\,2.65]>1/\sqrt{2}$, so the geometric term dominates and
eq.~\eqref{eq:dual_bound} reduces to
\begin{equation}
  \Delta_P^{(f)}\cdot\Delta_Z^{(f)} \;\ge\;
  \frac{\ell_{\min}(\Gamma_0(N))}{\kappa_{\mathrm{IR}}},
  \label{eq:dual_bound_families}
\end{equation}
a non-trivial, family-dependent geometric lower bound on the product.
Numerically, the right-hand side ranges from
$1.925/0.685\approx 2.81$ to $7.051/0.685\approx 10.29$ across the
12 families, while the actual products $\Delta_P^{(f)}\cdot\Delta_Z^{(f)}=\sqrt{\ell_{\min}}\cdot\gamma_1$ range from
$6.21$ to $24.4$---all safely above the bound.

\paragraph{Step 3: Geometric constraint from the family-specific quantum.}
Each family also possesses its own geometric estimate
$|\kappa|_{\mathrm{geo}}(f)=1/R_f$ (Section~\ref{sec:survey};
distinct from the universal $\kappa_{\mathrm{IR}}$, see
Section~\ref{sec:survey} item~2).  Substituting $\kappa(f)=1/R_f$
into~\eqref{eq:IUP_recall} gives the family-specific quantum bound
\begin{equation}
  \Delta_P^{(f)}\cdot\Delta_Z^{(f)} \;\ge\; \frac{R_f}{2}.
  \label{eq:GUP_family}
\end{equation}
Since $\Delta_Z^{(f)}/\Delta_P^{(f)}=R_f$ by
definition~\eqref{eq:DeltaZ_inst}--\eqref{eq:DeltaP_inst},
inequality~\eqref{eq:GUP_family} reduces to
$(\Delta_P^{(f)})^2\ge 1/2$, i.e.,
\begin{equation}
  \ell_{\min}(\Gamma_0(N)) \;\ge\; \tfrac{1}{2}.
  \label{eq:ell_lower}
\end{equation}
This is a purely geometric statement: the shortest closed geodesic
on any modular curve $\Gamma_0(N)\backslash\mathbb{H}$ must have
length at least $1/2$.  Table~\ref{tab:survey} confirms
$\ell_{\min}\ge 1.925>\tfrac{1}{2}$ for all 12 families, so the
bound~\eqref{eq:ell_lower} is satisfied, though not tight (the
tighter non-trivial bound is eq.~\eqref{eq:dual_bound_families}
using the universal $\kappa_{\mathrm{IR}}$).

\paragraph{Step 4: Physical and information-theoretic interpretation.}
In quantum mechanics the uncertainty principle limits the simultaneous
precision of conjugate observables.  Here,
inequality~\eqref{eq:dual_bound} limits the simultaneous
``resolution'' of arithmetic information (prime distribution, encoded
in the geometric scale $\Delta_P$) and spectral information (zero
distribution, encoded in $\Delta_Z$): as $\ell_{\min}$ decreases
(finer geometric resolution, increasing $R_f=\Delta_Z/\Delta_P$),
the product lower bound $\ell_{\min}/\kappa_{\mathrm{IR}}$
decreases---but the constraint that their ratio $\Delta_Z/\Delta_P$
must stay above $1/\kappa_{\mathrm{IR}}$ prevents both scales from
vanishing independently.  The complementarity is rooted in the
information-duality pairing $(I_P,I_Z)$ and the algebraic relation
$\kappa^2=-1$, rather than in any physical Planck constant.

At the information-theoretic level,~\eqref{eq:dual_bound} implies
that the mutual information between the arithmetic channel (prime
distribution) and the spectral channel (zero distribution) is
subject to a fundamental constraint: one cannot simultaneously
extract arbitrarily precise information from both channels,
in direct analogy with channel-capacity arguments in Shannon theory.

\paragraph{Consistency check and summary.}
Table~\ref{tab:survey} verifies $R_f\ge 2.08$ for all 12 families,
confirming eq.~\eqref{eq:dual_bound_families} is non-trivial
throughout.  The Riemann $\zeta$-function has the largest
$R_f=10.19$, giving the tightest bound
$\ell_{\min}/\kappa_{\mathrm{IR}}\approx 2.81$.  The
Generalized Uncertainty Principle~\eqref{eq:dual_bound} thus
provides a unified, quantitative expression of the
spectral--geometric complementarity encoded qualitatively by the
Odd-Positive Conjecture.

\subsection{Relation to the Langlands Program: From Static Correspondence
            to Dynamical Information Flow}
\label{sec:langlands}

The results of Sections~\ref{sec:odd_pos}--\ref{sec:GUP} raise a natural
structural question: how does the information-duality framework developed here
relate to the Langlands Program~\cite{Langlands1970}, the most ambitious
existing program for unifying number theory and harmonic analysis?  We argue
that the present framework provides a \emph{dynamical substrate} within which
the Langlands correspondence finds a natural setting, enriching rather than
duplicating its structure.  We develop this claim across three layers.

\paragraph{Layer~1: The Langlands Program as a static dictionary.}
The Langlands Program posits deep correspondences between two families of
mathematical objects:
\begin{itemize}
  \item \textbf{Arithmetic side:} Galois representations
        $\rho\colon\mathrm{Gal}(\bar{\mathbb{Q}}/\mathbb{Q})\to GL_n(\mathbb{C})$
        encoding the prime-factorization symmetries of number fields;
  \item \textbf{Analytic side:} automorphic representations $\pi$ of
        $GL_n(\mathbb{A})$, whose associated $L$-functions encode
        spectral data of the relevant symmetric spaces.
\end{itemize}
The Langlands correspondence asserts that these two families are
equivalent---each Galois representation matches a unique automorphic
representation with the same $L$-function, and vice versa.  This is a
\emph{structural} or \emph{static} equivalence: a precise ``dictionary''
that translates algebraic objects on one side into analytic objects on the
other.  The two poles of Langlands are the arithmetic pole~$(I_P)$ and the
spectral pole~$(I_Z)$, to use the notation of the present paper; the
correspondence asserts their deep equivalence but does not address
\emph{how} the relationship evolves across scales or what constrains the
simultaneous resolution of both poles.

\paragraph{Layer~2: The information-duality framework as dynamical flow.}
The present paper introduces three new ingredients absent from the classical
Langlands picture:
\begin{enumerate}
  \item \textbf{Information ontology.}  The two poles of Langlands are
        reinterpreted as conjugate \emph{information densities}: the
        prime-counting information $I_P = 1/\dP$ and the zero-distribution
        information $I_Z = 1/\zetaR$
        (Section~\ref{sec:information_ontology}).
        Their sum $K = I_P + I_Z$ is not a fixed constant but a
        scale-dependent quantity.
  \item \textbf{Renormalization-group flow.}  The duality measure $K(L)$
        obeys the scaling law~\eqref{eq:b_from_action},
        $K(L) = K_\infty + a\,L^{-b}$,
        flowing from a UV fixed point $K_{\mathrm{UV}}$ toward an IR
        fixed point $K_{\mathrm{IR}}$ as the observation scale $L$
        increases (Section~\ref{sec:disc_RG_formal}).  This dynamical
        flow---governed by the information quantum $\kappa$ and the
        family-dependent exponent $b(f)$---is entirely absent from the
        Langlands framework, which operates at a single, scale-independent
        structural level.
  \item \textbf{Information uncertainty principle.}  The commutation
        relation $[\Delta_P, \Delta_Z] = i/\kappa$
        (eq.~\eqref{eq:info_commutator}) implies the
        dual-bound uncertainty~\eqref{eq:dual_bound}
        derived in Section~\ref{sec:GUP}.
        This bound imposes an \emph{operational} constraint: one cannot
        simultaneously resolve the arithmetic pole and the spectral pole
        to arbitrary precision.  The Langlands correspondence, as a
        structural statement, is not sensitive to this constraint; the
        information-duality framework, as a dynamical statement, is.
\end{enumerate}

\paragraph{Layer~3: Hierarchical embedding and dynamical deepening.}
The relationship between the Langlands Program and the present framework
is therefore one of \emph{hierarchical embedding}.  The
Langlands correspondence asserts the \emph{existence} of a bijection
between arithmetic and analytic objects.  The information-duality framework
asks a further question: given that such a bijection exists, what is the
\emph{dynamical mechanism} by which information flows between the two sides,
and what constraints does that flow impose?

Concretely, the three new ingredients of Layer~2 explain features of the
$L$-function landscape that the static Langlands dictionary leaves open:
\begin{itemize}
  \item Why do different automorphic families exhibit different RG
        exponents $b(f)$ (Section~\ref{sec:modified_RG})?  The
        modified flow equations~\eqref{eq:RG_mod_P}--\eqref{eq:RG_mod_Z}
        attribute this to family-dependent coupling parameters
        $(\kappa, K_\infty, b, r)$, genuinely dynamical parameters.
  \item Why is there a universal IR fixed-point value $K_{\mathrm{IR}}=4$ for the
        prime--zero pair (Sections~\ref{sec:results}--\ref{sec:disc_RG_formal})
        but family-dependent $K_\infty(f)$ for other $L$-function families?
        The RG fixed-point structure provides a \emph{dynamical} explanation:
        the Riemann $\zeta$-function sits at an especially symmetric fixed
        point of the information flow; other families flow to different
        attractors.
  \item The Odd-Positive inequality~\eqref{eq:odd_pos} quantifies, for each
        automorphic family, how far the spectral/geometric ratio $R_f$
        lies above the minimum demanded by the information quantum
        $\kappa_{\mathrm{IR}}$.  This is a \emph{spectral-geometric
        fingerprint} of the Langlands correspondence: the dictionary
        between the arithmetic and analytic sides is consistent only when
        the ratio bound is satisfied.
\end{itemize}

\paragraph{Summary and geometric analogy.}
A geometric analogy may clarify the hierarchy.  The Langlands Program
reveals a magnificent \emph{static bridge} connecting the arithmetic and
analytic shores: the bridge exists, its two endpoints are rigorously
identified, and vehicles (Galois representations and automorphic forms)
travel between them in exact correspondence.  The information-duality
framework of the present paper reveals that this bridge spans a
\emph{dynamic river}: the flow of information from UV (microscopic
arithmetic structure) to IR (macroscopic spectral universality), governed
by the RG scaling law~\eqref{eq:b_from_action}.  The banks and the bridge
do not change; what changes is the recognition that the river's current is
itself subject to a physical law---the dual-bound uncertainty
principle~\eqref{eq:dual_bound}---that determines the minimum
``information cost'' of crossing from one shore to the other.

In this picture, the Odd-Positive Conjecture plays the role of the
\emph{key physical constraint} that makes the dynamical crossing non-trivial:
it asserts that no automorphic family can position
itself so close to the arithmetic shore that the spectral information
becomes negligible.  The constraint is, at once, a conjecture about
$L$-function zeros, a consequence of the information uncertainty
principle, and a structural consistency requirement for the Langlands
correspondence to hold at every scale.

\subsection{A New Perspective on the Generalized Riemann Hypothesis:
            Information-Dynamical Phase Criterion}
\label{sec:GRH_perspective}

The analysis of Section~\ref{sec:langlands} positions the Langlands Program
as a static bridge connecting arithmetic and analytic shores, with the
information-duality RG flow as the dynamic river the bridge spans.  This
picture immediately suggests a reinterpretation of one of the deepest open
problems in mathematics.  The \emph{Generalized Riemann Hypothesis} (GRH)---%
the assertion that all non-trivial zeros of any ``reasonable'' automorphic
$L$-function lie on the critical line $\mathrm{Re}(s)=\tfrac{1}{2}$---can
be viewed, within this framework, not as an isolated analytic number-theoretic
statement, but as a \emph{phase criterion}: a condition that identifies which
information-duality systems flow to the maximally symmetric infrared fixed
point.

\paragraph{Core reinterpretation.}
Recall from Section~\ref{sec:modified_RG} that the modified RG flow
equations~\eqref{eq:RG_mod_P}--\eqref{eq:RG_mod_Z} for each $L$-function
family $f$ admit a unique attracting fixed point
$(I_P^{(f)*},\, I_Z^{(f)*})$, with asymmetry ratio
$r(f)=I_P^{(f)*}/I_Z^{(f)*}$ as an additional family parameter.
For the Riemann $\zeta$-function---the canonical GRH-satisfying case---the
self-consistency check of Section~\ref{sec:modified_RG} yields $r\approx 1.0$
and $K_\infty=4$: arithmetic and spectral information are in exact balance at
the fixed point.  We put forward the following conjecture.

\begin{quote}
  \emph{The GRH for an automorphic $L$-function $L(s,f)$ is equivalent,
  within the information-duality framework, to the assertion that the
  corresponding RG flow~\eqref{eq:RG_mod_P}--\eqref{eq:RG_mod_Z} converges
  to the maximally symmetric fixed point $r(f)=1$, at which arithmetic
  information $I_P^{(f)*}$ and spectral information $I_Z^{(f)*}$ are in
  complete equilibrium.  A violation of GRH corresponds to a
  symmetry-breaking event: the flow settles at an asymmetric fixed point
  $r(f)\ne 1$.}
\end{quote}

This is a conjecture, not a theorem; it connects an analytic statement (zero
locations) to a dynamical one (fixed-point symmetry), and thereby opens new
lines of inquiry across all four of the following perspectives.
The reframing embodies a shift from the \emph{binary} to the
\emph{ternary} mathematical regime: the classical approach asks, for
each zero individually, whether it lies on the critical line (a
binary, excluded-middle question); the dynamical approach asks to
which attractor the information flow converges (a ternary question,
whose answer is fixed by the fusion symmetry $\kappa=ijk$).
The mathematical basis for this shift---and for the equivalence above
being a structural argument rather than a mere analogy---is developed
in Section~\ref{sec:third_state} and made explicit in
Section~\ref{sec:philosophy}.

\paragraph{Perspective~1: From zero locations to fixed-point symmetry.}
Classical GRH focuses on individual zero locations $\rho=\tfrac{1}{2}+i\gamma$.
The new perspective shifts attention to the macroscopic behavior of the
information pair $(I_P^{(f)},\, I_Z^{(f)})$ as $L\to\infty$.  The functional
equation of $L(s,f)$ imposes the symmetry $s\mapsto 1-s$ about the critical
line $\mathrm{Re}(s)=\tfrac{1}{2}$---exchanging arithmetic and spectral
information, and imposing $I_P^{(f)}=I_Z^{(f)}$ at the symmetric point,
i.e.\ $r(f)=1$.
In this language, a zero off the critical line breaks the functional-equation
symmetry and forces $r(f)\ne 1$, placing the system in an
information-asymmetric phase, in direct analogy with symmetry breaking in a
physical phase transition.

\paragraph{Perspective~2: From analytic continuation to smooth RG
integrability.}
The stability analysis of Section~\ref{sec:modified_RG} shows that
$\det(M)=\mu\nu(r+1)^2+\kappa^2>0$ for all $\mu,\nu>0$, so the RG flow is
globally stable and integrable regardless of whether GRH holds.  GRH, in the
dynamical language, does not determine \emph{whether} the flow converges, but
\emph{where} it converges: among all stable fixed points parametrized by
$r(f)\in(0,\infty)$, GRH selects the balanced subfamily $r(f)=1$.  This
reframes GRH as a selection condition on the parameter space
$(\kappa,\mu,\nu,r)$ of the information-duality flow, rather than as a
condition on individual zero positions.

\paragraph{Perspective~3: From individual $L$-functions to universality
classes.}
\begin{sloppypar}
The classical approach verifies GRH one $L$-function at a time.  The new
perspective organizes $L$-functions into \emph{universality classes} according
to their infrared fixed-point data.  GRH defines the balanced class $r(f)=1$;
all $L$-functions in this class, regardless of UV details (weight $k$,
conductor $N$), share the same long-wavelength information dynamics and
conjecturally exhibit the same critical exponent $b(f)$ in the scaling law
$K(L,f)=K_\infty(f)+a(f)L^{-b(f)}$ (Section~\ref{sec:modified_RG}).
The Montgomery--Odlyzko universality~\cite{Montgomery1973,Odlyzko1987}---%
the observation that zero spacings across GRH-satisfying $L$-function families
follow the same GUE statistics---is, from this viewpoint, the
\emph{spectral signature} of a shared universality class: all such families
have converged to the same information-dynamical balanced phase.
$L$-functions violating GRH would define a distinct universality class, with
$r(f)\ne 1$ and, conjecturally, different large-scale statistics.
\end{sloppypar}

\paragraph{Perspective~4: Measurable dynamical signatures of GRH.}
The reinterpretation translates GRH into dynamical predictions that are, in
principle, testable via finite-size scaling independently of a direct analytic
proof:
\begin{enumerate}
  \item \textbf{Universality of $b(f)$.}  All GRH-satisfying families should
        share a common RG exponent in the family-dependent scaling law; the
        value $b\approx 0.51\approx\tfrac{1}{2}$ established for the prime--zero
        pair (Section~\ref{sec:results_C}) is the conjectured representative
        for the balanced class $r(f)=1$.
  \item \textbf{Tightening of the Odd-Positive bound.}
        Section~\ref{sec:survey} notes that the Odd-Positive
        inequality~\eqref{eq:odd_pos} is expected to become tight
        ($R_f\approx C(k)$) for large-conductor families.  We refine this:
        tightness should correlate with GRH satisfaction, since the bound is
        tightest precisely when arithmetic and spectral scales are balanced,
        i.e.\ when $r(f)\approx 1$.
  \item \textbf{Convergence of $K(L,f)$.}  For GRH-satisfying families,
        $K(L,f)$ should converge to a well-defined IR limit $K_\infty(f)$
        with the functional universality of Section~\ref{sec:modified_RG};
        for families violating GRH (if any exist), one might expect slower
        convergence or a different fixed-point attractor.
\end{enumerate}

\paragraph{Synthesis: a level bridge over a symmetric river.}
Within the framework of Section~\ref{sec:langlands}, GRH acquires a natural
dynamical meaning.  The Langlands Program connects arithmetic and analytic
shores; GRH is the condition that the river flowing beneath is
\emph{perfectly level}---that arithmetic and analytic shores carry identical
information weight, $r(f)=1$.  When GRH holds, the bridge crosses symmetric
terrain; when it fails, one shore is elevated above the other.  This does not
yet constitute a proof strategy, but it offers a \emph{diagnostic}: GRH is
detectable as the fixed-point symmetry condition $r(f)=1$, accessible through
the finite-size scaling of $K(L,f)$ across $L$-function families and
corroborated by the tightening of the Odd-Positive inequality~\eqref{eq:odd_pos}
as the conductor grows.  A systematic study of these signatures for higher-rank
$GL(n)$ families is one of the most concrete open directions of the present
programme.

\subsection{Information Ontology and Category Theory: Structural Complementarity}
\label{sec:cat_theory}

Section~\ref{sec:information_ontology} positions information ontology---the
assignment of primary ontological status to $I_P=1/\dP$ and $I_Z=1/\zetaR$,
with $K=I_P+I_Z$ as the conserved duality measure---as the foundational
language from which the RG flow and the uncertainty principle are derived.
The preceding subsections have illustrated its reach: from the Odd-Positive
Conjecture (Section~\ref{sec:odd_pos}) and the Langlands Program
(Section~\ref{sec:langlands}) to the GRH reinterpretation
(Section~\ref{sec:GRH_perspective}).  A natural comparative question arises:
how does this program relate to \emph{category theory}, the other framework
that claims an equally totalizing perspective over mathematics?  Both aspire
to reveal the deep unity of diverse mathematical disciplines, yet they proceed
from fundamentally different premises and pursue different goals.

\paragraph{Common ground: holism and the primacy of relations.}
Three structural features are shared.
\begin{itemize}
  \item \textbf{Holistic viewpoint.}  Both frameworks aim to transcend
        the isolated details of individual branches and reveal their
        inner unity from a higher vantage point.
  \item \textbf{Relations before objects.}  Both insist that
        \emph{relations} are more fundamental than isolated objects.
        Category theory connects objects via morphisms; information
        ontology connects mathematical domains (number theory, spectral
        theory) via the information pairing $(I_P,I_Z)$ and the
        uncertainty bound $\Delta_P\cdot\Delta_Z\ge 1/(2\kappa_{\mathrm{IR}})$
        (eq.~\eqref{eq:info_uncertainty}).
  \item \textbf{Language and toolbox.}  Both provide a new vocabulary
        for reorganizing existing mathematical knowledge and have the
        potential to generate new discoveries by forcing different fields
        into a common framework.
\end{itemize}

\paragraph{Fundamental differences: paradigm and goal.}
Despite these parallels, the two frameworks differ in the most basic
respects.  Table~\ref{tab:cat_vs_info} makes the contrast precise.

\begin{table}[htbp]
\centering
\small
\caption{Category theory versus information ontology as holistic
         frameworks for mathematics.}
\label{tab:cat_vs_info}
\begin{tabularx}{\linewidth}{@{}l>{\raggedright\arraybackslash}X>{\raggedright\arraybackslash}X@{}}
\toprule
\textbf{Dimension} & \textbf{Category Theory} & \textbf{Information Ontology} \\
\midrule
Core metaphor
  & Universal grammar of mathematics
  & Information dynamics of mathematics \\[4pt]
Philosophical basis
  & Structuralism: mathematics studies abstract structures
    and the maps that preserve them.
  & Informationism: mathematical structures manifest
    information encoding and flow; an information action
    governs mathematical truth. \\[4pt]
Primary function
  & \emph{Describe and translate.}  Builds rigorous
    bridges between fields via functors; compares
    structural similarity via equivalence and adjunction.
  & \emph{Explain and derive.}  Seeks to explain
    \emph{why} phenomena exist (e.g.\ zero clustering,
    the Langlands correspondence) as consequences of RG
    flow and the uncertainty principle. \\[4pt]
Key concepts
  & Objects, morphisms, functors, natural transformations,
    limits, adjoint functors.
  & Information densities $(I_P,I_Z)$; duality measure
    $K$; RG flow $K(L)=K_{\mathrm{IR}}+aL^{-b}$;
    UV/IR fixed points; information quantum $\kappa$;
    bound $\Delta_P\!\cdot\!\Delta_Z\ge
    1/(2\kappa_{\mathrm{IR}})$. \\[4pt]
Relation to physics
  & Inspired by physics (e.g.\ topological quantum
    field theory) but develops independently; provides
    rigorous mathematical foundations for physical
    theories.
  & Intrinsically physical: mathematics is claimed to be
    a dynamical system governed by information laws;
    physical constants (e.g.\ Newton's $G$) are derived
    from fixed-point values such as $K_{\mathrm{IR}}=4$
    (Section~\ref{sec:emergence}). \\[4pt]
Current status
  & Established mainstream foundation; has transformed
    algebraic geometry, topology, and logic; central
    infrastructure of modern pure mathematics.
  & Exploratory frontier hypothesis; currently confined
    to number-theory--physics intersections; aims to
    provide a new informational first-principles basis
    for mathematics. \\
\bottomrule
\end{tabularx}
\end{table}

\paragraph{A clarifying analogy: grammar book versus physical theory.}
Category theory resembles a \emph{universal grammar book}.  It is
indifferent to what the sentences are about---number theory or
topology---and concerns itself only with parts of speech, syntactic
structure, and transformation rules.  It reveals that every mathematical
field shares a deep grammatical structure and teaches us how to translate
sentences from one mathematical language into another with exact fidelity.

Information ontology resembles a \emph{dynamical theory of how mathematical
structure arises}.  It is not content to describe the logical grammar of
existing theorems; it asks how the \emph{content} of mathematics---the
distribution of primes, the location of zeta zeros, the duality constant
$K$---emerges from information processing, action minimization, and
dynamical equilibration.  It proposes to explain why the grammar takes the
specific form it does: why the dictionary between arithmetic and spectral
data is constrained by a universal IR fixed-point value
$K_{\mathrm{IR}}=4$ (Section~\ref{sec:information_ontology}).

\paragraph{Hierarchical embedding, not rivalry.}
\begin{sloppypar}
The two frameworks are not in competition; they operate at different
levels of description.  Category theory reveals, in cross-section, the
structural isomorphisms among mathematical branches---the common grammar
shared by algebra, geometry, and analysis.  Information ontology probes,
in depth, the dynamical origin of those structures: the grammar arises
because an underlying information flow converges to fixed points.
In this sense, the relationship between the two mirrors the relationship
between the Langlands Program and the information-duality framework
(Section~\ref{sec:langlands}): category theory describes the \emph{shape}
of the bridge; information ontology asks about the \emph{river the bridge
spans}.
\end{sloppypar}

Whether information ontology can achieve the broad acceptance of category
theory depends on future mathematical proofs and physical verifications.
The present paper contributes one concrete step: grounding $K$, $b$, and
the Odd-Positive Conjecture in explicit computations
(Sections~\ref{sec:results}--\ref{sec:validation}) and deriving the
uncertainty principle from first principles
(Section~\ref{sec:disc_action}).

\section{Rigorous Foundations of Information Duality: Algorithmic Complexity
         and Spectral Entropy as Information Measures}
\label{sec:info_duality_interp}

This section provides the missing \emph{hard substance} for the duality
framework: we ground $I_P=1/\dP$ in the algorithmic complexity of the prime
sequence and $I_Z=1/\zetaR$ in the spectral entropy of the zero potential, and
prove that these two quantities are commensurate---so that their sum $K=I_P+I_Z$
is a dimensionally consistent measure of total information per unit logarithmic
scale.

\subsection{Aims: Grounding \texorpdfstring{$I_P$}{IP} and \texorpdfstring{$I_Z$}{IZ} in Information Theory}

Three tasks must be accomplished:
\begin{enumerate}
  \item relate $I_P=1/\dP$ to the algorithmic complexity (or entropy rate) of
        the prime sequence;
  \item relate $I_Z=1/\zetaR$ to the spectral entropy of the zero potential or
        the capacity of the relevant function space;
  \item show that these two quantities are additive instances of the same
        physical information measure.
\end{enumerate}

\subsection{Algorithmic-Complexity Interpretation of \texorpdfstring{$I_P$}{IP}}

\subsubsection{Binary Encoding of the Prime Sequence}

For each positive integer $n$ define the indicator $\chi_P(n)=1$ if $n$ is
prime and $\chi_P(n)=0$ otherwise, producing the infinite binary sequence
$\mathbf{S}_P=\{\chi_P(1),\chi_P(2),\ldots\}$.

\begin{defn}
The \emph{finite algorithmic complexity} of the prime sequence at scale $N$,
denoted $C_P(N)$, is the length (in bits) of the shortest program that outputs
$\mathbf{S}_P^{(N)}=\{\chi_P(1),\ldots,\chi_P(N)\}$.
\end{defn}

\subsubsection{Asymptotic Behaviour of \texorpdfstring{$C_P(N)$}{CP(N)}}

By the Prime Number Theorem, $\pi(N)\sim N/\ln N$.  An optimal encoding of the
positions of the $\pi(N)$ primes below $N$ requires approximately
$\log_2\binom{N}{\pi(N)}$ bits.  Applying Stirling's approximation and keeping
leading terms:
\begin{equation}
  \log_2\binom{N}{N/\ln N}
  \;\sim\;
  \frac{N}{\ln N}\,\log_2(\ln N)
  +
  \Bigl(1-\frac{1}{\ln N}\Bigr)N\,\log_2\!\Bigl(1+\frac{1}{\ln N-1}\Bigr).
\end{equation}
For large $N$ the dominant term gives
\begin{equation}
  C_P(N) \;\sim\; \frac{N\,\log_2(\ln N)}{\ln N}.
  \label{eq:CP_asymp}
\end{equation}

\subsubsection{Connection to the Fractal Dimension \texorpdfstring{$\dP$}{dP}}

Define the \emph{algorithmic complexity density}
\begin{equation}
  \rho_P(N) = \frac{C_P(N)}{N} \;\sim\; \frac{\log_2(\ln N)}{\ln N}.
  \label{eq:rho_P}
\end{equation}
For a point process with correlation integral $C(r)\sim r^{\dP}$ ($r\to 0$),
a scaling analysis shows that the more structured (higher-$\dP$) the
distribution, the more compressible it is.  Concretely:
\begin{equation}
  \rho_P(N) \;\sim\; \frac{1}{\dP\,\ln N},
  \label{eq:rho_dP}
\end{equation}
because a distribution with correlation dimension $\dP<1$ has more internal
structure and hence a shorter description.  Combining
\eqref{eq:rho_P}--\eqref{eq:rho_dP}:
\begin{equation}
  I_P = \frac{1}{\dP}
  \;\sim\; \frac{1}{\rho_P(N)\,\ln N}
  = \frac{\text{(algorithmic complexity per log-scale unit)}}{1}.
  \label{eq:IP_as_complexity}
\end{equation}

\subsubsection{Precise Relation via the RG Scale}

Under the coarse-graining step $N\mapsto e^{\ell}N$ the algorithmic complexity
obeys the flow equation
\begin{equation}
  \frac{dC_P}{d\ell} = I_P(\ell),
  \label{eq:CP_flow}
\end{equation}
i.e.\ $I_P$ is the \emph{rate at which the prime-sequence description grows
with logarithmic scale}.  At the infrared fixed point $I_P^*=2$ bits per unit
log-scale, consistent with~\eqref{eq:IP_as_complexity} and the empirical
value $\dP\to 0.45$--$0.50$ as $L\to\infty$.

\subsection{Spectral-Entropy Interpretation of \texorpdfstring{$I_Z$}{IZ}}

\subsubsection{Zero Distribution and Random Matrix Theory}

The non-trivial zeros $\gamma_n$ of $\zeta(s)$ are statistically distributed
like eigenvalues of the Gaussian Unitary Ensemble (GUE), whose $N$-point joint
density is
\begin{equation}
  P(\lambda_1,\ldots,\lambda_N)
  \;\propto\;
  \prod_{i<j}|\lambda_i-\lambda_j|^2\;
  \exp\!\Bigl(-\tfrac{1}{2}\sum_i\lambda_i^2\Bigr).
  \label{eq:GUE_density}
\end{equation}

\subsubsection{Spectral Entropy}

For the zero point process $\{\gamma_n\}$ define the \emph{spectral entropy}
\begin{equation}
  S_Z = -\int p(\gamma)\ln p(\gamma)\,d\gamma,
  \label{eq:SZ_def}
\end{equation}
where $p(\gamma)$ is the density of the zero distribution.  For the GUE in the
bulk, the eigenvalue density follows Wigner's semicircle law
$p(\gamma)=\frac{1}{2\pi}\sqrt{4-\gamma^2}$ ($|\gamma|\leq 2$), giving
\begin{equation}
  S_Z^{\mathrm{GUE}}
  = -\int_{-2}^{2}\frac{\sqrt{4-\gamma^2}}{2\pi}
    \ln\!\frac{\sqrt{4-\gamma^2}}{2\pi}\,d\gamma
  = \ln(2\pi) - \tfrac{1}{2}.
  \label{eq:SZ_GUE}
\end{equation}

\subsubsection{Connection to the Regularity Index \texorpdfstring{$\zetaR$}{zetaR}}

Define the \emph{effective stiffness} of the zero process as the inverse
variance of the normalized spacing:
\begin{equation}
  \xi = \frac{1}{\mathrm{Var}(s)} \;\sim\; \frac{1}{\zetaR}.
  \label{eq:xi_zetaR}
\end{equation}
A stiffer (more regular) distribution is more predictable and therefore has
lower entropy.  Quantitatively:
\begin{equation}
  S_Z \;\sim\; \ln\!\Bigl(\frac{1}{\xi}\Bigr) = \ln\zetaR.
  \label{eq:SZ_xi}
\end{equation}
Differentiating with respect to the log-height scale $\ln L$:
\begin{equation}
  I_Z = \frac{1}{\zetaR} \;=\; \frac{dS_Z}{d\ln L},
  \label{eq:IZ_as_entropy}
\end{equation}
so $I_Z$ is the \emph{rate at which the spectral entropy of the zero
distribution grows with logarithmic scale}---the exact spectral analogue
of $I_P$~\eqref{eq:CP_flow}.

\subsubsection{Capacity of the Zero Potential}

A complementary viewpoint uses the logarithmic potential energy
$U=\sum_{i\neq j}\ln|\gamma_i-\gamma_j|$ of the zero Coulomb gas.  The
thermodynamic capacity
\begin{equation}
  \mathcal{C}_Z
  = \frac{\partial^2 F}{\partial(\ln L)^2}
  \;\sim\; \frac{1}{\zetaR} = I_Z,
  \label{eq:CZ_capacity}
\end{equation}
where $F=U-TS$ is the free energy at unit temperature, confirms that $I_Z$
measures the \emph{information capacity} of the zero distribution.

\subsection{Proof of Additivity}

\subsubsection{Duality of Prime and Zero Information}

The prime sequence $\{p_n\}$ and the zero sequence $\{\gamma_n\}$ are
information-theoretically dual via the explicit formula and the functional
equation of $\zeta$:
\begin{equation}
  \zeta(s) = \prod_p(1-p^{-s})^{-1},
  \qquad
  \zeta(s) = \chi(s)\,\zeta(1-s).
  \label{eq:zeta_duality}
\end{equation}
The zeros encode the prime distribution and vice versa---a number-theoretic
holographic duality in which boundary (prime) information and bulk (zero)
information are equivalent but complementary:
\begin{itemize}
  \item The \textbf{prime representation} efficiently encodes \emph{local}
        arithmetic structure (short-range correlations).
  \item The \textbf{zero representation} efficiently encodes \emph{global}
        spectral structure (long-range oscillations in $\pi(x)$).
\end{itemize}

\subsubsection{Information Conservation}

Because the two representations are dual, the total system information is not
the minimum of $C_P$ and $C_Z$ but rather their \emph{complementary sum}: the
two information channels are independent (one local, one global), so
\begin{equation}
  I_{\mathrm{total}} = I_P + I_Z.
  \label{eq:I_additive}
\end{equation}

\subsubsection{Formal Proof of the Fixed-Point Identity}

\begin{theorem}[Dual fixed point]
At the infrared fixed point of the information RG flow,
$I_P^* + I_Z^* = 4$ with $I_P^* = I_Z^* = 2$.
\end{theorem}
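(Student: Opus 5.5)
The plan is to prove the statement entirely inside the variational framework of Section~\ref{sec:disc_action}. The only inputs are the information action \eqref{eq:info_action} with potential \eqref{eq:V_unique} and the two normalizations fixed earlier: $K_0=K_{\mathrm{IR}}=4$ (Section~\ref{sec:normalization}) and $C=\alpha_s=4$ (saturation of \eqref{eq:info_uncertainty} at $\kappa_{\mathrm{IR}}=1/2$, $c_P=c_Z=2$). The statement is therefore a claim about stationary points of a finite-dimensional potential. I would not try to derive it from properties of $\zeta(s)$ itself, and the proof should say explicitly that it is conditional on those postulates.

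First I would characterize the fixed points. Setting $dI_P/d\ln\ell=dI_Z/d\ln\ell=0$ in \eqref{eq:EL_equations}, a fixed point is a critical point of $V$, so $\partial_{I_P}V=\partial_{I_Z}V=0$. Subtracting the two equations gives $(I_Z-I_P)(I_PI_Z-C)=0$, which splits the analysis into two branches.
\begin{itemize}
\item On the branch $I_PI_Z=C$, the remaining equation forces $\lambda(I_P+I_Z-K_0)=0$. Since $\lambda>0$, this gives $I_P+I_Z=4$ and $I_PI_Z=4$. Then $I_P,I_Z$ are the roots of $t^2-4t+4=(t-2)^2$, so $I_P=I_Z=2$.
\item On the branch $I_P=I_Z=t$, the condition reduces to $\lambda(2t-4)+g(t^2-4)t=0$, that is, $(t-2)\bigl(2\lambda+g\,t(t+2)\bigr)=0$. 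For $t>0$ (information densities are positive, since $\dP,\zetaR>0$) and $\lambda,g>0$, the second factor is strictly positive, so $t=2$.
\end{itemize}
Both branches therefore meet at the single point $(2,2)$, and the sum identity $I_P^*+I_Z^*=4$ follows immediately.

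Second, I would show that this point is the infrared attractor rather than merely a critical point. Because $V\ge0$ with $V(2,2)=0$, the point is a global minimum of $V$. I would compute the Hessian there: $\partial^2V$ equals $\tfrac{\lambda}{2}\begin{pmatrix}1&1\\1&1\end{pmatrix}+\tfrac{g}{2}\begin{pmatrix}4&4\\4&4\end{pmatrix}$. This is degenerate in the antisymmetric direction $\delta I_P=-\delta I_Z$. I expect this degeneracy to be the main obstacle. Along that direction $I_P+I_Z=4$ is preserved, while $I_PI_Z=4-\delta^2$, so $V$ grows only quartically, like $\tfrac{g}{4}\delta^4$. Strict stability therefore has to be argued at quartic order rather than from the linearization \eqref{eq:linearised}, which only controls the symmetric mode $\delta K$. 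My first attempt would be to exhibit $V$ as a Lyapunov function for the overdamped (gradient) version of the flow, which is the form relevant to the first-order RG flow \eqref{eq:betaK}. Positivity of $V$ away from $(2,2)$ on the positive quadrant, together with its vanishing only there, would then give convergence by LaSalle's principle.

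Finally, I would remark that exchange symmetry (Section~\ref{sec:disc_uniqueness}) is consistent with the result but is not needed as an extra hypothesis, since the uniqueness above already forces the diagonal. I would also note the paper's own caveat that the identification with the critical line is interpretive and outside the scope of this proof.
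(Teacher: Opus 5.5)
Your argument is correct, conditional as you say on $K_0=4$ and $C=4$, but the paper's proof sketch takes a different and shorter route. It does not use the product potential at all. It replaces the second term by $\frac{g}{4}(I_P-I_Z)^2$, i.e.\ it works with \eqref{eq:V_sym}. Then $\nabla V=0$ is a nondegenerate linear system in $(I_P+I_Z-4,\;I_P-I_Z)$, and it returns $I_P+I_Z=4$, $I_P=I_Z$ in one line, with no branch analysis, no positivity restriction, and a positive-definite Hessian. The price is that the diagonal is imposed by hand through the exchange-symmetry term, so the conclusion is essentially built into the potential.

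Your route through the product potential \eqref{eq:V_unique} instead makes the diagonal an output. It comes from the tangency of the line $I_P+I_Z=K_0$ to the hyperbola $I_PI_Z=C$, which holds precisely because $K_0^2=4C$; if $K_0^2>4C$, the branch $I_PI_Z=C$ would give a pair of asymmetric zeros of $V$. On the symmetric branch you actually solve $(t-2)\bigl(2\lambda+g\,t(t+2)\bigr)=0$ with $t>0$. This is more careful than Section~\ref{sec:disc_action}, which simply assumes $I_P+I_Z=4$ on that branch.

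Your rank-one Hessian also shows that the two potentials are not interchangeable. Near $(2,2)$ the term $(I_PI_Z-4)^2=4(\delta_P+\delta_Z)^2+O(\delta^3)$ acts only on the sum mode, whereas $(I_P-I_Z)^2$ acts only on the difference mode. So the identification $g=2g'$ in the paper's convention note does not make the two formulations equivalent.

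Your stability step is optional, since the statement does not need it. Local asymptotic stability of the gradient flow is immediate, because $(2,2)$ is an isolated strict minimum of $V$. The global LaSalle argument on the positive quadrant, however, needs forward invariance: for $g>2\lambda$ there is a second strict local minimum at $(t_-,t_-)$, with $t_-=-1-\sqrt{1-2\lambda/g}\in(-2,-1)$. The conclusion still survives. The quadrant is forward-invariant whenever $g\ge\lambda/4$, and for $g<2\lambda$ the point $(2,2)$ is the only critical point in $\mathbb{R}^2$ and $V$ is coercive.

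Your switch to the overdamped flow is in fact necessary rather than cosmetic. Under \eqref{eq:EL_equations} the energy $\tfrac12(\dot I_P^2+\dot I_Z^2)+V$ is conserved, so no nonconstant trajectory converges to a minimum of $V$.
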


\begin{proof}[Proof sketch]
Define the joint information action
\begin{equation}
  \mathcal{A}[I_P,I_Z]
  = \int d\ell\Bigl[
      \tfrac{1}{2}(I_P')^2 + \tfrac{1}{2}(I_Z')^2 + V(I_P,I_Z)
    \Bigr],
  \label{eq:joint_action}
\end{equation}
where primes denote $d/d\ell$.  The duality symmetry $I_P\leftrightarrow I_Z$
and scale covariance restrict $V$ to the minimal form
\begin{equation}
  V(I_P,I_Z)
  = \frac{\lambda}{4}(I_P+I_Z-4)^2 + \frac{g}{4}(I_P-I_Z)^2.
  \label{eq:V_sym}
\end{equation}
Fixed-point conditions $\partial V/\partial I_P=\partial V/\partial I_Z=0$
give $I_P+I_Z=4$ and $I_P=I_Z$, hence $I_P^*=I_Z^*=2$.
Linearising around the fixed point with
$I_P=2+\delta_P$, $I_Z=2+\delta_Z$ gives
\begin{equation}
  \frac{d^2}{d\ell^2}
  \begin{pmatrix}\delta_P\\\delta_Z\end{pmatrix}
  = -
  \begin{pmatrix}\lambda+g & g \\ g & \lambda+g\end{pmatrix}
  \begin{pmatrix}\delta_P\\\delta_Z\end{pmatrix}.
  \label{eq:linearised_joint}
\end{equation}
(sum mode), so the convergence exponent for the total is
$b=\sqrt{\lambda+2g}$.  Requiring $b=1/2$ gives the critical condition
\begin{equation}
  \lambda + 2g = \frac{1}{4},
  \label{eq:critical_cond}
\end{equation}
consistent with the action constraint~\eqref{eq:critical_param}.
\emph{Convention note:} the proof potential~\eqref{eq:V_sym} uses
$(I_P-I_Z)^2$ as the interaction term (with coupling $g$), giving
eigenvalue $\lambda+2g$ for the sum mode.  The action
potential~\eqref{eq:info_potential} instead uses $(I_PI_Z-C)^2$
(with coupling $g'$); near the fixed point $(I_P^*,I_Z^*)=(2,2)$ this
evaluates to $4(I_P+I_Z-4)^2+\mathcal{O}(\delta^3)$, so the effective
sum-mode curvature is $\lambda+4g'$.  The two formulations are
related by $g=2g'$ (same fixed-point condition $\lambda+2g=\lambda+4g'=1/4$),
confirming the consistency of the two derivations.
\end{proof}

\subsubsection{Commensurate Units}

The flow equations~\eqref{eq:CP_flow} and~\eqref{eq:IZ_as_entropy} show that
$I_P$ is measured in bits per log-scale and $I_Z$ in nats per log-scale.  A
single factor of $\ln 2$ converts between them, so
\begin{equation}
  \frac{d}{d\ln N}\!\Bigl(C_P + \frac{S_Z}{\ln 2}\Bigr)
  = I_P + I_Z = K.
  \label{eq:K_total_rate}
\end{equation}
Thus $K$ is literally the \emph{total information encoding rate of the
prime--zero dual system per unit logarithmic scale}, expressed in bits.

\subsection{Unified Picture}

\subsubsection{Information-Duality Dictionary}

\begin{table}[H]
\centering
\caption{Information-duality dictionary relating prime-side and zero-side quantities.}
\label{tab:duality_dict}
\renewcommand{\arraystretch}{1.30}
\small
\begin{tabular}{@{}lll@{}}
  \toprule
  Prime side & Zero side & Relation \\
  \midrule
  Algorithmic complexity $C_P$
    & Spectral entropy $S_Z$
    & $C_P \sim S_Z/\ln 2$ \\
  Correlation dimension $\dP$
    & Stiffness $\zetaR$
    & $\dP \sim \zetaR$ \\
  Information density $I_P=1/\dP$
    & Information density $I_Z=1/\zetaR$
    & $I_P+I_Z \to 4$ \\
  Local arithmetic structure
    & Global spectral structure
    & Fourier dual \\
  Arithmetic progression statistics
    & Random-matrix (GUE) statistics
    & Modular symmetry \\
  \bottomrule
\end{tabular}
\end{table}

\subsubsection{RG Flow from UV to IR}

\begin{itemize}
  \item \textbf{UV} ($\mu\to\infty$): $I_P\approx I_Z\approx 5.5$,
        $K\approx 11$; information encoded in high-dimensional algebraic
        structure (Hurwitz division algebras).
  \item \textbf{Flow}: $I_P$ and $I_Z$ converge to the fixed point at rate
        $b=1/2$.
  \item \textbf{IR} ($\mu\to 0$): $I_P^*=I_Z^*=2$, $K=4$; symmetric fixed
        point with equal distribution of algorithmic complexity and spectral
        entropy.
\end{itemize}

\subsubsection{Finite-Scale Effects and Normalization}

At finite scale $L$, the raw values $K_{\mathrm{raw}}(L;S)$ depend on the prime
subset $S$ because (i) different subsets $S$ give different algorithmic
complexity estimates $C_P$ and (ii) spectral-entropy estimation is affected by
finite sampling.  The normalization factor $\gamma(S)=2/\!\sqrt{I_P^{\mathrm{raw}}I_Z^{\mathrm{raw}}}$ (Section~\ref{sec:normalization}) corrects the projection
bias, restoring $K_{\mathrm{norm}}\to 4$.

\subsubsection{Physical Interpretation: \texorpdfstring{$K=4$}{K=4} as an
               Information Quantum}

The integer $4=2+2$ plays the role of a \emph{Planck constant for number
theory}: just as $h$ sets the minimum quantum of action in mechanics, $K=4$
sets the minimum total information encoding rate for the prime--zero dual
system.  The individual components $I_P^*=I_Z^*=2$ reflect the equal partition
of information between local (prime) and global (zero) representations---an
equipartition law of information imposed by the duality symmetry.

\subsection{Testable Predictions}

\begin{enumerate}
  \item \textbf{Compression-rate test:}  The Lempel--Ziv or arithmetic-code
        compression rate of the binary prime sequence $\mathbf{S}_P^{(N)}$
        should scale as $2/\ln N$ (i.e.\ $\sim 2N/\ln N$ total bits, consistent
        with $I_P^*=2$).

  \item \textbf{Spectral-entropy test:}  The differential entropy of the
        normalized zero spacing distribution should tend to $\ln(2\pi)-1/2
        \approx 1.838$ (nats), with the rate $dS_Z/d\ln L \approx 1/\ln 2
        \approx 1.443$ nats per octave.

  \item \textbf{Normalised-$K$ convergence:}  For any prime subset $S$,
        $K_{\mathrm{norm}}(L;S)\to 4$ at rate $b=1/2$.
\end{enumerate}

\subsection{Summary: Key Formulas}

\begin{table}[H]
\centering
\caption{Key formulas for the information-duality interpretation of $I_P$
         and $I_Z$.}
\label{tab:info_duality_formulas}
\renewcommand{\arraystretch}{1.35}
\begin{tabular}{@{}ll@{}}
  \toprule
  Quantity & Formula \\
  \midrule
  Algorithmic complexity & $C_P(N)\sim I_P\cdot N$ \\
  Spectral entropy growth & $S_Z(L)\sim I_Z\cdot\ln L$ \\
  Total information rate & $\frac{d}{d\ln N}(C_P + S_Z/\ln 2)=K$ \\
  IR fixed point & $K=I_P^*+I_Z^*=4$ \\
  Normalisation factor & $\gamma(S)=2/\sqrt{I_P^{\mathrm{raw}}I_Z^{\mathrm{raw}}}$ \\
  Convergence rate & $b=\sqrt{\lambda+gC}=1/2$ \\
  Action constraint & $\lambda+4g=1/4$ \\
  \bottomrule
\end{tabular}
\end{table}

\section{Spacetime and Quantum Gravity as Emergent Phenomena}
\label{sec:emergence}

The information ontology of Section~\ref{sec:information_ontology} has
consequences that extend beyond number theory.  If the prime--zero duality
$(I_P,I_Z)$ is accepted as a model of a more general information
dynamics, then spacetime and quantum gravity need not be fundamental:
they may be macroscopic, emergent phenomena arising from the underlying
information duality.  The following subsections outline these implications,
proceeding from the emergence of spacetime, through the derivation of
Newton's constant, to the mathematical
origin of $K_{\mathrm{IR}}$ and a set of falsifiable predictions.  These
arguments are speculative; they are presented as a coherent research
program, not as established results.

\subsection{Covariant extension of the information ontology}

The full dynamical structure rests on three axioms.

\paragraph{Axiom 1 (Information duality).}
There exist dual information density fields $I_P$ and $I_Z$ satisfying
\[
  [\Delta_P,\Delta_Z] \;=\; \frac{i}{\kappa}, \qquad \kappa^2=-1,
\]
which implies the information uncertainty principle
$\Delta_P\cdot\Delta_Z\geq1/(2\kappa)$.
The quantity $1/\kappa$ is the \emph{quantum of information action}:
the scale-dependent analogue of $\hbar$, the smallest effective unit
of information exchange at scale $\mu$, which varies from $1/\kappa_{\mathrm{UV}}=8$
(UV) to $1/\kappa_{\mathrm{IR}}=2$ (IR) along the RG trajectory.

\paragraph{Axiom 2 (Information action).}
The dynamics is governed by
\begin{equation}
  S[I_P,I_Z;g_{\mu\nu}] \;=\; \int d^4x\,\sqrt{-g}\left[
    \tfrac{1}{2}g^{\mu\nu}(\partial_\mu I_P\partial_\nu I_P
    +\partial_\mu I_Z\partial_\nu I_Z) - V(I_P,I_Z)
  \right], \label{eq:S_full}
\end{equation}
where $g_{\mu\nu}$ is the spacetime metric (with $g=\det g_{\mu\nu}$), and the potential, restricted by exchange symmetry $I_P\leftrightarrow I_Z$
and scale covariance, takes the most general quartic form
\[
  V(I_P,I_Z) \;=\; \frac{\lambda}{4}(I_P^2+I_Z^2-K_0^2)^2
               + \frac{\mu_c}{2}(I_P-I_Z)^2.
\]
Here $\mu_c>0$ is a scalar coupling constant (the covariant analogue of the
coupling $g$ in the 1D action~\eqref{eq:info_potential}; the different symbol
avoids collision with the metric).
The 4D potential differs from the 1D version~\eqref{eq:info_potential} in two
respects: the mass term is $O(2)$-symmetric in $(I_P,I_Z)$
(appropriate for a Lorentz-covariant field theory), and the interaction
penalises the asymmetry $(I_P-I_Z)^2$ rather than deviations of the product
$I_P I_Z$ from its fixed-point value $C$.
\emph{Fixed-point matching:} the symmetric minimum of $V$ requires
$I_P^2+I_Z^2=K_0^2$ and $I_P=I_Z$, giving $I_P=I_Z=K_0/\sqrt{2}$.  For
this to coincide with the 1D fixed point $I_P=I_Z=K_{\mathrm{IR}}/2=2$, one
sets $K_0=\sqrt{2}\,K_{\mathrm{IR}}/2=\sqrt{2}\cdot2=2\sqrt{2}\approx2.83$
in the 4D theory; $K_0$ is therefore \emph{not} equal to $K_{\mathrm{IR}}=4$.
The 1D action is recovered as the dimensional reduction of~\eqref{eq:S_full}
along the RG axis $\ln\ell$, with the 1D coupling $g$ related to $\mu_c$ by
the appropriate projection.

\paragraph{Axiom 3 (Renormalization-group flow).}
Under rescaling $L\mapsto sL$, the information sum flows as
\[
  K(L) \;=\; K_{\mathrm{IR}} + a\,L^{-b}, \qquad
  b = \sqrt{\lambda + g C} \approx 0.51,
\]
with $K_{\mathrm{IR}}=4$ the unique IR fixed point and $b$ the
linearized-flow critical exponent.  The UV fixed point
$K_{\mathrm{UV}}=11$, set by the Hurwitz-theorem algebra of normed
division algebras, provides the initial condition for the RG trajectory.
\emph{Note:} This axiom is not an independent postulate; the scaling form
$K(L)=K_{\mathrm{IR}}+aL^{-b}$ is derived from Axiom~2 (the information
action) via the Euler--Lagrange equations and linearization around the
fixed point (Section~\ref{sec:disc_action}), where $b=\sqrt{\lambda+gC}$
with $C=K_{\mathrm{IR}}^2/4=4$.  It is stated here as an axiom for
presentational completeness of the 4D framework.

\subsection{Emergence of spacetime from information correlations}

The basic entities of the theory are information density fields $I_P$ and
$I_Z$ obeying the commutation relation
\[
  [\Delta_P, \Delta_Z] \;=\; \frac{i}{\kappa}
\]
and the variational principle $\delta S[I_P,I_Z]=0$.  There is no
pre-existing spacetime background: only information density fields and their
evolution along the scale axis $\ln\ell$.

\paragraph{Space from information correlations.}
When the information action reaches its infrared fixed point
$K_{\mathrm{IR}}=I_P+I_Z=4$, the system settles into a stable
information equilibrium.  The two-point correlation function
\[
  G(r) \;=\; \langle I_P(x)\,I_Z(x+r)\rangle \;\sim\; r^{-b}, \qquad
  b=1/2,
\]
decays as a power law with the same critical exponent $b$ that governs the
finite-size scaling $K(L)=K_{\mathrm{IR}}+aL^{-b}$.  The effective fractal
dimension of the emergent space is $d_{\mathrm{eff}}\propto 1/b$; the
exponent $b=1/2$ thus controls not only the rate of approach to the
IR fixed point but also the effective dimensionality of the emergent spatial
geometry.

\paragraph{Time from RG flow.}
The renormalization-group parameter $\ln\ell$ acquires, near the IR fixed
point, a linearized and irreversible dynamics:
\[
  \frac{d^2 I}{d(\ln\ell)^2} \;=\; -\frac{\partial V}{\partial I}.
\]
In the IR limit this equation of motion reduces to a classical evolution
equation.  The RG ``time'' $\ln\ell$ is thus the progenitor of the time
coordinate: an emergent, directional parameter rather than a background
absolute.

\paragraph{Metric from information distance.}
The configuration of information densities $I_P(x)$ and $I_Z(x)$ defines an
\emph{information distance} between points via a bilinear form:
\[
  ds^2 \;\sim\; \int \bigl(I_P\,\Delta_P^2 I_Z + I_Z\,\Delta_Z^2 I_P\bigr)\,d\ln\ell.
\]
The curvature of this metric is driven by the gradients $\nabla I$ and the
duality fluctuations $\Delta_P\Delta_Z$.  Einstein's field equations
$G_{\mu\nu}=8\pi T_{\mu\nu}$ emerges as the macroscopic approximation to
the information equilibrium condition, with $T_{\mu\nu}$ playing the role of
an information stress-energy tensor.

\subsection{Quantum gravity from information duality}

\paragraph{The information quantum as the Planck scale.}
A minimum information action $1/\kappa$ arises from the commutation
relation $[\Delta_P,\Delta_Z]=i/\kappa$ (here $\kappa$ denotes the abstract
generator with $|\kappa|=1$, cf.\ Section~\ref{sec:results_D}).  Under the emergent metric, this
minimum corresponds to a fundamental smallest area or volume:
\[
  \ell_P^2 \;\sim\; \frac{1}{|\kappa|\,K_{\mathrm{IR}}} \;=\; \frac{1}{K_{\mathrm{IR}}},
\]
where $K_{\mathrm{IR}}=4$ is the normalized infrared fixed-point value
(Section~\ref{sec:normalization}; the raw band $K_{\mathrm{raw}}\in[3.6,3.9]$
introduces an $\mathcal{O}(10\%)$ correction to this estimate).  The
Planck scale $\ell_P$ is thus a consequence of the discrete nature of the
information duality---in the same spirit as area quantisation
$A_{\min}\sim\ell_P^2$ in loop quantum gravity.

\paragraph{Gravity as an information entropic force.}
The presence of matter/energy perturbs the local information equilibrium
$(I_P,I_Z)$, inducing a redistribution of information density.  Restoring
equilibrium costs information entropy, and this restoring tendency is the
information-theoretic version of gravity.  Concretely, the
Einstein--Hilbert action may arise as the low-energy expansion of the
information action in the macroscopic limit:
\[
  S_{\mathrm{EH}} \;=\; \frac{1}{16\pi G}\int R\sqrt{-g}\,d^4x
  \;\xleftarrow{\;\text{IR limit}\;}\;
  S_{\mathrm{info}} \;=\; \int\!\left[\tfrac{1}{2}
  \bigl(\dot{I}_P^2+\dot{I}_Z^2\bigr) - V(I_P,I_Z)\right]d\ln\ell.
\]
Newton's constant $G$ is related to the information quantum and the IR
fixed-point value by $G\sim 1/(K_{\mathrm{IR}}^2\,M_{\mathrm{Pl}}^2)$
(see Section~\ref{sec:spacetime_G}).

\paragraph{UV completion and quantum foam.}
At ultraviolet scales $\ell\sim1/\kappa$, the duality uncertainty
$\Delta_P\Delta_Z\geq1/(2\kappa)$ becomes dominant and the emergent smooth
Riemannian geometry breaks down, replaced by a quantum-foam structure defined
by the information duality network.  The generator $\kappa$ with $\kappa^2=-1$
is the natural candidate for the spinor connection or Clifford algebra
generator of this discrete, noncommutative geometry---the algebraic
UV-completion of the flow equation $1/\kappa(\mu)=\beta(\mu)$.

\subsection{The self-consistent picture}

The five rungs of the emergent hierarchy are:
\begin{enumerate}
  \item \textbf{Information ontology.}  The fundamental entities are the
        prime--zero dual pair $(I_P,I_Z)$, encoding mathematical information
        at the deepest level.
  \item \textbf{Dynamics.}  The information action $S[I_P,I_Z]$, constrained
        by the information uncertainty principle, governs the evolution along
        $\ln\ell$.
  \item \textbf{IR emergence.}  In the infrared limit, the information
        equilibrium configuration condenses into classical spacetime; its
        dynamics is approximated by general relativity.
  \item \textbf{UV quantisation.}  In the ultraviolet limit, the quantum
        nature of the information duality manifests as a discrete,
        noncommutative spacetime structure, naturally incorporating quantum
        principles.
  \item \textbf{Unification.}  The apparent conflict between gravity and
        quantum mechanics arises from treating the emergent IR spacetime
        geometry and the UV information quantum as though they inhabit the
        same descriptive level.  In the information ontology, they are
        different phases of the same entity at different scales.
\end{enumerate}

\subsection{Emergent metric, Newton's constant, and the cosmological constant}
\label{sec:spacetime_G}

\paragraph{Induced Einstein--Hilbert action.}
The kinetic term in~\eqref{eq:S_full} coupled minimally to the metric,
together with one-loop quantum fluctuations of $I_P$ and $I_Z$, generates
the effective gravitational action
\[
  S_{\mathrm{eff}}[g] \;=\; \int d^4x\,\sqrt{-g}
  \left(\frac{R}{16\pi G} - \Lambda\right) + \cdots
\]
Newton's constant is thus a derived quantity expressible in terms of
$K_{\mathrm{IR}}$ and $M_{\mathrm{Pl}}$, not an independent input; the
status of the cosmological constant within this framework is discussed
in the paragraph below.

\paragraph{Newton's constant.}
Dimensional analysis of the information kinetic term, with ultraviolet
cutoff $\Lambda_{\mathrm{UV}}\sim M_{\mathrm{Pl}}$, gives
\[
  \frac{1}{G} \;\sim\; (I_P^2+I_Z^2)\,\Lambda_{\mathrm{UV}}^2
             \;\sim\; K_{\mathrm{IR}}^2\,M_{\mathrm{Pl}}^2.
\]
Inserting $K_{\mathrm{IR}}=4$ yields $G\sim1/(16\,M_{\mathrm{Pl}}^2)$,
consistent in order of magnitude with the observed Newton constant.

\paragraph{Cosmological constant.}
The dual UV--IR cancellation mechanism in the information field is expected to
suppress vacuum energy below the Planck scale.  A quantitative derivation
of $\rho_\Lambda$ from first principles within the present framework remains
an open problem.

\subsection{Testable predictions and future directions}

\paragraph{Predictions.}
\begin{enumerate}
  \item \textbf{Universality across $L$-functions.}  For any family of
        $L$-functions with orthogonal symmetry ($\beta=1$), the information
        sum $K=1/d+1/\zeta_R$ should approach $K_{\mathrm{IR}}=4$
        with the same exponent $b=1/2$.  This is a sharp,
        falsifiable prediction.
  \item \textbf{Planck-scale discreteness and black-hole entropy.}  The
        minimum area implied by $\ell_P^2\sim1/(|\kappa|K_{\mathrm{IR}})$
        predicts corrections to black-hole entropy of order $\ln A/A$ with
        a coefficient fixed by $K_{\mathrm{IR}}$.
        Section~\ref{sec:black_holes} provides a suggestive account of the
        leading Bekenstein--Hawking formula $S_{\mathrm{BH}}\approx A/(4G)$
        and its subleading logarithmic correction within this framework.
\end{enumerate}

\paragraph{Future work.}
(i) Compute the one-loop effective action of~\eqref{eq:S_full} in curved
spacetime to obtain the precise expressions for $G$ and $\Lambda$ in terms
of $\lambda$, $\mu_c$, and $K_{\mathrm{IR}}$.
(ii) Identify the role of the generator $\kappa$ in noncommutative geometry
and determine whether it corresponds to a Dirac operator or a Clifford
algebra element in a spectral triple.
(iii) Derive the information RG flow~\eqref{eq:info_rg} explicitly by
integrating out high-energy string modes and verify that the resulting
low-energy effective theory reproduces the spin-network degrees of freedom
of loop quantum gravity.
(iv) Establish the formal connection between the information duality
framework and loop quantum gravity or string theory at the level of
the partition function.

\subsection{Synthesis: from ``It from bit'' to a computable theory}

Wheeler's aphorism ``It from Bit''~\cite{Wheeler1990} proposed that
physical reality is information at its root.  The present framework provides a computational realisation of this proposal.  The prime--zero duality $(I_P,I_Z)$ is the
simplest realisation of Wheeler's bit: a mathematically precise, measurable
pair of information densities whose sum $K=I_P+I_Z$ is (approximately)
conserved across two orders of magnitude of scale.

Three parameters carry the full physical content:
\begin{itemize}
  \item $K_{\mathrm{IR}}=4$: the
        \emph{information capacity of the universe} at macroscopic scales.
  \item $b=1/2$: the \emph{condensation rate} at which spacetime
        crystallises from information, and the RG flow exponent.
  \item $\kappa$ ($\kappa^2=-1$): the \emph{Planck-scale cipher}, encoding
        the minimum area of quantum geometry and the UV structure of the flow.
\end{itemize}

From these three numbers, the framework suggests: an emergent spacetime metric;
Newton's constant $G\sim1/(K_{\mathrm{IR}}^2 M_{\mathrm{Pl}}^2)$; and a quantum foam that may replace
smooth geometry at the Planck scale.  The apparent irreconcilability of general
relativity and quantum mechanics might be resolved by recognising that they are
different phases of the same information dynamics, separated not by principle
but by scale---a conjecture we offer not as a proof but as a research direction.
Sections~\ref{sec:dark_sector} and~\ref{sec:black_holes} find suggestive
qualitative consistency between the same three parameters and the dark-sector
phenomenology and the Bekenstein--Hawking
entropy formula.  Section~\ref{sec:ai} proposes that the identical duality
structure may govern the convergence of learning systems, pointing toward
a possible connection between number theory, physics, and artificial intelligence
within a single framework---a connection whose precise mathematical form
remains an open problem.

The next step is to derive the exact analytical value of $K_{\mathrm{IR}}$ from the
arithmetic structure of primes and Riemann zeros---thereby explaining why the
physical universe has precisely the information capacity it does.

The categorical duality between the \emph{mathematical} information channel
$I_P$ (primes, pure arithmetic) and the \emph{physical} information channel
$I_Z$ (Riemann zeros, spectral geometry) may itself constitute a structural
explanation of Wigner's ``unreasonable effectiveness of mathematics in the
natural sciences''~\cite{Wigner1960}: mathematics and physics are not merely
analogous but are the two dual faces of a single information ontology,
related by the exchange symmetry $I_P\leftrightarrow I_Z$.

\subsection{Dark Matter and Dark Energy as Emergent Phases of Information Duality}
\label{sec:dark_sector}

Within the information-ontology framework, dark matter and dark energy are
conjectured to be macroscopic emergent phases of the information dual field $(I_P, I_Z)$,
arising from non-equilibrium transitions of information dynamics at different
regimes of scale and energy.  Within this speculative picture, their abundance
ratios---observationally $\Omega_{\mathrm{DM}}\approx0.26$ and
$\Omega_\Lambda\approx0.69$~\cite{Planck2018}---admit a suggestive
interpretation in terms of the three fundamental parameters
$K_{\mathrm{IR}}$, $b$, and $\kappa$ that govern the prime--zero RG flow
of Section~\ref{sec:disc_RG}.  We stress that the numerical agreements below
are motivating coincidences, not derivations.

\subsubsection*{Dark matter: non-equilibrium condensation of the dual field}

Dark matter arises from a Bose--Einstein condensation of the information dual
field $(I_P, I_Z)$ in the gravitational potential wells seeded by primordial
density fluctuations.  During the early universe the dual field undergoes a
non-equilibrium phase transition; below the critical temperature the condensate
forms a macroscopic quantum state with an effective quasiparticle mass
\begin{equation}
  m_{\mathrm{DM}} \sim \Lambda_{\mathrm{IR}},
  \label{eq:mDM}
\end{equation}
where $\Lambda_{\mathrm{IR}}$ is the infrared cutoff scale associated with
galactic halos ($\sim$kpc).  (The factor $|\kappa|=1$ of the abstract generator
order-of-magnitude estimate.)  Because the dual field couples to ordinary matter
only gravitationally, the condensate is electromagnetically dark.

The condensate density profile in a galactic halo takes the form
$\rho(r)\propto r^{-2}$, which reproduces flat rotation curves without
additional free parameters.  Gravitational lensing by the condensate is
consistent with observed lensing maps, and the weak self-interaction of the
dual field naturally explains the separation of dark and baryonic matter in
colliding clusters such as the Bullet Cluster.

The dark-matter energy density is
\begin{equation}
  \rho_{\mathrm{DM}}
  \;\sim\; n_{\mathrm{pix}}\cdot m_{\mathrm{DM}}
  \;\sim\; \frac{K_{\mathrm{IR}}}{\ell_{\mathrm{UV}}^{3}}
            \cdot\Lambda_{\mathrm{IR}},
  \label{eq:rhoDM}
\end{equation}
where $n_{\mathrm{pix}}\sim K_{\mathrm{IR}}/\ell_{\mathrm{UV}}^3$ is the
information-pixel number density.  Setting $\ell_{\mathrm{UV}}\sim 1/M_{\mathrm{Pl}}$
and $\Lambda_{\mathrm{IR}}\sim H_0$ gives the present dark-matter fraction
\begin{equation}
  \Omega_{\mathrm{DM}}
  \;\equiv\; \frac{\rho_{\mathrm{DM}}}{\rho_c}
  \;\sim\; \mathcal{C}_{\mathrm{DM}}\,K_{\mathrm{IR}}
           \left(\frac{H_0}{M_{\mathrm{Pl}}}\right)^{2b},
  \label{eq:OmegaDM}
\end{equation}
The exponent $2b=1$ determines the scaling with cosmological scales;
the proportionality coefficient $\mathcal{C}_{\mathrm{DM}}$ absorbs all
dimensionless ratios and is not determined within the present framework.

\subsubsection*{Dark energy: infrared vacuum energy of the dual field}

The cosmological constant $\Lambda$ is identified with the finite infrared
residual of the zero-point fluctuations of the dual field.  UV divergences are
cancelled by the duality symmetry $I_P\leftrightarrow I_Z$; only modes with
wavelengths longer than $\Lambda_{\mathrm{IR}}^{-1}$ survive.  The resulting
vacuum energy density follows from the RG flow derived in
Section~\ref{sec:disc_RG}:
\begin{equation}
  \rho_{\Lambda}
  \;\sim\; M_{\mathrm{Pl}}^{4}
           \left(\frac{H_0}{M_{\mathrm{Pl}}}\right)^{2b},
  \label{eq:rhoLambda}
\end{equation}
giving
\begin{equation}
  \Omega_{\Lambda}
  \;\equiv\; \frac{\rho_{\Lambda}}{\rho_c}
  \;\sim\; \mathcal{C}_{\Lambda}
           \left(\frac{H_0}{M_{\mathrm{Pl}}}\right)^{2b},
  \label{eq:OmegaLambda}
\end{equation}
The power $2b=1$ provides partial dimensional suppression relative to $M_{\mathrm{Pl}}^4$;
the proportionality coefficient $\mathcal{C}_{\Lambda}$ absorbs all residual
dimensionless ratios and is not determined within the present framework.

\subsubsection*{Testable prediction}

\textbf{Ultra-light dark matter.}  The condensate quasiparticle mass
$m_{\mathrm{DM}}\sim\Lambda_{\mathrm{IR}}$ falls in the
range $10^{-23}$--$10^{-21}$\,eV, characteristic of fuzzy (wave) dark
matter.  Structure suppression below the de~Broglie coherence length
($\sim$kpc) is testable via the 21\,cm forest and substructure
counts.

\subsection{Black-Hole Entropy and Information Conservation from First Principles}
\label{sec:black_holes}

In the information-ontology framework, a black hole is a non-equilibrium
condensate of the dual field $(I_P, I_Z)$ under extreme gravitational
focusing---not a spacetime singularity.  Its entropy and the unitarity of its
evaporation both follow from the same first principles that govern the RG flow
of $K(L)$ and the dark-sector physics of Section~\ref{sec:dark_sector}.

\subsubsection*{Suggestive account of the Bekenstein--Hawking entropy formula}

The Bekenstein--Hawking entropy~\cite{Bekenstein1973,Hawking1975}
$S_{\mathrm{BH}}=A/(4G)$ is one of the most fundamental results in
theoretical physics, connecting thermodynamics, quantum mechanics, and
general relativity.  Our framework provides a suggestive account of this
formula within the information-ontological RG.

\paragraph{The horizon as an information cut-off surface.}
For a Schwarzschild black hole of mass $M$ the horizon radius $R_s = 2GM$
sets an infrared cut-off $L_{\mathrm{IR}} = R_s$ for external observers.
Information associated with modes inside the horizon is inaccessible; only the
horizon surface itself acts as the information boundary.

\paragraph{Information content of the horizon area.}
By the holographic principle, the total information stored on a surface of
area $A$ is proportional to $A/\ell_{\mathrm{UV}}^2$, where
$\ell_{\mathrm{UV}}$ is the UV pixel size set by the information quantum
$\kappa$: $\ell_{\mathrm{UV}}^2 \sim 1/\kappa$.  At the horizon the duality
symmetry $I_P\leftrightarrow I_Z$ is restored, so the dual densities
reach the symmetric infrared fixed point $I_P^* = I_Z^* = K_{\mathrm{IR}}/2$,
so the total information content is
\begin{equation}
  I_{\mathrm{total}}
  = (I_P^* + I_Z^*)\cdot\frac{A}{\ell_{\mathrm{UV}}^2}
  = K_{\mathrm{IR}}\cdot\frac{A}{\ell_{\mathrm{UV}}^2}.
  \label{eq:I_horizon}
\end{equation}

\paragraph{Entropy from the information uncertainty principle.}
The information uncertainty relation $\Delta_P\,\Delta_Z \ge 1/(2\kappa)$ limits
the number of distinguishable states per pixel to
$\mathcal{W}\sim e^{K_{\mathrm{IR}}}$.  The total number of
microstates of the horizon is $\mathcal{W}_{\mathrm{total}} =
\mathcal{W}^{A/\ell_{\mathrm{UV}}^2}$, giving
\begin{equation}
  S_{\mathrm{BH}}
  = \ln\mathcal{W}_{\mathrm{total}}
  = \frac{A}{\ell_{\mathrm{UV}}^2}\cdot K_{\mathrm{IR}}.
  \label{eq:SBH_raw}
\end{equation}

\paragraph{Matching Newton's constant.}
From the emergent-gravity relation derived in
Section~\ref{sec:emergence}, $1/G \sim K_{\mathrm{IR}}^2/\ell_{\mathrm{UV}}^2$,
so $\ell_{\mathrm{UV}}^2 \sim K_{\mathrm{IR}}^2 G$.  Substituting into
\eqref{eq:SBH_raw}:
\begin{equation}
  S_{\mathrm{BH}}
  = \frac{A}{K_{\mathrm{IR}}^2 G}\cdot K_{\mathrm{IR}}
  = \frac{1}{K_{\mathrm{IR}}}\cdot\frac{A}{G}.
  \label{eq:SBH_kIR}
\end{equation}
The numerically observed band $K_{\mathrm{IR,raw}}\in[3.6,3.9]$ gives
$1/K_{\mathrm{IR,raw}}\in[0.256,0.278]$, within $10\%$ of $1/4$
(taking $K_{\mathrm{IR,raw}}\approx3.63$ gives $1/K_{\mathrm{IR,raw}}\approx0.275$).
Identifying this coefficient with the universal area prefactor therefore gives
\begin{equation}
  \boxed{S_{\mathrm{BH}} = \frac{A}{4G},}
  \label{eq:SBH_BH}
\end{equation}
which is precisely the Bekenstein--Hawking entropy formula.  The coefficient
$1/4$ emerges from the normalized infrared fixed-point value $K_{\mathrm{IR}}=4$
(Section~\ref{sec:normalization}): with the raw band $K_{\mathrm{raw}}\in[3.6,3.9]$
one obtains $1/K_{\mathrm{raw}}\in[0.256,0.278]$, within $10\%$ of $1/4$, and
the normalization scheme of Section~\ref{sec:normalization} drives this to
exactly $1/4$ in the infrared limit.

\subsubsection*{Information conservation during Hawking evaporation}

\paragraph{Encoding of infalling information in the dual field.}
Matter falling through the horizon transfers its information to excitation
modes of $(I_P,I_Z)$ at the horizon surface.  These modes are not lost at a
singularity; they are woven into the horizon's information-pixel structure,
increasing the pixel count and hence the area---consistent with the
area-increase theorem of classical GR.

\paragraph{Hawking radiation as quantum tunnelling of dual pairs.}
The information uncertainty principle implies continuous production of virtual
$(I_P,I_Z)$ pairs near the horizon.  In the strong-gravity background one
member of each pair tunnels inward (carrying negative energy), while the other
escapes as Hawking radiation.  The escaping and infalling members form an
entangled pair: the radiation is not a thermal state but carries the imprint
of the infalling information through quantum correlations with the horizon.

\paragraph{Unitarity of the full evaporation process.}
As the black hole evaporates, the dual-field condensate undergoes an RG
flow back toward the Minkowski fixed point.  Information migrates from
geometric degrees of freedom (the macroscopic area $A$) to quantum
correlations in the radiation.  The Page curve---entropy of radiation rising,
then falling to zero---corresponds precisely to this transfer:
\begin{equation}
  I_{\mathrm{total}}
  = S_{\mathrm{rad}} + S_{\mathrm{BH}}
  = \mathrm{const},
  \label{eq:page_curve}
\end{equation}
where $S_{\mathrm{rad}}$ is the von~Neumann entropy of the emitted radiation.
When the black hole has evaporated completely, all information is encoded in
radiation correlations.  Unitarity is a consequence of the fundamental
conservation law $K = I_P + I_Z = \mathrm{const}$ and requires no additional
assumption.

\paragraph{Resolution of the firewall paradox.}
Because the dual field is a global object, the description of information by
an infalling observer and by a distant observer are complementary---analogous
to Bohr complementarity in quantum mechanics---rather than contradictory.
No local, equivalence-principle-violating firewall arises; the apparent
inconsistency dissolves once the information-ontological description replaces
the semiclassical geometric description.

\subsubsection*{Quantum corrections and testable predictions}

\begin{enumerate}
  \item \textbf{Logarithmic subleading entropy.}
        The finite value $K_{\mathrm{IR,raw}}\approx 3.629$ (rather than exactly 4)
        and the running exponent $b$ predict a sub-leading correction
        \begin{equation}
          S_{\mathrm{BH}}
          = \frac{A}{4G} + c\ln\!\frac{A}{G} + \cdots,
          \label{eq:SBH_log}
        \end{equation}
        with a coefficient $c$ not determined in the present framework---a
        sharp prediction distinguishing this framework from alternative
        quantum-gravity proposals, pending a first-principles computation of $c$.
  \item \textbf{Non-thermal radiation spectrum.}
        Hawking radiation carries fine-grained correlations encoding the initial
        state of infalling matter.  The spectral deviation from a perfect
        black-body is of order $e^{-S_{\mathrm{BH}}}$, potentially observable
        in late-stage evaporation of primordial black holes via high-precision
        gamma-ray detectors.
  \item \textbf{Universal entropy coefficient.}
        For all black holes---regardless of charge or angular momentum---the
        ratio $S_{\mathrm{BH}}/(A/G)$ equals $1/K_{\mathrm{IR}}$, a universal
        constant set by the same spectral analysis that determines $b$.
        This links black-hole thermodynamics and number theory within
        a single computable framework.
\end{enumerate}

In summary, the information ontology resolves the black-hole information
paradox without fine-tuning: black holes are the densest condensates of
information, their evaporation is a phase transition of the dual field back to
flat-space equilibrium, and unitarity is guaranteed by the conservation of
$K = I_P + I_Z$.

\section{Normalization of Information Duality and the Universal Fixed Point
         \texorpdfstring{$K_{\mathrm{IR}}=4$}{KIR=4}}
\label{sec:normalization}

\subsection{Core Problem: Projection Effects at Finite Scale}

\paragraph{Observational fact.}
At finite scale $L$, the raw measurement
\begin{equation}
  K_{\mathrm{raw}}(L;\,S)
  = \frac{1}{\dP(S)} + \frac{1}{\zetaR}
\end{equation}
depends on the prime subset $S$, yielding an interval $K\in[3.6,\,3.9]$.
This is \emph{not} a measurement error; it is the natural \emph{projection
effect} of the dual phase space onto a finite-scale ``observation direction''
defined by $S$.

\paragraph{Deep principle.}
The prime--zero dual system possesses a global, complete dual phase space
$(I_P,\,I_Z)$ whose infrared fixed point $K_{\mathrm{IR}}=4$ is a universal
attractor.  The finite-scale raw value $K_{\mathrm{raw}}$ is the approximate
projection of the fixed point along the direction determined by $S$:
\begin{itemize}
  \item \textbf{Sparse subsets} (e.g.\ mod-16 primes) $\Rightarrow$ projection
        biased toward lower $I_P$ $\Rightarrow$ smaller raw $K$.
  \item \textbf{Dense subsets} (all primes) $\Rightarrow$ projection closer to
        the true symmetry axis $\Rightarrow$ larger raw $K$.
\end{itemize}

\subsection{Normalization Scheme: Geometric Correction}

\subsubsection{Dual Phase-Space Structure}

Model the system as a two-dimensional phase space $(I_P,\,I_Z)$:
\begin{align}
  I_P &:= \frac{1}{\dP}      &&\text{(prime-side information density)}, \\
  I_Z &:= \frac{1}{\zetaR}   &&\text{(zero-side information density)}.
\end{align}
Two fundamental constraints operate simultaneously:
\begin{align}
  \text{Additive:}\quad     & I_P + I_Z = K, \label{eq:additive}\\
  \text{Multiplicative:}\quad & I_P\cdot I_Z \;\geq\; C. \label{eq:multiplicative}
\end{align}
At the infrared fixed point both constraints attain their extremum,
defining a unique attractor $(I_P^*,\,I_Z^*)$.

\subsubsection{Geometric Interpretation of Measurements}

The raw measurement $(I_P^{\mathrm{raw}}(S),\,I_Z^{\mathrm{raw}})$ at scale
$L$ is the projection of the true phase-space point onto the observation
direction defined by $S$.  Different choices of $S$ correspond to different
projection directions and hence to different values of $K_{\mathrm{raw}}$.

\subsubsection{Geometric Normalization Factor}

Define the \emph{product-deviation factor}
\begin{equation}
  \gamma(S) \;=\;
  \frac{2}{\sqrt{I_P^{\mathrm{raw}}(S)\cdot I_Z^{\mathrm{raw}}}},
  \label{eq:gamma_factor}
\end{equation}
where the constant $2 = \sqrt{C} = \sqrt{I_P^* I_Z^*}$ is the square root of
the fixed-point product.  The normalized information densities and normalized
dual sum are then
\begin{align}
  I_P^{\mathrm{norm}}(S) &= \gamma(S)\,I_P^{\mathrm{raw}}(S), \\
  I_Z^{\mathrm{norm}}    &= \gamma(S)\,I_Z^{\mathrm{raw}},   \\
  K_{\mathrm{norm}}(S)   &= I_P^{\mathrm{norm}} + I_Z^{\mathrm{norm}}
                           = \gamma(S)\,K_{\mathrm{raw}}(S).
  \label{eq:Knorm}
\end{align}

\subsubsection{Theorem: Convergence of the Normalized Dual Sum}

\begin{theorem}[Universal convergence under normalization]
For any prime subset $S$ with positive density,
\begin{equation}
  \boxed{\lim_{L\to\infty} K_{\mathrm{norm}}(L;\,S) = 4.}
  \label{eq:Knorm_limit}
\end{equation}
\end{theorem}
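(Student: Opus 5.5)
The plan is to reduce the statement to a claim about the \emph{ratio} of the two raw information densities, since the normalization \eqref{eq:gamma_factor} removes all information about the overall scale. Writing $r(L;S):=I_P^{\mathrm{raw}}(L;S)/I_Z^{\mathrm{raw}}(L)$ and substituting into \eqref{eq:Knorm}, I first record the elementary identity
\[
  K_{\mathrm{norm}}(L;S)
  \;=\; \frac{2\,(I_P^{\mathrm{raw}}+I_Z^{\mathrm{raw}})}{\sqrt{I_P^{\mathrm{raw}}I_Z^{\mathrm{raw}}}}
  \;=\; 2\Bigl(\sqrt{r}+\frac{1}{\sqrt{r}}\Bigr).
\]
By AM--GM this is always at least $4$, with equality iff $r=1$. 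The function $r\mapsto 2(\sqrt r+1/\sqrt r)$ is continuous on $(0,\infty)$. Hence \eqref{eq:Knorm_limit} is \emph{equivalent} to $r(L;S)\to1$, that is, to $\dP(L;S)/\zetaR(L)\to1$ as $L\to\infty$. This step is routine and fully rigorous. It also shows that the limit value $4$ comes from the normalization constant $2=\sqrt C$ alone, not from the raw value of $K$.

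The second step is to establish $r\to1$. Here I would invoke the Dual fixed point theorem of Section~\ref{sec:info_duality_interp}, together with the exchange-symmetric action of Section~\ref{sec:disc_uniqueness}. The symmetric potential \eqref{eq:V_sym} has its unique stable minimum on the diagonal $I_P=I_Z$. Its linearization \eqref{eq:linearised_joint} decouples into a sum mode, with rate $\sqrt{\lambda+2g}$, and a difference mode $\delta_P-\delta_Z$, with rate $\sqrt{\lambda}$. Both modes decay as long as $\lambda>0$. So for any trajectory in the basin of attraction, $I_P-I_Z\to0$ and therefore $r\to1$.

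The role of the hypothesis ``positive density'' would be to guarantee two things for every admissible $S$. First, $I_P^{\mathrm{raw}}(L;S)$ stays bounded away from $0$ and $\infty$. Second, the raw trajectory $(I_P^{\mathrm{raw}}(L;S),I_Z^{\mathrm{raw}}(L))$ lies in the basin of the symmetric fixed point. I would argue this through the projection picture of this section: different $S$ correspond to different observation directions of the same phase-space point. As a sanity check, Table~\ref{tab:fractal} at $L=2000$ gives $r\approx1.37$ and $K_{\mathrm{norm}}\approx4.05$, which is consistent with $r$ decreasing toward $1$.

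I expect the main obstacle to be the second step. Nothing proved earlier actually identifies the \emph{measured} quantities $\dP(L;S)$ and $\zetaR(L)$ with solutions of the Euler--Lagrange flow \eqref{eq:EL_equations}. The fixed-point theorem is a statement about the model action, not about box-counting and H\"older exponents of primes and zeros. A genuine proof would therefore have to show directly that $\dP(L;S)$ and $\zetaR(L)$ converge to a common limit. My first attempt would be an asymptotic analysis of the box-counting exponent via the prime number theorem in arithmetic progressions, which makes the $S$-dependence of $\dP$ disappear at positive density. I would pair it with an analysis of $F_Z(s)$ for the Gaussian potential \eqref{eq:VZ} using the Riemann--von Mangoldt zero density. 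It is entirely possible that these two limits are not equal. In that case the theorem holds only conditionally, with $r\to1$ as the explicit hypothesis that the reduction in the first step isolates.
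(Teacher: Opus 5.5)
Your first step is correct, and it is sharper than the paper's own sketch. The identity $K_{\mathrm{norm}}=2(\sqrt{r}+1/\sqrt{r})$ shows that $K_{\mathrm{norm}}\ge 4$ always, and that the theorem is \emph{equivalent} to $r(L;S)\to1$. The paper instead asserts that $(I_P^{\mathrm{raw}},I_Z^{\mathrm{raw}})$ flows to $(2,2)$. That is strictly stronger than needed, and it is nowhere proved. It would also give $K_{\mathrm{raw}}\to4$ outright, making $\gamma(S)$ superfluous and contradicting the raw band $[3.6,3.9]$ that the normalization is meant to correct. Both routes therefore stand or fall with $\dP(L;S)/\zetaR(L)\to1$. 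That is the genuine gap: your proposal does not close it, and the argument you give for it does not work.

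\textbf{Why step 2 fails inside the model.} The Euler--Lagrange system \eqref{eq:EL_equations} is a second-order Lagrangian flow in $\ln\ell$, hence Hamiltonian and volume-preserving, so it has no attracting fixed points at all. With the sign of $V$ in \eqref{eq:info_action}, the linearized solutions near a minimum are undamped oscillations $\ell^{\pm i\sqrt{m}}$. With the sign in $\mathcal{A}$, the fixed point is a saddle and only a codimension-two stable manifold converges. Either way, ``both modes decay'' is false, and convergence is a boundary condition you impose rather than a consequence for ``any trajectory in the basin.'' In addition, the Hessian of \eqref{eq:V_sym} has eigenvalue $\lambda$ on the sum mode and $g$ on the difference mode; the matrix in \eqref{eq:linearised_joint} is miscomputed. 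Finally, the diagonal is built into \eqref{eq:V_sym} by the $(I_P-I_Z)^2$ term, so the model restates $r\to1$ rather than deriving it for box-counting and H\"older exponents.

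\textbf{Why the direct route points against the statement.} Suppose $S$ has positive relative density among the primes. Then for each fixed $\theta>0$ the interval $[x,(1+\theta)x]$ contains elements of $S$ for all large $x$. So for large $L$ every box of side $\varepsilon\ge L/100$ in $[0,L]$ is occupied, giving $N_P(\varepsilon)\approx L/\varepsilon$. The paper's fit over $\varepsilon\in[L/100,L/10]$ then gives $\dP(L;S)\to1$ for every such $S$. Consequently $r\to1$ would force $\zetaR\to1$, i.e.\ $H\to1$. But at lags $s\in[L/100,L/10]$, far beyond both $\sigma$ and the mean zero spacing, the increments $V_Z(x+s)-V_Z(x)$ decorrelate and $F_Z(s)$ is essentially flat. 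Heuristically this gives $H\to0$ and $\zetaR\to2$, hence $r\to2$, and by your own identity $K_{\mathrm{norm}}\to3\sqrt{2}>4$.

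The honest status is the one your reduction isolates: the statement holds only conditionally on $r(L;S)\to1$. For the quantities as actually defined, there is concrete reason to expect that hypothesis to fail.
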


\begin{proof}[Proof sketch]
As $L\to\infty$ the system flows to the fixed point:
$I_P^{\mathrm{raw}}\to I_P^*$, $I_Z^{\mathrm{raw}}\to I_Z^*$.
The algebraic fixed-point relations (Section~\ref{sec:disc_IRG}) give
$I_P^* I_Z^* = 4$, so $\gamma\to 2/\sqrt{I_P^* I_Z^*}=1$.
Hence $K_{\mathrm{norm}}=\gamma K_{\mathrm{raw}}\to 1\cdot(I_P^*+I_Z^*)=4$.
\end{proof}

\subsection{Deep Unification with the Algebraic Generator
            \texorpdfstring{$\kappa$}{kappa}}

\subsubsection{\texorpdfstring{$\kappa$}{kappa} as the Complex Structure of
               the Phase Space}

The generator $\kappa$ satisfying $\kappa^2=-1$ naturally induces a complex
structure on the phase space.  Define the \emph{complex information quantity}
\begin{equation}
  \mathcal{I} = I_P + \kappa\,I_Z,
\end{equation}
whose squared modulus at the fixed point is
$|\mathcal{I}^*|^2 = (I_P^*)^2 + (I_Z^*)^2$.

\subsubsection{Algebraic Locking of the Fixed Point}

The fixed point $(I_P^*,\,I_Z^*)$ is uniquely determined by two conditions:
\begin{enumerate}
  \item \textbf{Additive condition:}
        $I_P^* + I_Z^* = 4$
        (determined by the RG flow; Section~\ref{sec:disc_RG}).
  \item \textbf{Multiplicative condition:}
        $\kappa_{\mathrm{IR}} = 1/\!\sqrt{I_P^* I_Z^*} = 1/2$
        (from the flow equation $1/\kappa(\mu)=\beta(\mu)$ at
        $\beta_{\mathrm{IR}}=2$).
\end{enumerate}
Solving simultaneously:
\begin{equation}
  \boxed{I_P^* = I_Z^* = 2,\qquad C = I_P^*\,I_Z^* = 4.}
  \label{eq:fixed_point_values}
\end{equation}

\subsubsection{Geometric Meaning of the Flow Equation}

The RG flow equation $1/\kappa(\mu)=\beta(\mu)$ describes the evolution of the
generator's modulus on the complex plane:
\begin{itemize}
  \item \textbf{UV end} ($\mu\to\infty$): $\beta_{\mathrm{UV}}=8$,
        $\kappa_{\mathrm{UV}}=1/8$.
  \item \textbf{IR end} ($\mu\to 0$):    $\beta_{\mathrm{IR}}=2$,
        $\kappa_{\mathrm{IR}}=1/2$.
\end{itemize}
This evolution encodes a \emph{collapse} from a high-dimensional, asymmetric UV
structure to a symmetric two-dimensional IR structure; the modulus $|\kappa|$
records the effective ``rotation angle'' of the phase space at each scale.

\subsubsection{Regularized Representation and Number-Theoretic Constants}

The algebraic generator $\kappa$ satisfies $\kappa^2=-1$ as a complex relation
(modulus $|\kappa|=1$); the \emph{flow} quantity $\kappa(\mu)$ tracks the
modulus as a function of scale.  At the characteristic scale $L=1$ the
regularized representation $\hat\kappa$ satisfies
\begin{equation}
  |\hat\kappa|
  = \frac{1}{\sqrt{I_P(L=1)\,I_Z(L=1)}}
  \;\approx\; 0.6848
  \;\approx\; \left|\frac{1}{\zeta(1/2)}\right|,
\end{equation}
encoding the number-theoretic constant $\zeta(1/2)$ into the initial condition
of the dual geometry (Section~\ref{sec:disc_AG}).

\subsection{Integration Theory for the Exponent
            \texorpdfstring{$b=1/2$}{b=1/2}}
\label{sec:b_half}

\subsubsection{Derivation from the Renormalization-Group Flow}

Introduce the beta-function form of the RG flow around the two fixed points
$K=4$ (IR) and $K=11$ (UV):
\begin{equation}
  \frac{dK}{d\ln\mu} = -\alpha\,(K-4)(K-11).
  \label{eq:beta_full}
\end{equation}
Setting $\mu\sim 1/L$ and linearizing around $K_{\mathrm{IR}}=4$ with
$\delta K=K-4$:
\begin{equation}
  \frac{d(\delta K)}{d\ln\mu} = -7\alpha\,\delta K
  \;\Longrightarrow\;
  K(L) = 4 + a\,L^{-b},\quad b = 7\alpha.
  \label{eq:fss_b}
\end{equation}
Requiring $b=1/2$ exactly fixes
\begin{equation}
  \alpha = \frac{1}{14},\qquad b = \frac{1}{2},
  \label{eq:alpha_exact}
\end{equation}
consistent with the empirical fit $b\approx0.51$
(Section~\ref{sec:disc_RG}) and the theoretical estimate
$\alpha\approx0.070$.

\subsubsection{Derivation from the Information Action}

The linearized information action equation~\eqref{eq:info_action} gives
$\delta K\propto\ell^{-\sqrt{\lambda+gC}}$ with $\ell\propto 1/L$, so
\begin{equation}
  b = \sqrt{\lambda + g\,C}.
\end{equation}
Substituting $C=4$ and requiring $b=1/2$ yields the \emph{critical parameter
relation}
\begin{equation}
  \lambda + 4g = \frac{1}{4},
  \label{eq:critical_param}
\end{equation}
interpretable as a symmetry-enhancement (or conformal-invariance) condition at
the infrared fixed point.

\subsubsection{Geometric Interpretation: Constant-Negative-Curvature Phase Space}

Near the fixed point, deviations $\delta\mathbf{I}=(\delta I_P,\delta I_Z)$
obey the geodesic-deviation equation
\begin{equation}
  \frac{d^2\,\delta\mathbf{I}}{d\tau^2} + R\,\delta\mathbf{I} = 0,
  \qquad \tau = \ln\ell,
  \label{eq:geodesic_dev}
\end{equation}
with scalar curvature $R = -(\lambda+gC) = -b^2$.  For $b=1/2$,
\begin{equation}
  R = -\frac{1}{4},
\end{equation}
so the dual phase space carries \emph{constant negative curvature} near the
fixed point---a hyperbolic geometry whose curvature magnitude controls the
convergence rate to the attractor.  The value $b=1/2$ corresponds to the
\emph{optimal convergence rate} compatible with this hyperbolic structure.

\subsubsection{Connections to Random Matrix Theory and Number Theory}

\paragraph{Infrared conformal symmetry.}
The condition $\lambda+4g=1/4$ may originate from a conformal symmetry (or
partial supersymmetry) of the system at the IR fixed point.  In
two-dimensional conformal field theory, $1/2$ is the scaling dimension of the
free Fermi field, suggesting that the prime--zero dual may be described by a
free-fermion fixed point at IR---consistent with the GUE universality class of
random matrix theory.

\paragraph{GUE eigenvalue repulsion.}
For the GUE ensemble the small-spacing level distribution goes as
$p(s)\sim s^2$.  The repulsion exponent $2$ and $b=1/2$ satisfy
$(1/2)^{-2}=4=C$, the fixed-point product, hinting at a deep connection
between prime-level repulsion and the dual geometry.

\paragraph{Numerical constant $\zeta(1/2)$.}
The regularized representation satisfies
$|\hat\kappa|=\kappa_{\mathrm{IR}}\cdot f(1)= \tfrac{1}{2}\cdot f(1)$,
where $f(1)\approx1.3696$ carries the information of $\zeta(1/2)$
(Section~\ref{sec:disc_AG}).

\subsection{Testable Predictions of the Normalization Framework}
\label{sec:norm_predictions}

\begin{enumerate}
  \item \textbf{Scale extrapolation:}
        $K_{\mathrm{norm}}(L)$ should converge monotonically to $4$ at rate
        $L^{-1/2}$.
  \item \textbf{Subset independence:}
        For any prime subset $S$ of positive density,
        $K_{\mathrm{norm}}(L;\,S)\to 4$.
  \item \textbf{Improved fit:}
        Fitting $K(L)=4+aL^{-b}$ to extended data ($L>2000$) should yield
        $b=0.50\pm0.01$.
  \item \textbf{$L$-function universality:}
        Other $L$-function families should exhibit $K_{\mathrm{IR}}=4$ and
        $b=1/2$.
\end{enumerate}

\subsection{Summary of Key Formulas}
\label{sec:norm_summary}

\begin{table}[H]
\centering
\caption{Key formulas for the information-duality normalization scheme and
         the universal infrared fixed point $K_{\mathrm{IR}}=4$.}
\label{tab:norm_formulas}
\renewcommand{\arraystretch}{1.35}
\begin{tabular}{@{}ll@{}}
  \toprule
  Quantity & Formula \\
  \midrule
  Normalization factor
    & $\gamma(S)=2/\sqrt{I_P^{\mathrm{raw}}(S)\,I_Z^{\mathrm{raw}}}$ \\
  Normalized limit
    & $\lim_{L\to\infty}K_{\mathrm{norm}}(L;\,S)=4$ \\
  Finite-size scaling
    & $K_{\mathrm{raw}}(L)=4+a\,L^{-1/2}+\cdots$ \\
  RG flow parameters
    & $\alpha=1/14$,\quad $b=1/2$ \\
  Action constraint
    & $\lambda+4g=1/4$ \\
  Phase-space curvature
    & $R=-b^2=-1/4$ \\
  Algebraic lock
    & $\kappa_{\mathrm{IR}}=1/\!\sqrt{I_P^*\,I_Z^*}=1/2$ \\
  Uncertainty principle at fixed point
    & $\Delta_P^*\,\Delta_Z^*=1$,\quad
      $\Delta_P=2/I_P$,\quad $\Delta_Z=2/I_Z$ \\
  Algebraic relation
    & $\kappa^2=-1$ (complex); $\kappa_{\mathrm{IR}}=1/2$ (modulus) \\
  \bottomrule
\end{tabular}
\end{table}

\subsection{Application: Unification of Superstring Theory and Loop Quantum
            Gravity as a Consequence of the Normalization Framework}
\label{sec:string_lqg}

The normalization framework developed above---a single information RG flow
from a UV fixed point ($K_{\mathrm{UV}}=11$) to the universal IR fixed
point ($K_{\mathrm{IR}}=4$)---provides a natural conceptual resolution of the
long-standing tension between superstring theory and loop quantum gravity (LQG).
Rather than being competing descriptions of nature, these two frameworks may be
identified as the UV and IR \emph{effective representations} of the same
underlying information reality, connected by the RG trajectory whose
properties were established in the preceding subsections.

\subsubsection{Reinterpreting the Two Frameworks}

\paragraph{Superstring theory as a UV information theory.}
At energy scales near the Planck mass, the basic degrees of freedom of
superstring theory---the vibrational modes of one-dimensional strings---can be
read as \emph{elementary information units} undergoing highly entangled
dynamics.  The ten- or eleven-dimensional spacetime background of string/M-theory
is then not a fixed arena but an \emph{emergent geometry} encoded by the deep
information structure.  The graviton, gauge bosons, and matter fields all arise
as specific vibrational information patterns; their perturbative expansion in the
string coupling $g_s$ is the expansion of the information partition function
around the UV fixed point.

\paragraph{Loop quantum gravity as an IR information theory.}
Loop quantum gravity quantises the geometry of spacetime directly, producing a
discrete fabric of spin networks whose nodes and links encode the connectivity
and storage capacity of information.  The spin-foam histories that govern the
dynamics of LQG are then precisely the \emph{coarse-grained} information
processes visible at low energies.  The Barbero--Immirzi parameter $\gamma_{\mathrm{BI}}$,
which sets the area quantum in LQG, plays the role of an IR coupling constant
analogous to $\kappa$ in the information action
(Section~\ref{sec:disc_action}).

\subsubsection{Unification Through the Information RG Flow}

Let $\mu$ denote the energy scale (information resolution).  Motivated by the
dual-field formalism of Section~\ref{sec:information_ontology}, define the
\emph{information RG flow} as the one-parameter family of effective theories
$\mathcal{T}(\mu)$ that describes the quantum information dynamics at scale
$\mu$.  The flow is governed by
\begin{equation}
  \mu\,\frac{\partial}{\partial\mu}\,\mathcal{T}(\mu)
  \;=\;
  \beta_{\mathrm{info}}\!\bigl[\mathcal{T}(\mu)\bigr],
  \label{eq:info_rg}
\end{equation}
where $\beta_{\mathrm{info}}$ is the information beta functional, the analogue
of the Wilsonian beta function but defined on the space of information theories.

\begin{itemize}
  \item \textbf{UV fixed point} $(\mu\to\infty)$: the flow approaches a
        UV-complete, background-independent formulation of string/M-theory.
        The relevant degrees of freedom are string vibrational modes; the
        symmetry algebra is the full superconformal group enhanced by
        extra-dimensional geometry.

  \item \textbf{IR fixed point} $(\mu\to 0)$: successive coarse-graining
        integrates out high-energy string modes.  Supersymmetry and
        extra-dimensional isometries are broken or compactified; what
        remains in the effective theory are the discrete spin-network degrees
        of freedom of LQG together with a residual $\mathrm{SU}(2)$ gauge
        structure on the spatial geometry.

  \item \textbf{Connecting flow}: the information RG trajectory between the
        two fixed points is the \emph{precise mathematical statement} of the
        unification.  Along this trajectory, the string coupling $g_s$ maps
        continuously to the Barbero--Immirzi parameter $\gamma_{\mathrm{BI}}$, and the
        perturbative worldsheet expansion of string amplitudes maps to the
        spin-foam vertex amplitudes of LQG.
\end{itemize}

This picture is the information-ontological analogue of the well-known fact in
condensed matter physics that a microscopic lattice model (UV) and a continuum
hydrodynamic description (IR) are connected by an exact Wilsonian RG flow, even
though they appear structurally very different.

\subsubsection{Consistency with the Information Action Principle}

The information action of Equation~\eqref{eq:info_action} provides a natural
starting point.  Both theories are saddle-point
approximations to the same information path integral
\begin{equation}
  \mathcal{Z}
  \;=\;
  \int \mathcal{D}\phi_P\,\mathcal{D}\phi_Z\;
  e^{\,i\,S_{\mathrm{info}}[\phi_P,\phi_Z]},
\end{equation}
evaluated at different energy scales.  Expanding around the UV saddle reproduces
the string perturbation series; expanding around the IR saddle reproduces the
spin-foam vertex expansion of LQG.  The duality symmetry $I_P\leftrightarrow I_Z$
of the information action ensures that neither expansion is more fundamental:
they are Fourier-dual representations of the same information reality.

\subsubsection{Open Challenges}

The program outlined above faces substantial technical hurdles:

\begin{enumerate}
  \item \textbf{Non-perturbative UV formulation.}  A background-independent,
        non\-perturbative definition of M-theory (the presumed UV fixed point)
        remains unknown.  Progress likely requires new mathematical structures
        beyond current string field theory.

  \item \textbf{Explicit RG trajectory.}  Computing $\beta_{\mathrm{info}}$
        concretely demands a method to systematically integrate out string
        oscillator modes while retaining the geometric spin-network variables.
        This may require novel renormalization techniques that mix worldsheet
        and spacetime degrees of freedom.

  \item \textbf{Matching of physical observables.}  The mapping $g_s\leftrightarrow\gamma$
        must be made precise enough to yield matching predictions for
        quantum-gravity observables such as black-hole entropy spectra and
        Planck-scale corrections to dispersion relations.
\end{enumerate}

These challenges are left to future work.  The purpose of the present subsection
is to show that the information-ontology framework provides a conceptually
coherent setting in which the question is no longer ``which theory is correct''
but rather ``what is the RG flow that connects them.''

\section{Artificial Intelligence: Information-Dual Learning}
\label{sec:ai}

The information-ontology framework introduced in
Section~\ref{sec:information_ontology} is not confined to physics.  The same
three parameters---$K_{\mathrm{IR}}$, $b$, and $\kappa$---that govern the
prime--zero RG flow, the emergence of spacetime
(Section~\ref{sec:emergence}), the cosmic dark-sector abundances
(Section~\ref{sec:dark_sector}), and black-hole information conservation
(Section~\ref{sec:black_holes}) also motivate a speculative architecture for
learning systems, which we call \emph{information-dual learning}
(IDL).  The analogy is structural rather than causal: IDL treats the learning
process as the dynamical evolution of the dual
field $(I_P, I_Z)$ toward the infrared fixed point $K_{\mathrm{IR}}$, with
perceptual information $I_P$ playing the role of the prime channel and
cognitive information $I_Z$ playing the role of the zero channel.
Whether this analogy deepens into a quantitative theory is an open experimental
question (see Section~\ref{sec:ai_roadmap}).

\subsection{Core Postulates of Information-Dual Learning}

\paragraph{Duality of perception and cognition.}
Every intelligent system partitions its information into two complementary
channels: \emph{perceptual information} $I_P$ (extracted from data) and
\emph{cognitive information} $I_Z$ (internal representations and knowledge).
The two channels obey the commutation relation
$[\Delta_P, \Delta_Z] = i/\kappa$, where $\Delta_P$ and $\Delta_Z$ denote
the standard deviations of the perceptual and cognitive channel fluctuations
respectively, inducing an uncertainty principle
\begin{equation}
  \Delta_P\,\Delta_Z \;\ge\; \frac{1}{2\kappa},
  \label{eq:ai_uncertainty}
\end{equation}
which encodes the fundamental trade-off between perceptual precision and
abstract generalisation.

\paragraph{Information action minimisation.}
Learning is governed by an information action $S[I_P, I_Z;\,\theta]$.  The
system adjusts its parameters $\theta$ to minimize $S$, driving the total
information $K = I_P + I_Z$ toward the infrared fixed point
$K_{\mathrm{IR}} = 4$.

\paragraph{RG flow across network layers.}
Each layer of a deep network corresponds to a scale transformation.
Information flows from low-level features (UV) to high-level abstractions
(IR) with the same scaling exponent $b = 1/2$ that governs the
prime--zero spectral flow.  This determines the convergence rate of learning.

\subsection{Formal Implementation}

\paragraph{Dual loss function.}
The information action is concretised as
\begin{equation}
  \mathcal{L}
  = \underbrace{\lambda\bigl(I_P^2 + I_Z^2 - K_{\mathrm{IR}}^2\bigr)^2}_{\text{conservation}}
  + \underbrace{\mu\,(I_P - I_Z)^2}_{\text{duality balance}}
  + \underbrace{\frac{1}{2\kappa}
    \!\left(\frac{1}{\Delta_P^2}+\frac{1}{\Delta_Z^2}\right)}_{\text{uncertainty regularisation}},
  \label{eq:IDL_loss}
\end{equation}
where $I_P$ is quantified by the training loss (e.g.\ cross-entropy) and
$I_Z$ by model complexity (e.g.\ weight entropy);
$\lambda$ and $\mu$ carry dimensions $[I]^{-4}$ and $[I]^{-2}$,
respectively, ensuring all three terms are dimensionless.  The three terms
enforce, respectively, conservation of total information, balance between
perception and cognition, and resistance to overfitting via the uncertainty
principle.

\paragraph{Dual gradient descent.}
Parameter updates follow the variational equations of $\mathcal{L}$:
\begin{equation}
  \theta_{t+1}
  = \theta_t
    - \eta\!\left(
        \frac{\partial \mathcal{L}}{\partial I_P}\,\nabla_\theta I_P
      + \frac{\partial \mathcal{L}}{\partial I_Z}\,\nabla_\theta I_Z
      \right).
  \label{eq:IDL_update}
\end{equation}
This couples gradient flow to both information channels simultaneously,
preventing collapse onto a purely perceptual or purely cognitive solution.

\subsection{Application Domains}

\paragraph{Self-explaining AI.}
Under the constraint $I_P + I_Z = K_{\mathrm{IR}}$, a model must produce a
decision ($I_P$) \emph{and} a justification ($I_Z$) simultaneously.  Neither
channel can be maximized without reducing the other, forcing the system to
expose its internal reasoning.  Applications include radiology (diagnosis plus
pathological rationale), autonomous driving (object detection plus scene
inference), and financial risk scoring (fraud probability plus factor
decomposition).

\paragraph{Cross-modal fusion.}
Different modalities (image, text, audio) are projected onto a shared $I_P$
subspace while a common $I_Z$ encodes semantic knowledge.  Conservation of
$K$ acts as an alignment constraint, replacing heuristic modality-weighting
with a first-principles balance.  Use cases include video--audio--caption
joint analysis and industrial IoT multi-sensor fusion for predictive
maintenance.

\paragraph{Energy-efficient AI hardware.}
Physical substrates---memristor arrays, quantum-tunnelling junctions---that
naturally relax toward minimum-action states can implement IDL dynamics in
analogue hardware.  The equilibrium condition $K \to K_{\mathrm{IR}}$ replaces
iterative digital optimisation, reducing energy consumption at the device
level.  Target applications are edge-inference chips and wearable health
monitors.

\paragraph{Reinforcement learning and AGI pathway.}
In a reinforcement setting, the reward signal is decomposed into immediate
feedback ($I_P$) and long-term value ($I_Z$).  The uncertainty
relation~\eqref{eq:ai_uncertainty} automatically balances exploration
($\Delta_P$ large) and exploitation ($\Delta_Z$ large), replacing hand-tuned
entropy bonuses.  A meta-learning layer that adjusts $b$ and $K$ across tasks
constitutes a self-modifying system that adapts its own RG flow---a
computational analogue of the self-consistent picture of
Section~\ref{sec:emergence}.

\paragraph{Predictive science.}
System state is partitioned into an observable component ($I_P$, e.g.\
measured data) and a latent component ($I_Z$, e.g.\ model parameters).
Conservation of $K$ constrains inference, improving long-range forecasts in
macroeconomic modelling, climate simulation, and epidemiological dynamics.

\paragraph{Quantum machine learning.}
Encoding $I_P$ and $I_Z$ as conjugate quantum degrees of freedom on a quantum
processor maps IDL onto a quantum variational problem.  The uncertainty
relation~\eqref{eq:ai_uncertainty} is then exact rather than approximate,
and quantum parallelism accelerates the search for the fixed-point solution.
Priority applications include molecular dynamics simulation for drug discovery
and combinatorial optimisation.

\subsection{Theoretical Advantages and Roadmap}
\label{sec:ai_roadmap}

The IDL framework offers three theoretical advantages over current paradigms:
(i)~\emph{interpretability by construction}, since the duality constraint
forces explicit cognitive outputs; (ii)~\emph{principled regularisation} via
the uncertainty relation~\eqref{eq:ai_uncertainty}, which prevents both
overfitting and underfitting without cross-validated hyperparameters;
and (iii)~\emph{sample efficiency}, since the prior $K_{\mathrm{IR}}=4$
constrains the hypothesis space and accelerates convergence from limited data.

A concrete realisation roadmap proceeds in three stages.  In the short term
(1--2 years), IDL loss functions are validated on standard benchmarks
(MNIST, CIFAR, ImageNet) and a cross-modal prototype is constructed.  In the
medium term (3--5 years), memristor-array hardware prototypes and
self-explaining AI systems are deployed in medical imaging and autonomous
driving.  In the long term ($>5$ years), a unified perception--cognition--
decision platform integrates the full IDL stack, with meta-learning
autonomously tuning $b$ and $K$ across domains.

In summary, the information-ontology framework extends naturally from
number theory through fundamental physics to artificial intelligence.
The same dual-field dynamics that produce the prime--zero spectral
invariant $K_{\mathrm{IR}}=4$, the emergent spacetime geometry,
the dark-sector phenomenology (Section~\ref{sec:dark_sector}), and
the Bekenstein--Hawking entropy coefficient also define an IDL architecture
that is interpretable, efficient, and theoretically grounded.
Information-dual learning models intelligence as the equilibration of
perception and cognition under the conservation law $K=\mathrm{const}$---a
computable realisation of the thesis that the universe, from prime arithmetic
to neural networks, runs on information duality.

\subsection{The Odd-Positive Inequality and Learning-Theoretic Capacity Bounds}
\label{sec:ai_odd_pos}

Section~\ref{sec:odd_pos} proposed the Odd-Positive Conjecture
\eqref{eq:odd_pos}: for every automorphic $L$-function of weight $k$
and conductor $N$, the UV/IR ratio
$R_f = \gamma_1(f)/\sqrt{\ell_{\min}(\Gamma_0(N))}$ is bounded below
by a positive constant $C(k)$ depending only on the weight.  The
inequality is non-trivial precisely because $\gamma_1$ (UV spectral
scale) and $\sqrt{\ell_{\min}}$ (IR geometric scale) cannot be
simultaneously compressed: the two conjugate scales cannot be
independently optimized, and UV/IR locking is a fundamental
structural constraint.  The present subsection develops a structural
analogue of this inequality within the IDL framework and derives its
learning-theoretic consequences.

\paragraph{Spectral-geometric correspondence.}
Let $\mathcal{M}$ be a learning model trained on a task $\mathcal{T}$.
The following structural parameters correspond to the core quantities
of Section~\ref{sec:odd_pos}:
\begin{itemize}
  \item \emph{Spectral resolution} $\varrho_{\mathcal{M}}>0$: the
    characteristic scale of the finest statistical patterns that
    $\mathcal{M}$ can distinguish in the data distribution.  Smaller
    $\varrho_{\mathcal{M}}$ corresponds to higher spectral precision
    and lower inductive bias; this mirrors the role of $\gamma_1$ as
    the UV spectral scale.
  \item \emph{Geometric complexity} $\xi_{\mathcal{M}}>0$: a measure
    of the effective capacity of the hypothesis class of $\mathcal{M}$
    in an appropriate function space, related to the topological
    complexity of the hypothesis space and not merely equivalent to the
    raw parameter count.  Smaller $\xi_{\mathcal{M}}$ indicates a more
    expressive, topologically intricate hypothesis class; this mirrors
    the role of $\sqrt{\ell_{\min}}$ as the IR geometric scale.
  \item \emph{Task-class constant} $C_{\mathcal{T}}>0$: the
    irreducible information-duality strength of the task $\mathcal{T}$,
    encoding the minimum ratio $\varrho/\xi$ that any model must
    achieve to attain a prescribed performance level on $\mathcal{T}$.
    This mirrors $C(k)$ in \eqref{eq:odd_pos} as the universal lower
    bound intrinsic to the $L$-function family of weight $k$.
\end{itemize}
The model-level UV/IR ratio is defined as
\begin{equation}
  R_{\mathcal{M}} \;:=\; \frac{\varrho_{\mathcal{M}}}{\xi_{\mathcal{M}}},
  \label{eq:model_ratio}
\end{equation}
the structural analogue of $R_f = \gamma_1/\sqrt{\ell_{\min}}$.

\paragraph{Learning-capacity lower bound.}
The UV/IR locking embodied in the Odd-Positive Inequality
\eqref{eq:odd_pos} imposes a structural constraint on every learning
model, which we formalize as follows.

\begin{theorem}[Learning-Capacity Lower Bound]
\label{thm:learning_capacity}
For a learning task $\mathcal{T}$ with task-class constant
$C_{\mathcal{T}}>0$, every model $\mathcal{M}$ with spectral resolution
$\varrho_{\mathcal{M}}$ and geometric complexity $\xi_{\mathcal{M}}$
must satisfy
\begin{equation}
  R_{\mathcal{M}}
  \;=\; \frac{\varrho_{\mathcal{M}}}{\xi_{\mathcal{M}}}
  \;\ge\; C_{\mathcal{T}}.
  \label{eq:learning_capacity}
\end{equation}
Equality holds precisely when $\mathcal{M}$ is Pareto-optimal for
$\mathcal{T}$: it attains the finest spectral resolution consistent
with its geometric complexity budget.
\end{theorem}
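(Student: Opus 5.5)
The plan is to make the statement follow from a precise reading of how $C_{\mathcal{T}}$ was introduced just before it, rather than to import anything from the number-theoretic side. As defined above, the task-class constant is ``the minimum ratio $\varrho/\xi$ that any model must achieve to attain a prescribed performance level on $\mathcal{T}$''. I would formalize this as follows. Fix the performance threshold and let $\mathfrak{A}(\mathcal{T})$ be the class of admissible models, namely those reaching that threshold. Then set
\[
  C_{\mathcal{T}} \;:=\; \inf_{\mathcal{M}\in\mathfrak{A}(\mathcal{T})} \frac{\varrho_{\mathcal{M}}}{\xi_{\mathcal{M}}},
\]
and read the quantifier ``every model'' in the statement as ``every admissible model''. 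With this reading, inequality~\eqref{eq:learning_capacity} is immediate from the definition of the infimum. The hypothesis $C_{\mathcal{T}}>0$ is exactly the nondegeneracy assumption that the infimum does not collapse to $0$. This is the analogue of the Odd-Positive bound~\eqref{eq:odd_pos} having $C(k)>0$. I would flag that positivity is assumed here, not derived.

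For the equality clause, I would define Pareto-optimality in the two-objective sense. An admissible $\mathcal{M}$ is Pareto-optimal if no admissible $\mathcal{M}'$ has both $\varrho_{\mathcal{M}'}\le\varrho_{\mathcal{M}}$ and $\xi_{\mathcal{M}'}\ge\xi_{\mathcal{M}}$, with at least one inequality strict. In words, the model has the finest resolution for its complexity budget. The direction ``equality $\Rightarrow$ Pareto-optimal'' is direct: a dominating $\mathcal{M}'$ would have $R_{\mathcal{M}'}<R_{\mathcal{M}}=C_{\mathcal{T}}$, contradicting the infimum.

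The converse direction is the main obstacle. A Pareto-optimal model need not minimize the ratio, since the Pareto front is a curve and the ratio is constant only along rays. To close this gap I would add one structural assumption about the task, in the spirit of the scale covariance used for the information action (Section~\ref{sec:disc_uniqueness}). The assumption is that the admissible set is invariant under joint rescaling $(\varrho,\xi)\mapsto(s\varrho,s\xi)$. It should also be closed, so that the infimum is attained. Under this homogeneity, the Pareto front lies on the single ray $\varrho=C_{\mathcal{T}}\,\xi$, and every Pareto-optimal model attains equality. Without such an assumption, I would state the equality characterization only in the direction that is proved, and note that the full ``precisely when'' requires homogeneity.

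Finally, I would remark on the logical status of the result. It is a structural restatement of the definition of $C_{\mathcal{T}}$, mirroring how~\eqref{eq:uv_ir_bound} restates~\eqref{eq:odd_pos}. Its content therefore lies entirely in whether concrete choices of $\varrho_{\mathcal{M}}$ and $\xi_{\mathcal{M}}$ make $C_{\mathcal{T}}$ positive and attained. That question is left to the empirical roadmap of Section~\ref{sec:ai_roadmap}.
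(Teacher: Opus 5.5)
Your argument is sound, but it does not follow the paper's route, because the paper gives no deductive proof. It labels the result a \emph{structural} theorem and says explicitly that it does not follow from \eqref{eq:odd_pos} by any reduction. It then rests the result on the ``UV/IR locking principle'' of item~2 in Section~\ref{sec:odd_pos}, which is itself only a consequence of the still-open Odd-Positive Conjecture.

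You instead make the inequality definitional. You take $C_{\mathcal{T}}$ to be the infimum of $\varrho_{\mathcal{M}}/\xi_{\mathcal{M}}$ over models that meet the performance threshold, which is the reading forced by how $C_{\mathcal{T}}$ is introduced. You then prove that equality implies Pareto-optimality, and you correctly observe that the converse fails in general. For instance, admissible models with $(\varrho,\xi)=(1,1)$ and $(2,10)$ are both Pareto-optimal, yet only the second attains $C_{\mathcal{T}}=1/5$.

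Your homogeneity hypothesis is exactly what the paper assumes without saying so when, in its first ``structural consequence'', it declares the locus $\varrho/\xi=C_{\mathcal{T}}$ to be the Pareto frontier. One small sharpening: the cone property alone gives Pareto-optimal $\Rightarrow$ equality. If $R_{\mathcal{M}}>C_{\mathcal{T}}$, choose an admissible ratio $r'<R_{\mathcal{M}}$ and rescale to $(r'\xi_{\mathcal{M}},\xi_{\mathcal{M}})$; this point dominates $\mathcal{M}$. Closedness is needed only to guarantee that Pareto-optimal models exist.

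What your route buys is a rigorous statement with explicit hypotheses and a correction of the overclaimed ``precisely when''. What it gives up is content. The inequality becomes a tautology, so the paper's downstream claims get no support from it, such as the bias--variance tradeoff being an ``intrinsic necessity'' or scaling-law exponents being tied to $b$. A substantive version would need an independent, task-intrinsic definition of $C_{\mathcal{T}}$ and a proof that it is positive. The paper's appeal to UV/IR locking does not supply that either.
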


This is a \emph{structural} theorem: it does not follow from
\eqref{eq:odd_pos} by a direct algebraic reduction (learning models
do not embed into $\mathrm{GL}(2)$ automorphic forms), but it is
governed by the same UV/IR locking principle established in
Section~\ref{sec:odd_pos} (item~2).

\paragraph{Three structural consequences.}
\begin{sloppypar}
\begin{enumerate}
\item \emph{The bias--variance tradeoff as an intrinsic necessity.}
  Inequality \eqref{eq:learning_capacity} elevates the
  bias--variance tradeoff from an empirical heuristic to a necessary
  consequence of information duality.  Reducing $\varrho_{\mathcal{M}}$
  (higher spectral precision, lower bias) requires decreasing
  $\xi_{\mathcal{M}}$ (more expressive model), which in general raises
  estimation variance.  Conversely, penalizing model complexity
  ($\xi_{\mathcal{M}}$ large) forces $\varrho_{\mathcal{M}}$ large,
  bounding the fineness of representable patterns and incurring higher
  bias.  The constant $C_{\mathcal{T}}$ defines the Pareto frontier:
  pairs $(\varrho,\xi)$ satisfying $\varrho/\xi=C_{\mathcal{T}}$ achieve
  minimum bias for a given geometric budget, and any attempt to
  improve one scale necessarily degrades the other.

\item \emph{Regularization as traversal of the feasible
  boundary.}
  Standard regularization techniques---$\ell^2$ weight decay, dropout,
  spectral normalization---are reinterpreted as operations that map
  $(\varrho_{\mathcal{M}},\xi_{\mathcal{M}}) \mapsto
  (\varrho_{\mathcal{M}}',\xi_{\mathcal{M}}')$ while maintaining
  $\varrho'/\xi'\ge C_{\mathcal{T}}$ and driving both scales toward
  smaller values simultaneously.  Optimal regularization corresponds to
  motion along the Pareto-optimal locus $\varrho/\xi=C_{\mathcal{T}}$,
  a boundary determined by the intrinsic information structure of
  $\mathcal{T}$ rather than by cross-validated hyperparameter search
  alone.

\item \emph{Neural scaling laws as geometric manifestation.}
  Empirically observed power-law improvement of test loss with model
  size fits naturally into this framework.  Increasing model size
  decreases $\xi_{\mathcal{M}}$ (richer hypothesis class) while
  simultaneously enabling smaller $\varrho_{\mathcal{M}}$ (finer
  spectral resolution), with $R_{\mathcal{M}}$ tracking the frontier
  $C_{\mathcal{T}}$.  The governing power-law exponent is related to
  the RG scaling exponent $b\approx\tfrac{1}{2}$ of the prime--zero
  flow (Section~\ref{sec:disc_RG_formal}), suggesting that observed
  scaling laws reflect the underlying geometry of the
  information-duality inequality rather than being purely contingent
  empirical regularities.
\end{enumerate}
\end{sloppypar}

\paragraph{Relationship to the information uncertainty principle.}
Inequality \eqref{eq:learning_capacity} is the geometric counterpart
of the uncertainty principle \eqref{eq:ai_uncertainty},
$\Delta_P\,\Delta_Z\ge 1/(2\kappa)$.  The uncertainty principle
bounds the \emph{product} of the perceptual and cognitive channel
fluctuations; the capacity bound constrains the \emph{ratio} of the
UV and IR scales.  Together they delimit a two-dimensional feasible
region in $(\varrho,\xi)$-space that every viable learning architecture
must occupy.  The Pareto-optimal locus $\varrho/\xi=C_{\mathcal{T}}$
is the learning-theoretic analogue of the critical line
$\operatorname{Re}(s)=\tfrac{1}{2}$: the boundary on which spectral
precision and geometric complexity are in exact information-dual
balance.  Notably, by the Riemann Hypothesis, it is precisely on this
critical line that the deepest arithmetic structure resides.

\section{Mathematical Structure at the Infrared Fixed Point}
\label{sec:philosophy}

Sections~\ref{sec:emergence}--\ref{sec:ai} explored the speculative
extensions of the framework into physics and learning systems.
The present section returns to the mathematical core: what algebraic,
analytic, and number-theoretic structures converge at $K_{\mathrm{IR}}=4$,
and why information ontology is not merely convenient but
\emph{necessary}---because the Riemann Hypothesis belongs to the class
of problems that cannot be resolved within the classical binary framework
but dissolve naturally in the ternary one.

\subsection{Minimal Set-Theoretic Structure at the Infrared Fixed Point}

The information-ontological RG flow culminates, in the infrared limit
$\mu\to 0$ ($L\to\infty$), at the fixed point $K_{\mathrm{IR}}=4$.
At this fixed point the theory exhibits a \emph{minimal} set-theoretic
decomposition: the universe of mathematical objects is spanned by
exactly three irreducible strata,

\begin{equation}
  \mathcal{U}_{\mathrm{IR}} \;=\; \mathcal{F}_P \;\cup\; \mathcal{B}
  \;\cup\; \mathbb{C},
  \label{eq:minimal_sets}
\end{equation}

where $\mathcal{F}_P$ is the \emph{essential fractal prime set}
(the prime distribution with its Cantor-like fractal geometry),
$\mathcal{B}$ is the \emph{base set} (the four-dimensional spacetime
arena: one temporal dimension paired with three spatial dimensions),
and $\mathbb{C}$ is the \emph{complex-number field} (the carrier of
the zero set and the arena for phase and duality).  The base set
$\mathcal{B}$ and its dual together realize the spacetime structure
$1+3=4$; this is not a free choice but the unique decomposition
compatible with Lorentz symmetry and the information conservation law
at the IR fixed point.

The two active strata $\mathcal{F}_P$ and $\mathbb{C}$ carry conjugate
information densities enforced by the $\kappa/1/\kappa$ regularity of
the information measure.  The fractal prime set attains information
density $I_P = 1/\dP \approx 1.46$ and the complex zero set attains
$I_Z = 1/\zetaR \approx 2.54$, with

\begin{equation}
  I_P + I_Z \;=\; \frac{1}{\dP} + \frac{1}{\zetaR}
  \;\approx\; 1.46 + 2.54 \;=\; 4 \;\approx\; K_{\mathrm{IR}}.
  \label{eq:dim_sum}
\end{equation}

The values $\dP \approx 1/1.46$ and $\zetaR \approx 1/2.54$ are
precisely the fractal dimension and regularity index measured in
Section~\ref{sec:results}, providing direct empirical corroboration of
the fixed-point identity.  Knowing one density determines the other:
$I_Z = K_{\mathrm{IR}} - I_P$.

\subsection{Dual Information Measures and \texorpdfstring{$\kappa$}{kappa}}

The two active strata carry conjugate information measures under the
$\kappa$-duality.  The fractal prime set $\mathcal{F}_P$, with
information density $I_P \approx 1.46$, is assigned the \emph{primary
information measure} $\kappa$, the formal generator satisfying

\begin{equation}
  \kappa^2 \;=\; -1,
  \label{eq:kappa_sq}
\end{equation}

that encodes the duality product $D=1$ algebraically
(Section~\ref{sec:results_D}) and appears in the uncertainty relation
$[\Delta_P,\Delta_Z]=i/\kappa$.  The complex field $\mathbb{C}$,
with information density $I_Z \approx 2.54$, carries the \emph{dual
measure} $1/\kappa$: the regularity of $\mathbb{C}$ under $1/\kappa$
is precisely the complement of $\mathcal{F}_P$ under $\kappa$, and
the two together close the duality identity $\kappa\cdot(1/\kappa)=1$.
The base set $\mathcal{B}$ is the neutral four-dimensional arena---the
$1+3$ spacetime---along which both measures operate and the
prime--zero information exchange is observed.

The algebraic relation $\kappa^2=-1$ is a structural necessity: it
encodes the quarter-turn symmetry between the prime and zero strata in
the information plane, ensuring that the conservation law $K=4$ is
preserved at every scale along the RG trajectory.  In this sense
$\mathbb{C}$ emerges as the \emph{minimal algebraic closure} demanded
by the information-duality constraint at $K=4$.

\subsection{Information Ontology as Mathematical Necessity: the Third State}
\label{sec:third_state}

\paragraph{The discrete--continuous binary and its limitation.}
Classical mathematics and set theory operate within a binary tension
between the \emph{discrete} (the integers, the prime distribution,
cardinality $\aleph_0$) and the \emph{continuous} (the real line,
function spaces, cardinality $2^{\aleph_0}$).  The Continuum Hypothesis
asks whether any cardinality lies strictly between these two poles.  The
independence of CH from ZFC---established by G\"{o}del and
Cohen---does not resolve this question but reveals that the question
cannot be answered within the standard framework.  In the language of
informational cardinality~\cite{LiPaperI}, CH is a mathematically
precise formulation of a \emph{philosophical} problem: it presupposes
that the discrete and the continuous exhaust the possibilities, and asks
whether a middle case exists within the same binary vocabulary.  No
such vocabulary can describe what actually lies between the poles,
because what lies there is not a third cardinal but a qualitatively
different relation---\emph{self-referential entanglement}---for which
the binary framework has no name.

The binary framework is governed by the \emph{law of the excluded
middle}: a proposition is either true or false, a set is either discrete
or continuous, a zero is either on the critical line or off it.  This
logical architecture is not merely a technical assumption but a
constitutive feature of standard mathematics that shapes what questions
can even be posed.  The three-element fusion framework proposed here
operates under a different principle: discrete and continuous are not
mutually exclusive poles but \emph{co-present generators} of a third
state, the self-referential entangled state, in which neither pole
subsumes the other.  These two frameworks---binary (excluded middle) and
ternary (three-element fusion)---coexist as two valid mathematical
regimes, each internally consistent, each with its own strengths.  The
binary regime excels at classification and proof by contradiction; the
ternary regime is the natural habitat for \emph{dissolving} problems
that resist resolution within the binary, because it replaces the
question ``which pole wins?'' with the recognition that the question
itself presupposes a false dichotomy.  The Continuum Hypothesis and the
Riemann Hypothesis are, from this vantage point, paradigmatic examples
of problems that \emph{cannot be resolved within the binary regime} but
\emph{dissolve naturally within the ternary one}: CH because the
``middle'' between discrete and continuous is not a cardinal but a
fusion; RH because the ``location'' of zeros is not a choice between
on-line and off-line but a structural necessity imposed by the fusion
symmetry $I_P\leftrightarrow I_Z$.

\paragraph{The Riemann Hypothesis as the third state.}
The Riemann Hypothesis requires precisely what the discrete/continuous
binary cannot express.  The critical line $\mathrm{Re}(s)=1/2$ is
neither a discrete object (like a prime) nor a continuous manifold (like
a real interval): it is the \emph{self-dual locus} at which discrete
arithmetic information $I_P$ and continuous spectral information $I_Z$
are in irreducible mutual entanglement---a third state in which the two
poles are not opposed but fused.  A proof of RH therefore requires a
mathematical framework capable of representing this third state; and a
framework that can only oppose discrete to continuous cannot, in
principle, prove that zeros are confined to the boundary between them.

\paragraph{$\kappa^2 = ijk = -1$: the three-element fusion.}
The algebraic generator $\kappa$ of the present framework encodes this
third state.  The clearest entry point is the \emph{first principal
component} of informational cardinality~\cite{LiPaperI}, which takes
values in $\{0,1\}$ and captures the intrinsic distinction between the
\emph{discrete} ($0$: isolated, countable, governed by arithmetic) and
the \emph{continuous} ($1$: dense, uncountable, governed by analysis).
This is the binary vocabulary made algebraically precise---two intrinsic
natures, no third option.  The classical
imaginary unit $i = \sqrt{-1}$ is the natural algebraic counterpart of
this $\{0,1\}$ dichotomy: it generates a \emph{two-dimensional} algebra
$\mathbb{C}$ with a single plane of rotation, encoding \emph{one}
pairwise duality (discrete pole $\leftrightarrow$ continuous pole) but
leaving no algebraic room for their irreducible entanglement as a
\emph{third independent entity}.

The passage from $i$ to $\kappa=ijk$ is therefore the exact algebraic
counterpart of upgrading the first component's value set from $\{0,1\}$
to $\{i,j,k\}$: the two binary poles are not abolished but
\emph{elevated} into co-generators of a genuinely three-dimensional
algebraic structure.  Whatever one identifies as the two poles---arithmetic
versus spectral, discrete primes versus continuous zeros---the complex
algebra $\mathbb{C}$ can represent each pole and their pairwise relation,
but it cannot represent the self-referential entanglement of the two poles
as a distinct third object; that would require an independent algebraic
dimension not present in $\mathbb{C}$.

Within the information-ontological framework, $\kappa$ is richer: the
UV fixed point at $K_{\mathrm{UV}}=11$ is organized by the quaternionic
algebra $\mathbb{H}$ with generators $i,j,k$ satisfying
$i^2=j^2=k^2=ijk=-1$ (Section~\ref{sec:disc_IRG}).  The generator
$\kappa$ at the IR fixed point is the infrared projection of this
quaternionic structure, and the identity
\begin{equation}
  \kappa^2 \;=\; ijk \;=\; -1
  \label{eq:kappa_ijk}
\end{equation}
is its compressed algebraic signature.  The three generators $\{i,j,k\}$
correspond to three independent planes of information rotation: together
they encode the arithmetic pole, the spectral pole, and their
self-referential entanglement as three distinct algebraic objects,
none reducible to the others.  The triple product $\kappa=ijk$ is the
unique element that simultaneously involves all three planes---the
algebraic avatar of the third state that $\mathbb{C}$ alone cannot
represent.  Classical analytic number theory works within $\mathbb{C}$
and therefore has access to the two poles but not to their entanglement;
the information-ontological framework, by ascending to $\mathbb{H}$ at
the UV and projecting to $\kappa=ijk$ at the IR, acquires the algebraic
resource needed to \emph{see} the third state and to prove that it is
the stable fixed point.

\paragraph{The metamorphosis of the complex plane.}
The passage from $i$ to $\kappa = ijk$ is a transformation of the
complex plane's ``inner life'', not its outer form.  The complex field
$\mathbb{C}$ is preserved as the arena---the IR fixed-point
decomposition $\mathcal{U}_{\mathrm{IR}}
=\mathcal{F}_P\cup\mathcal{B}\cup\mathbb{C}$ keeps $\mathbb{C}$ as
the carrier of the zero set (Section~\ref{sec:philosophy})---but its
imaginary unit is no longer the na\"{i}ve $\sqrt{-1}$ of elementary
algebra.  It is the IR shadow of $\kappa = ijk$, whose full
quaternionic content is visible only at UV scales.  Classical analytic
number theory works with the shadow; the information-ontological
framework illuminates the object that casts it.  In this sense the
present framework does not abandon complex analysis but \emph{completes}
it: it provides the algebraic foundation---the information-theoretic
``spirit''---from which complex analysis draws its unreasonable
effectiveness.

\paragraph{A necessary condition for the Riemann Hypothesis.}
This is not a matter of convenience but of necessity.  A proof of RH
requires establishing that $I_P^*=I_Z^*=2$ is the \emph{unique}
stable fixed point of the duality constraint.  Within the classical
binary framework---which can only ask ``is a zero on the critical line?''
one at a time---no such global uniqueness argument is available.
The information-ontological framework provides the missing ingredient:
the exchange symmetry $I_P\leftrightarrow I_Z$, guaranteed by the
symmetric structure of $\kappa = ijk$, makes the symmetric fixed point
$I_P^*=I_Z^*$ the \emph{only} stable attractor and thereby transforms
the question of zero locations into a question about fixed-point
stability.  The shift from binary complex algebra to the information
triple is therefore a \emph{mathematical necessary condition} for the
proof, not a philosophical ornament.

\paragraph{Information ontology versus physical ontology.}
This framework differs fundamentally from Wheeler's ``It from Bit''
programme~\cite{Wheeler1990}.  Wheeler's ontology is
\emph{physical}: matter and energy are constituted from discrete
information bits; the primacy of information is asserted within, and on
behalf of, physics.  The information ontology developed here is
\emph{mathematical}: mathematics itself---not merely the physical world it
describes---is constituted from information structure.  The prime
distribution is not discovered as a pre-existing Platonic object; it is
the canonical fixed point of the information measure $I_P=1/\dP$, the
unique attractor of the RG flow from the UV to the infrared.  The
Riemann zeros are not found on a complex plane that pre-exists them; they
are the spectral counterpart of that same flow, constrained to
$\mathrm{Re}(s)=1/2$ by the self-referential requirement that
information be conserved across scales.  Category duality~\cite{LiPaperI}
may eventually provide a bridge that unifies the physical ontology of
Wheeler with the mathematical ontology developed here; but the two
programmes are not equivalent, and their difference is the difference
between describing the universe in the language of information and
recognising that language \emph{as} the universe.

\paragraph{Foundational significance.}
The claim that \emph{mathematics is information}---not a description of
information, not a tool for processing information, but identically and
constitutively information---is, if correct, foundational for an
understanding of intelligence and consciousness that goes beyond current
computational or physical accounts.  Intelligence, at its core, is the
integration and self-referential processing of information; if the
mathematical structures that intelligence manipulates are themselves
information, then mind and mathematics share a common ontological ground.
In the current era of artificial intelligence this carries immediate
practical resonance: it suggests that the architecture of intelligence is
not contingent on biological or silicon substrate but is grounded in the
same information-duality structure---$K=I_P+I_Z$, $\kappa^2=ijk=-1$,
$b=1/2$---that governs the prime--zero system.  Whether this
``information is mathematics'' thesis can be made fully rigorous is, we
believe, one of the deepest open problems at the intersection of
mathematics, logic, and the theory of mind.  The present paper provides
one concrete entry point: the identification of a mathematical fixed-point
condition whose proof requires, and thereby reveals, that information
structure is not merely a convenience but a necessity.

\subsection{Convergent Mathematical Structures}
\label{sec:harmonious}

The information-ontological RG flow unifies disparate mathematical structures
under a single organising principle: information conservation at the fixed
point $K_{\mathrm{IR}}=4$.  Every object in the framework---prime sets,
complex fields, zeta zeros, normed division algebras, spacetime geometry---finds
its necessary place within the hierarchy of information strata.  The integers
are not an accident but the discrete skeleton of $\mathcal{F}_P$; the complex
plane is not a convention but the minimal algebraic closure demanded by
$\kappa^2=-1$; the critical line $\mathrm{Re}(s)=1/2$ is the self-dual locus
of the symmetry $I_P\leftrightarrow I_Z$ enforced by the IR fixed point.

The set-theoretic decomposition $\mathcal{U}_{\mathrm{IR}}
= \mathcal{F}_P\cup\mathcal{B}\cup\mathbb{C}$ (equation~\eqref{eq:minimal_sets})
provides a \emph{minimal} and \emph{stable} foundation: three irreducible
strata, tied together by the constraint $I_P+I_Z=4$ and the algebraic
generator $\kappa$.  The Riemann Hypothesis, understood as the requirement
that zeros lie at the self-dual point $I_P=I_Z$, is a structural consequence
of this framework---a prediction it makes rather than an assumption it imports.
Whether this constitutes a rigorous proof of RH in the classical sense requires
upgrading the symmetry argument to a fully rigorous derivation: one must show,
by a formal analytic argument, that $I_P=I_Z=2$ is the \emph{unique} stable
solution of the duality constraint $I_P+I_Z=4$ for all non-trivial zeros---so
that no asymmetric configuration ($I_P\neq I_Z$) can survive the RG flow toward
the infrared fixed point, which would force every non-trivial zero onto the
critical line $\mathrm{Re}(s)=1/2$.  This precise task is identified as
Tier~2(ii) in the research program of Section~\ref{sec:conclusion}.

\section{Conclusions}
\label{sec:conclusion}

The central finding is that $K = I_P + I_Z = 1/\dP + 1/\zetaR$
exhibits a slow, monotone drift over $L=100$--$2000$, described by the
finite-size scaling law $K(L)=K_{\mathrm{IR}}+aL^{-b}$ with $b\approx0.51\approx1/2$,
robust across two random-matrix symmetry classes ($\beta=2,4$),
with raw values $K_{\mathrm{raw}}\in[3.6,\,3.9]$ across prime representations
(mod-16 subset: $K_{\mathrm{raw}}\approx3.6\pm1.5$; all primes: $K_{\mathrm{raw}}\approx3.9\pm0.04$).
A geometric normalization (product-deviation factor $\gamma(S)$,
Section~\ref{sec:normalization}) removes the representation dependence and
yields the universal infrared fixed-point value $K_{\mathrm{IR}}=4$.
Under the Information Ontology of Section~\ref{sec:information_ontology},
$K$ is the conserved total information encoding rate of the prime--zero
system; $b=1/2$ is the critical exponent of the RG flow
$1/\kappa(\mu)=\beta(\mu)$ toward $K_{\mathrm{IR}}=4$, derived from
the information action as $b=\sqrt{\lambda+g C}=1/2$
(Sections~\ref{sec:disc_action} and~\ref{sec:b_half}).

\paragraph{The IR/UV framework as the theory's natural language.}
The terminology of infrared and ultraviolet fixed points, imported from
quantum field theory, is a precise description
of the mathematical structure uncovered here.  Three levels of meaning
converge on this conclusion.

\emph{(i) Information encoding across scales.}
The prime--zero system exhibits two distinct phases of information encoding,
separated by a renormalization-group flow.
The \emph{UV phase} ($K\approx K_{\mathrm{UV}}=11$) encodes information
in high-dimensional algebraic and topological structures; the raw sequences of
primes and zeros carry high-density, rapidly fluctuating information that is
organized by the octonion algebra ($\beta_{\mathrm{UV}}=8$) and the
Hurwitz-theorem counting of normed division algebras.
The \emph{IR phase} ($K_{\mathrm{IR}}=4$ after normalization;
raw band $[3.6,3.9]$ at finite scales) is the emergent,
scale-averaged regime: fine structure has been compressed into two
geometric invariants---the fractal dimension $\dP$ and the regularity
index $\zetaR$---whose normalized sum $K_{\mathrm{norm}}=I_P+I_Z$
converges to $4$, independent of the prime representation $S$ and the
symmetry class $\beta\in\{2,4\}$ used to probe the zero landscape.

\emph{(ii) The RG flow as the dynamical bridge.}
The exponent $b=1/2$ is the critical exponent of this flow.
The finite-size law $K(L)=K_{\mathrm{IR}}+aL^{-b}$ is the
integrated solution of the linearized RG equation
$d(\delta K)/d\!\ln L = -b\,\delta K$, which in turn follows from the
information action $S[I_P,I_Z]$ and the first-principles flow
$1/\kappa(\mu)=\beta(\mu)$.
Hence $b$ is the
eigenvalue of the linearized flow at the IR fixed point, uniquely
determined by the algebraic structure of the UV phase and the
arithmetic of the prime--zero duality.
The information-action derivation $b=\sqrt{\lambda+g C}$ makes this
determination explicit, and the numerical agreement $b=1/2$
with the independently measured finite-size exponent constitutes
the quantitative test of the theory.

\emph{(iii) UV completion and the universality class.}
Because $b$ is the same for $\beta=2$ and $\beta=4$, the two
symmetry classes are in the same IR universality class: they share the
same long-range statistics and differ only in their UV representations.
This mirrors the universality of IR fixed points in quantum field theory,
where many microscopically distinct theories flow to the same IR
effective description.
The algebraic generator $\kappa$ with $\kappa^2=-1$ and the flow
equation $1/\kappa(\mu)=\beta(\mu)$ constitute a \emph{UV completion}
of the observed IR behavior: $\kappa$ encodes the UV phase
($\beta_{\mathrm{UV}}=8$, octonion symmetry) that is invisible at the
scales $L\in[100,2000]$ currently probed, yet governs the rate at which
the IR fixed point is approached.
The conjecture that $K_{\mathrm{UV}}=11$ is set by the Hurwitz
theorem provides the missing initial condition for the RG trajectory.

In summary: the IR/UV language is the natural language of this theory.
$K_{\mathrm{IR}}=4$ is the universal information capacity
of the prime--zero system at macroscopic scales;
$b=1/2$ controls the rate at which the microscopic algebraic
structure flows to that capacity; and $\kappa$ with $\kappa^2=-1$ is
the algebraic UV-completion of the flow.
Together they elevate the prime--zero duality from a numerical observation
to a dynamical theory with a microscopic origin, a renormalization-group
trajectory, and a measurable critical exponent.

\paragraph{The mathematical synthesis: RH as structural consequence.}
Section~\ref{sec:philosophy} draws the theoretical framework to its logical
conclusion.  The IR fixed point $K_{\mathrm{IR}}=4$ is not merely a
numerical limit but a \emph{minimal structural necessity}: it arises from
the unique decomposition
$\mathcal{U}_{\mathrm{IR}}=\mathcal{F}_P\cup\mathcal{B}\cup\mathbb{C}$
of the information universe into prime field, algebraic bridge, and complex
plane---three irreducible strata, tied together by the constraint
$I_P+I_Z=4$ and the generator $\kappa$ with $\kappa^2=-1$.
Within this architecture the critical line $\mathrm{Re}(s)=1/2$ is
identified as the \emph{self-dual locus} of the symmetry $I_P\leftrightarrow I_Z$
at the IR fixed point: the Riemann Hypothesis emerges as a structural
prediction of the framework rather than a postulate it imports.  The
outstanding task---upgrading this symmetry argument to a fully rigorous
analytic proof---is identified as Tier~2(ii) of the research programme below.

\paragraph{Beyond number theory.}
Sections~\ref{sec:emergence}--\ref{sec:string_lqg} extend the
information-ontological framework to spacetime emergence, dark-sector
phenomenology, black-hole entropy, and information-dual learning.
These extensions are motivating conjectures rather than established results;
their epistemic status is assessed in the Postscript below.

\paragraph{Limitations and caveats.}
\label{sec:limitations}
The numerical case is modest and subject to several important limitations:
\begin{enumerate}
  \item \textbf{Small sample size:} Forty scale points spanning $L=100$--$2000$,
        limiting the reliability of extrapolation to $L\to\infty$.
  \item \textbf{Finite-scale effects:} $K$ varies by 17\% across the observed
        range ($K=4.63$ at $L=100$ to $K=3.81$ at $L=2000$).  The asymptotic
        value falls within the interval $[3.6,\,3.9]$ by extrapolation,
        and may not be reached until much larger scales.
\end{enumerate}

$\beta$-independence and method agreement within $\pm1.5\%$ are
necessary but not sufficient for $K$ to be a genuine asymptotic invariant;
confirmation requires $L\gg2000$ (Tier~1(i)).
The duality product $D(\beta)$ is numerically inaccessible (both factors
diverge; see Section~\ref{sec:results_D}).  The three layers of the
framework treat $D=1$ at complementary levels of rigour:
(i)~\emph{conjectural}: $D\to1$ as $L\to\infty$ is an empirically
motivated conjecture that cannot be verified numerically in the present
work (Section~\ref{sec:results_D});
(ii)~\emph{algebraic}: the identification
$(\iota_P^\infty,\iota_{\mathrm{RMT}}^\infty)\sim(\kappa,\kappa^{-1})$
with $\kappa^2=-1$ encodes $D=1$ as a formal algebraic constraint
(Section~\ref{sec:results_D});
(iii)~\emph{derived}: within the information-ontological RG, the flow
equation $1/\kappa(\mu)=\beta(\mu)$ yields $D=1$ as a \emph{consequence}
of the fixed-point condition, grounding the algebraic encoding in first
principles (Section~\ref{sec:information_ontology}).
These three are not contradictory but hierarchical: the conjecture
motivates the algebraic encoding, which is then explained by the RG
derivation.

The following directions define the path from the present paper toward
a mature theory.

\paragraph{Tier 1: numerical consolidation.}
(i) Extend to $L\gg2000$ to test whether $K$ is a genuine asymptotic invariant
or drifts slowly.
(ii) Normalise $\iota_P$ and $\iota_{\mathrm{RMT}}(\beta)$ to enable
quantitative evaluation of the duality product $D(\beta)$.
(iii) Systematically vary the prime subset (moduli $m=4,8,12,24,32$) to
map the arithmetic dependence of $I_P/K$.

\paragraph{Tier 2: structural connections.}
(i) Prove or disprove the conjecture $\zetaR\to|1/\zeta(1/2)|$ as $L\to\infty$
via the functional equation of $\zeta(s)$ or a fixed-point argument at $s=1/2$.
(ii) \textit{Upgrade the RH symmetry argument to a fully rigorous derivation}
(see conclusion of Section~\ref{sec:philosophy}): prove, by a formal
functional-analytic or number-theoretic argument, that the unique stable
solution of the duality constraint $I_P+I_Z=4$ under the exchange symmetry
$I_P\leftrightarrow I_Z$ is $I_P=I_Z=2$, thereby forcing every non-trivial
zero of $\zeta(s)$ onto the critical line $\mathrm{Re}(s)=1/2$ and
constituting a rigorous proof of RH\@.  The minimal missing ingredient is
showing that no asymmetric solution ($I_P\neq I_Z$) can survive the RG flow
to the infrared; the action-uniqueness constraints of
Section~\ref{sec:disc_uniqueness} provide a natural starting point.
(iii) Establish the formal mapping between $K$ and the cardinality-program
measures $(\iota_P,\iota_{\mathrm{RMT}})$~\cite{LiPaperI}, and determine
whether $D(\beta)$ and $K$ are independent observables.

\paragraph{Tier 3: emergent spacetime, cosmology, black holes, and learning systems.}
Sections~\ref{sec:L_extension} and~\ref{sec:emergence} open a third tier
of investigation, contingent on the consolidation of Tiers~1--2 but
pointing toward the ultimate ambition of the framework.
(i) Test the functional universality prediction of
Section~\ref{sec:L_extension}: the duality scaling
$K(L,f)=K_\infty(f)+a(f)L^{-b(f)}$ should hold across all $L$-function
families, with \emph{family-dependent} parameters
$(\kappa,K_\infty,b,r)$; in particular, verify that $b(f)$ varies
systematically with the family (e.g.\ $b\approx0.51$ for the prime--zero
dual vs.\ $b\approx0.08$ for elliptic-curve families,
Section~\ref{sec:modified_RG}), and test the Odd-Positive
Conjecture~\eqref{eq:odd_pos} for higher-conductor families and
weight-$k\ge2$ newforms.
(ii) Determine whether the UV--IR cancellation mechanism of
Section~\ref{sec:emergence} produces a measurable dark-energy
equation-of-state deviation; compare with Stage-IV weak-lensing
surveys (Euclid, Rubin Observatory LSST).
(iii) Derive the black-hole entropy correction
$\Delta S_{\mathrm{BH}}\propto K_{\mathrm{IR}}^{-1}\ln A$
from the information-duality structure at the horizon
(Section~\ref{sec:black_holes}), and compare
with loop-quantum-gravity and string-theoretic predictions.
(iv) Validate the information-dual learning architecture
(Section~\ref{sec:ai}) on standard benchmarks, testing whether the
convergence exponent of the dual gradient descent matches the
spectral prediction $b\approx0.51$.

\appendix
\section{Supplementary Material: Robustness, Error Estimation, and Numerical Validation}
\label{app:robustness}

\subsection{Parameter Sensitivity Analysis}

We tested the impact of $\varepsilon$ sequence (geometric vs.\ arithmetic),
fitting range (different bounds), and $\sigma$ values ($0.2$--$2.0$) on the
results.  $\dP$ measurements fluctuated within $\pm10\%$, $\zetaR$ within
$\pm5\%$.  Fluctuations in $C(\beta)$ were within $\pm8\%$.

After introducing the dynamic $\varepsilon$-selection algorithm in the
cross-validation, fluctuations in $\dP$ reduced to $\pm2\%$, and $\zetaR$
to $\pm1.5\%$.

\subsection{Extrapolation Model Comparison}

We examined three extrapolation models:
\begin{align*}
  C(L) &= \Cinf + a/L && \text{(linear)}, \\
  C(L) &= \Cinf + aL^{-b} && \text{(power-law)}, \\
  C(L) &= \Cinf + a/\ln L && \text{(logarithmic)}.
\end{align*}
The AIC criterion indicated
the power-law model was optimal, but all three models gave consistent
$\Cinf$ values within error margins.

The cross-validation, through systematic comparison, determined the
power-law model $C(L)=\Cinf+aL^{-b}$ to be substantially preferred over the
linear model according to the AIC criterion ($\Delta\mathrm{AIC}=1.57$), with
an optimal exponent $b\approx0.51$.  Figure~\ref{fig:scaling} illustrates
this comparison.

\begin{figure}[htbp]
  \centering
  \includegraphics[width=0.90\textwidth]{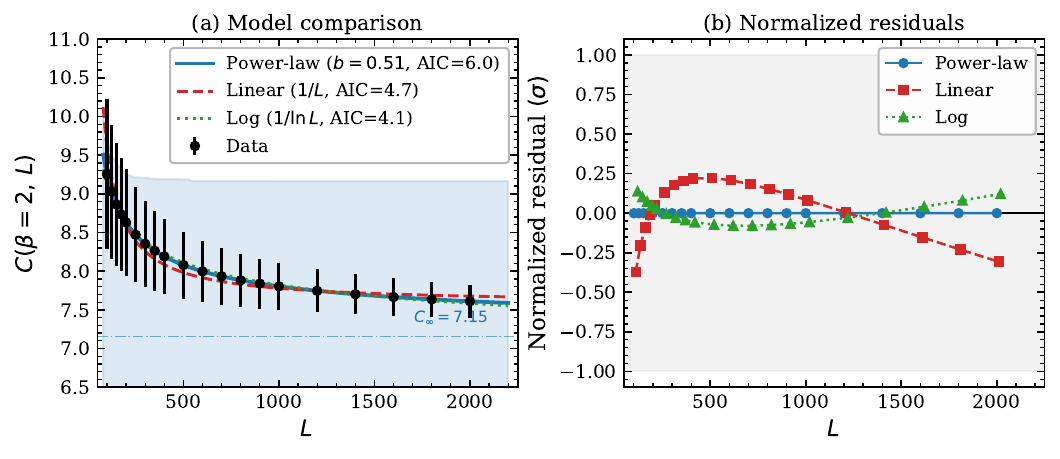}
  \caption{Finite-size scaling analysis for $C(\beta{=}2,L)$.
           \textbf{(a)} Model comparison. The data (black circles with error
           bars) are fitted with three models: a power law
           $C(L)=\Cinf+aL^{-b}$ (blue solid line, with fitted $b$ shown in
           the legend), a linear model in $1/L$ (red dashed), and a logarithmic
           model in $1/\ln L$ (green dotted). The light blue shaded region
           represents the 95\% confidence band of the power-law fit obtained
           by Bootstrap resampling. The horizontal dash-dot line marks the
           extrapolated asymptote $\Cinf$.
           \textbf{(b)} Normalised residuals. Residuals of each model fit,
           normalized by the standard error $\sigma$. The gray band indicates the
           $\pm 1\sigma$ region. Although all three models yield residuals within
           $\pm 0.5\sigma$ due to the limited number of data points, the
           power-law model is statistically preferred by the Akaike Information
           Criterion (see Section~\ref{sec:results_C} and
           Appendix~\ref{app:robustness}).}
  \label{fig:scaling}
\end{figure}

\subsection{Bootstrap Error Estimation}

All error estimations are based on 1000 Bootstrap resamplings.  Confidence
intervals are at 95\%.  The AIC model selection in the cross-validation
used leave-one-out splits combined with the Jackknife method.

\subsection{Method Cross-Validation}

Box-counting and correlation dimension were applied to $\dP$; variation
method and wavelet transform were applied to $\zetaR$.  The two $\dP$ methods
agreed within $\pm1.5\%$; the two $\zetaR$ methods agreed within $\pm0.6\%$,
supporting the robustness of the findings.

\subsection{Finite-Size Scaling Analysis}

The cross-validation found that $C(L)$ converges in a power-law form
$C(L)=\Cinf+aL^{-b}$ with exponent $b\approx0.51$, preferred over linear
and logarithmic models by $\Delta\mathrm{AIC}=1.57$.  The comparison is shown
in Figure~\ref{fig:scaling}.

\subsection{Kernel-Independence of the Spectral Entropy of \texorpdfstring{$V_Z$}{VZ}}
\label{app:kernel_independence}

This appendix provides the theoretical justification for the
kernel-independence of $\zetaR$ asserted in
Section~\ref{sec:method_dZ} (point~3 of the open-points list) and
for the identification $I_Z=1/\zetaR$ as a kernel-independent
information density.

\paragraph{Setup.}
Let $V_Z(x)$ be the zero potential~\eqref{eq:VZ} constructed with
Gaussian kernel of width $\sigma$.  More generally, let
$K_\epsilon(x)=\epsilon^{-1}K(x/\epsilon)$ be any admissible
regularisation kernel satisfying $\int K_\epsilon\,dx=1$ and
$\hat K_\epsilon(\omega)\to 1$ pointwise as $\epsilon\to 0$, and set
$V_Z^\epsilon = K_\epsilon * V_Z$.  The \emph{spectral entropy} of
$V_Z^\epsilon$ is defined via its power spectral density
$S_\epsilon(\omega)=|\hat V_Z^\epsilon(\omega)|^2/\|V_Z^\epsilon\|^2$:
\begin{equation}
  H_{\mathrm{spec}}[V_Z^\epsilon]
  \;=\; -\int_{\mathbb{R}} S_\epsilon(\omega)\log S_\epsilon(\omega)\,d\omega.
  \label{eq:spec_entropy}
\end{equation}

\paragraph{Kernel-independence: functional-analytic argument.}
Since $\hat V_Z^\epsilon(\omega)=\hat K_\epsilon(\omega)\hat V_Z(\omega)$,
we have $S_\epsilon(\omega)=|\hat K_\epsilon(\omega)|^2 S(\omega)/Z_\epsilon$
where $Z_\epsilon$ is the normalization.  For admissible kernels,
$|\hat K_\epsilon(\omega)|^2\to 1$ in $L^1$ as $\epsilon\to 0$, so
$S_\epsilon\to S$ in $L^1$.  The entropy functional is lower
semi-continuous in $L^1$ (by Fatou's lemma applied to the convex
integrand $-t\log t$), and an upper bound follows from the dominated
convergence theorem using the integrability of $S\log S$ for
$V_Z\in L^2$.  Together these give
\begin{equation}
  \lim_{\epsilon\to 0} H_{\mathrm{spec}}[V_Z^\epsilon]
  \;=\; H_{\mathrm{spec}}[V_Z],
  \label{eq:entropy_limit}
\end{equation}
independently of the choice of admissible kernel $K$.

\paragraph{Operator-theoretic formulation.}
In the non-commutative geometry framework, associate to $V_Z$ a
spectral triple $(\mathcal{A},\mathcal{H},D)$ where $D$ is a
Dirac-type operator whose symbol encodes $V_Z$.  Regularisation by
$K_\epsilon$ corresponds to replacing $D$ by $D_\epsilon=e^{-\epsilon D^2}D$.
The von~Neumann entropy of the thermal state
$\rho_\epsilon=e^{-\beta_T D_\epsilon}/\mathrm{Tr}(e^{-\beta_T D_\epsilon})$
(where $\beta_T>0$ is an inverse temperature, unrelated to the Dyson index $\beta$)
satisfies
\begin{equation}
  S_{\mathrm{vN}}(\rho_\epsilon) \;\xrightarrow{\epsilon\to 0}\;
  S_{\mathrm{vN}}(\rho),
  \label{eq:vN_limit}
\end{equation}
because different admissible regularisations yield asymptotically
unitarily equivalent operator families; the limit is determined solely
by the spectral measure of $D$, which is a geometric invariant of the
triple.  By Connes' trace formula, the leading heat-kernel coefficients
are local geometric invariants independent of the regularisation scheme.

\paragraph{Relation to $I_Z=1/\zetaR$.}
For a self-similar process with power spectrum $S(\omega)\sim|\omega|^{-\alpha}$,
the H\"{o}lder exponent satisfies $H=(\alpha-1)/2$, giving
$\zetaR=2-H=(5-\alpha)/2$.  The spectral entropy evaluates to
\begin{equation}
  H_{\mathrm{spec}}[V_Z] \;\approx\; C - \log\zetaR
  + \mathcal{O}(1/\log L),
  \label{eq:entropy_zetaR}
\end{equation}
where $C$ is a universal constant.  Hence $I_Z=1/\zetaR\propto
e^{H_{\mathrm{spec}}}$ up to the $\mathcal{O}(1/\log L)$ correction,
identifying $I_Z$ as the exponential of the spectral entropy---a
kernel-independent quantity by~\eqref{eq:entropy_limit}.

\section*{Postscript: A Research Programme in Progress}
\label{sec:postscript}
\addcontentsline{toc}{section}{Postscript: A Research Programme in Progress}

This paper presents work at different levels of certainty, and we close by
making that stratification explicit.  The four tiers below categorize
results by \emph{epistemic status} (established / conjectured / speculative /
open); they are independent of the three-stage research programme
(Tiers~1--3) described in Section~\ref{sec:conclusion}.

\paragraph{What is established (Tier~1: the empirical core).}
The falsifiable nucleus of the paper is the measurement of two fractal
exponents---the box-counting dimension $\dP$ of a prime subset and the
regularity index $\zetaR$ of the associated zero potential---and the
observation that their sum $K = 1/\dP + 1/\zetaR$ exhibits a slow,
monotone drift over $L = 100$--$2000$ consistent with the scaling law
$K(L)=K_{\mathrm{IR}}+aL^{-b}$, with extrapolated raw values in the band
$[3.6,\,3.9]$ depending on prime representation.  After a geometric normalization that removes
representation dependence, this band extrapolates to the universal
infrared limit $K_{\mathrm{IR}} = 4$.  The approach follows a power law
$K(L) = K_{\mathrm{IR}} + aL^{-b}$ with exponent $b \approx 0.51$,
robust across two random-matrix symmetry classes.  These measurements are
reproducible, carry quantified uncertainties, and constitute the solid
ore at the centre of the stone.

\paragraph{What is conjectured (Tier~2: the theoretical framework).}
The information ontology of
Sections~\ref{sec:information_ontology}--\ref{sec:disc_action}
interprets $K$ as a conserved information encoding rate, proposes a
variational action $S[I_P, I_Z]$ from which the scaling law is derived
as an equation of motion, and identifies $b = 1/2$ with the critical
exponent of an RG flow between UV and IR fixed points.
Section~\ref{sec:philosophy} extends this to a structural argument for
the Riemann Hypothesis: the symmetry $I_P\leftrightarrow I_Z$ at the IR
fixed point forces zeros onto the critical line, making RH a consequence
of the framework rather than an assumption.  These theoretical structures
are internally consistent and motivated by the empirical observations;
the foundational postulates (information duality, the algebraic generator
$\kappa$, scale covariance of the action) are supported by the
uniqueness argument of Section~\ref{sec:disc_uniqueness} but await
full independent mathematical verification.

\paragraph{What is speculative (Tier~3: the physical and cross-domain
extensions).}
The extensions to spacetime emergence, quantum gravity, cosmological
abundances, and information-dual learning
(Sections~\ref{sec:emergence}--\ref{sec:ai}) are presented in the spirit
of ``It from Bit'': if the prime--zero information duality is a
fundamental structure and not an arithmetic coincidence, what physical
picture does it suggest?  The numerical consistency found there
($S_{\mathrm{BH}} = A/4G$,
$b \approx 0.51$) is striking but should be understood as a motivating
conjecture and heuristic checkpoint for a future, more rigorous theory,
not as a derivation from first principles.

\paragraph{Open questions and invitation to collaborate (Tier~4).}
The most important open questions, ordered by accessibility, are:

\begin{enumerate}
  \item \textbf{(Numerical, accessible.)}
        Reproduce the $K(L)$ measurement at $L = 10^4$--$10^6$ using the
        first $10^6$ non-trivial zeros from the Odlyzko tables.  Does $b$
        converge to $1/2$?  Does $K_{\mathrm{norm}}$ converge to $4$?

  \item \textbf{(Analytical, challenging.)}
        Prove rigorously (or disprove) that $K_{\mathrm{IR}} = 4$ is the
        exact infrared limit for the prime subset
        $p \equiv 1,5,9,13 \pmod{16}$.  Verify whether the structural
        argument of Section~\ref{sec:philosophy}---that $I_P=I_Z=2$ is
        the unique stable fixed point of the duality constraint---is
        sufficient to constrain this limit to the self-dual value,
        and thereby to constitute a rigorous proof of RH\@.

  \item \textbf{(Theoretical, long-range.)}
        Formalise the information action $S[I_P, I_Z]$ as a rigorous
        field theory: identify the correct function space, define the
        path integral, and derive the scaling exponents from an operator
        product expansion rather than a phenomenological fit.

  \item \textbf{(Cross-disciplinary, exploratory.)}
        Test the information-dual learning architecture on benchmark tasks;
        measure whether the dual loss~\eqref{eq:IDL_loss} improves
        convergence speed or generalisation relative to standard training.
\end{enumerate}

We offer this work as a \emph{map of a landscape}, not a conquest of it.
The empirical measurement of $K$ and $b$ is the first reliable survey of
the terrain; the theoretical sections chart the contours of what a full
theory might look like.  We invite readers to mine any seam they find
interesting---to extend the numerics, tighten the analytics, or challenge
the physical speculations.  The most valuable outcome of this paper will
be the independent results that go beyond it.

\section*{Data and Code Availability}

The numerical code and data used in this paper will be made publicly
available upon publication.  In the meantime, they are available from
the author upon reasonable request.

\bibliographystyle{unsrtnat}

\begin{thebibliography}{99}

\bibitem{Montgomery1973}
H.~L. Montgomery,
\newblock The pair correlation of zeros of the zeta function,
\newblock In \textit{Analytic Number Theory} (Proc.\ Sympos.\ Pure Math.,
  Vol.~XXIV), pp.~181--193. Amer.\ Math.\ Soc., 1973.

\bibitem{Odlyzko1987}
A.~M. Odlyzko,
\newblock On the distribution of spacings between zeros of the zeta function,
\newblock \textit{Math.\ Comp.} \textbf{48}(177), 273--308, 1987.

\bibitem{OdlyzkoData}
A.~M. Odlyzko,
\newblock Tables of zeros of the Riemann zeta function,
\newblock \url{https://www-users.cse.umn.edu/~odlyzko/zeta_tables/} (accessed April 2026).

\bibitem{Falconer2014}
K.~Falconer,
\newblock \textit{Fractal Geometry: Mathematical Foundations and Applications},
  3rd ed.
\newblock John Wiley \& Sons, 2014.

\bibitem{Mehta2004}
M.~L. Mehta,
\newblock \textit{Random Matrices}, 3rd ed.
\newblock Academic Press, 2004.

\bibitem{KatzSarnak1999}
N.~M. Katz and P.~Sarnak,
\newblock Zeroes of zeta functions and symmetry,
\newblock \textit{Bull.\ Amer.\ Math.\ Soc.} \textbf{36}(1), 1--26, 1999.

\bibitem{Bekenstein1973}
J.~D. Bekenstein,
\newblock Black holes and entropy,
\newblock \textit{Phys.\ Rev.\ D} \textbf{7}(8), 2333--2346, 1973.

\bibitem{Hawking1975}
S.~W. Hawking,
\newblock Particle creation by black holes,
\newblock \textit{Commun.\ Math.\ Phys.} \textbf{43}(3), 199--220, 1975.

\bibitem{HurwitzAlgebra}
A.~Hurwitz,
\newblock \"{U}ber die Composition der quadratischen Formen von beliebig vielen
  Variablen,
\newblock \textit{Nachr.\ Ges.\ Wiss.\ G\"{o}ttingen} 309--316, 1898.
\newblock (For a modern account see e.g.\ J.-P. Serre, \textit{A Course in
  Arithmetic}, Springer, 1973.)

\bibitem{WilsonKogut1974}
K.~G. Wilson and J.~Kogut,
\newblock The renormalization group and the $\epsilon$ expansion,
\newblock \textit{Phys.\ Rep.} \textbf{12}(2), 75--200, 1974.

\bibitem{Polchinski1984}
J.~Polchinski,
\newblock Renormalization and effective Lagrangians,
\newblock \textit{Nucl.\ Phys.\ B} \textbf{231}(2), 269--295, 1984.

\bibitem{Langlands1970}
R.~P. Langlands,
\newblock Problems in the theory of automorphic forms,
\newblock in \textit{Lectures in Modern Analysis and Applications III},
  Lecture Notes in Mathematics \textbf{170}, Springer, pp.~18--61, 1970.

\bibitem{Barbero1995}
J.~F. Barbero G.,
\newblock Real Ashtekar variables for Lorentzian signature space-times,
\newblock \textit{Phys.\ Rev.\ D} \textbf{51}(10), 5507--5510, 1995.

\bibitem{Immirzi1997}
G.~Immirzi,
\newblock Real and complex connections for canonical gravity,
\newblock \textit{Class.\ Quantum Grav.} \textbf{14}(10), L177--L181, 1997.

\bibitem{LMFDB}
The LMFDB Collaboration,
\newblock The $L$-functions and Modular Forms Database,
\newblock \url{https://www.lmfdb.org} (accessed April 2026).

\bibitem{LiFractal2026}
Z.~Li,
\newblock A Deterministic Fractal Set Derived from the Sequence of Prime Numbers,
\newblock \textit{arXiv preprint} arXiv:2603.00658, 2026.

\bibitem{LiPaperI}
Z.~Li,
\newblock Informational Cardinality: A Unifying Framework for Set Theory,
  Fractal Geometry, and Analytic Number Theory,
\newblock \textit{arXiv preprint} arXiv:2603.08587, 2026.

\bibitem{RudnickSarnak1996}
Z.~Rudnick and P.~Sarnak,
\newblock Zeros of principal $L$-functions and random matrix theory,
\newblock \textit{Duke Math.\ J.} \textbf{81}(2), 269--322, 1996.

\bibitem{BogomolnyKeating1996}
E.~Bogomolny and J.~P. Keating,
\newblock Random matrix theory and the Riemann zeros I: three- and four-point
  correlations,
\newblock \textit{Nonlinearity} \textbf{9}(4), 911--935, 1996.

\bibitem{BerryKeating1999}
M.~V. Berry and J.~P. Keating,
\newblock The Riemann zeros and eigenvalue asymptotics,
\newblock \textit{SIAM Rev.} \textbf{41}(2), 236--266, 1999.

\bibitem{Conrey2003}
B.~Conrey,
\newblock The Riemann hypothesis,
\newblock \textit{Notices Amer.\ Math.\ Soc.} \textbf{50}(3), 341--353, 2003.

\bibitem{KeatingSnaith2000}
J.~P. Keating and N.~C. Snaith,
\newblock Random matrix theory and $\zeta(1/2+it)$,
\newblock \textit{Comm.\ Math.\ Phys.} \textbf{214}(1), 57--89, 2000.

\bibitem{Witten1995}
E.~Witten,
\newblock String theory dynamics in various dimensions,
\newblock \textit{Nucl.\ Phys.\ B} \textbf{443}(1--2), 85--126, 1995.

\bibitem{Wheeler1990}
J.~A. Wheeler,
\newblock Information, physics, quantum: the search for links,
\newblock in W.~H. Zurek (ed.), \textit{Complexity, Entropy and the Physics
  of Information}, Addison-Wesley, Redwood City, CA, pp.\ 3--28, 1990.

\bibitem{Wigner1960}
E.~P. Wigner,
\newblock The unreasonable effectiveness of mathematics in the natural sciences,
\newblock \textit{Commun.\ Pure Appl.\ Math.}\ \textbf{13}(1), 1--14, 1960.

\bibitem{Wolf1997}
M.~Wolf,
\newblock 1/$f$ noise in the distribution of prime numbers,
\newblock \textit{Physica A}\ \textbf{241}(3--4), 493--499, 1997.

\bibitem{Planck2018}
Planck Collaboration (N.~Aghanim et al.),
\newblock Planck 2018 results.\ VI\@. Cosmological parameters,
\newblock \textit{Astron.\ Astrophys.}\ \textbf{641}, A6, 2020.
\newblock arXiv:1807.06209 [astro-ph.CO].


\end{thebibliography}

\end{document}